\providecommand{\noopsort}[1]{}
\definecolor{immi}{rgb}{0,.6,.1}
\newbox\removebox
\newcommand\remove[2][blue]{%
\setbox\removebox=\ifmmode\hbox{$#2$}\else\hbox{#2}\fi%
\leavevmode
\rlap{\textcolor{#1}{\vrule height0.8ex depth-0.5ex width\wd\removebox}}%
\box\removebox
}
\long\def\bigremove#1{%
\par\setbox\removebox=\vbox{#1}%
\vbox{%
\vbox to0pt{\hbox{\tikz\draw[color=blue,thick] (0,0) -- (\wd\removebox,-\ht\removebox)  (\wd\removebox,0) -- (0,-\ht\removebox);}}
\box\removebox
}
}
\newcommand{\cCexp}{\cC^{\mathrm{exp}}}
\newcommand{\cCexps}{\cC^{\mathrm{exp}}_s}
\newcommand{\cCs}{\cC_s}
\newcommand{\cQexp}{\cQ^{\mathrm{exp}}}
\newcommand{\Vol}{\operatorname{Vol}}
\newcommand{\Locp}{{\mathrm{Loc}^{0}}}
\newcommand{\Int}{\operatorname{Int}}
\newcommand{\Bdd}{\operatorname{Bdd}}
\newcommand{\Iva}{\operatorname{Iva}}
\newcommand{\id}{\operatorname{id}}
\def\VF{\mathrm{VF}}
\def\Res{\RF}
\def\VG{\mathrm{VG}}
\def\VGinf{\VG_{\infty}}
\newcommand{\RF}{{\rm RF}}
\def\ac{{
{\rm ac}}}
\def\cross{{\overline{\rm cross}}}
\def\gLPas{\cL_{\rm gDP}}
\def\gTPas
\def\LPres{\cL_{\rm Pres}}
\def\TPres
\def\Tdoag{\rm DOAG}
\def\res{\operatorname{res}}
\def\rv{\operatorname{rv}}
\def\RV{\operatorname{RV}}
\def\11{{\mathbf 1}}
\def\AA{{\mathbb A}}
\def\CC{{\mathbb C}}
\def\FF{{\mathbb F}}
\def\LL{{\mathbb L}}
\def\NN{{\mathbb N}}
\def\QQ{{\mathbb Q}}
\def\RR{{\mathbb R}}
\def\ZZ{{\mathbb Z}}
\def\cC{{\mathscr C}}
\def\cD{{\mathcal D}}
\def\cF{{\mathcal F}}
\def\cK{{\mathcal K}}
\def\cL{{\mathcal L}}
\def\cM{{\mathcal M}}
\def\cO{{\mathcal O}}
\def\cQ{{\mathcal Q}}
\def\cT{{\mathcal T}}
\def\llb{\mathopen{[\![}}
\def\rrb{\mathopen{]\!]}}
\newtheorem{thm}[subsubsection]{Theorem}
\newtheorem{lem}[subsubsection]{Lemma}
\newtheorem{cor}[subsubsection]{Corollary}
\newtheorem{prop}[subsubsection]{Proposition}
\theoremstyle{definition}
\newtheorem{defn}[subsubsection]{Definition}
\newtheorem{def-prop}[subsubsection]{Proposition-Definition}
\newtheorem{def-theorem}[subsubsection]{Theorem-Definition}
\newtheorem{def-lem}[subsubsection]{Lemma-Definition}
\theoremstyle{remark}
\newtheorem{remark}[subsubsection]{Remark}
\theoremstyle{plain}
\numberwithin{equation}{subsection}
\DeclareMathOperator*{\sq}{\square}
\newcommand{\ord}{\operatorname{ord}}
\newcommand{\Jac}{\operatorname{Jac}}
\definecolor{immi's color}{rgb}{0,.6,.1}
\title[Integration, uniform in all local fields of characteristic zero]{Integration of functions of motivic exponential class, uniform in all non-archimedean local fields of characteristic zero}
\author[R.~Cluckers]{Raf Cluckers}
\address{Universit\'e de Lille,
Laboratoire Painlev\'e,
 CNRS - UMR 8524, Cit\'e Scientifique, 59655
Villeneuve d'Ascq Cedex, France, and,
KU Leuven, Department of Mathematics,
Celestijnenlaan 200B, B-3001 Leu\-ven, Bel\-gium\\}
\email{Raf.Cluckers@math.univ-lille1.fr}
\urladdr{http://rcluckers.perso.math.cnrs.fr/}
\author[I.~Halupczok]{Immanuel Halupczok}
\address{Lehrstuhl f\"ur Algebra und Zahlentheorie, Mathematisches Institut, Universit\"atsstr. 1, 40225 D\"usseldorf, Germany}
\email{math@karimmi.de}
\urladdr{http://www.immi.karimmi.de/en/}
\subjclass[2000]{Primary 14E18; Secondary 03C10, 11S80, 11Q25, 40J99}
\keywords{Motivic integration, motivic Fourier transforms, motivic exponential functions, $p$-adic integration, non-archimedean geometry, Denef-Pas cell decomposition, quantifier elimination, uniformity in all local fields}
\thanks{The authors would like to thank Emmanuel Kowalski and Fran\c cois Loeser for interesting discussions, and thank E.~Kowalski the Forschungsinstitut f\"ur Mathematik (FIM) at ETH Z\"urich for the hospitality and invitation to R.C.~to give the Nachdiplom Lectures at the ETH in 2014 related to the themes of the paper. The authors were supported by the European Research Council under the European Community's Seventh Framework Programme (FP7/2007-2013) with ERC Grant Agreement nr. 615722 
MOTMELSUM, by the Labex CEMPI  (ANR-11-LABX-0007-01); the author I.H.~was supported by the SFB~878 of the Deutsche Forschungsgemeinschaft.}
\begin{document}

\begin{abstract}
Through a cascade of generalizations, 
we develop a theory of motivic integration which works uniformly in all non-archimedean local fields of characteristic zero,
overcoming some of the difficulties related to ramification and small residue field characteristics.
We define a class of functions, called functions of motivic exponential class,
which we show to be stable under integration and under Fourier transformation,
extending results and definitions from \cite{CLexp}, \cite{CLbounded} and \cite{CGH}. We prove uniform results related to rationality and to various kinds of loci. A key ingredient is a refined form of Denef-Pas quantifier elimination which allows us to understand definable sets in the value group and in the valued field.
\end{abstract}

\maketitle

\section{Introduction}

\subsection{}


Much of the existing theory of local zeta functions and $p$-adic integrals has been developed for large residue field characteristic, and in the case of small residue field characteristic only with bounds on ramification. Sometimes these restrictions come from resolution of singularities with good reduction modulo (large) $p$ (see e.g.~\cite{DL}, \cite{Denefdegree} and Theorems (3.3) and (3.4) of \cite{DenefBour}), and sometimes they come from quantifier elimination and model theoretic results (see e.g.~\cite{Pas2}, \cite{CDo}, \cite{CLbounded}, \cite{CGH}, \cite{HK}). Sometimes however, arbitrary ramification and even positive characteristic local fields can be allowed, for example in situations with some smoothness or smooth models, see e.g.~\cite{LoeserSeb}, \cite{Sebag2004}, \cite{NicSeb}, \cite{NicaSeba}, and in situations where variants of Hironaka's resolution can be used over $\QQ$ like for Theorem E of \cite{AizDr} about wave front sets and for the rationality result from the 1970s by Igusa, see Theorem 8.2.1 of \cite{Igusa:intro} or Theorem (1.3.2) of \cite{DenefBour}.

\subsection{}
In this paper we remove some of the restrictions on the model theoretic approach by refining quantifier elimination results, and grasp the rewards
to the construction of a framework of integration which works uniformly in all non-archimedean local fields of characteristic zero, extending recent work from \cite{CLexp},
\cite{CLbounded} and \cite{CGH}. By a non-archimedean local field, local field for short, we mean a finite field extension of $\QQ_p$ for some prime $p$ or $\FF_q((t))$ for some prime power $q$.
For $K$ a local field with valuation ring $\cO_K$ with maximal ideal $\cM_K$, we do not obtain new results about $\cO_K$ modulo the ideals $n\cM_K := \{ nm\mid m\in \cM_K\}$ for integers $n>0$,
but rather, we use these finite quotients as tools (one might even say `oracles'), in order to understand the model theory of $K$ and the geometry of definable sets.
Let us note that the use of model theory to study $p$-adic integrals originated in work by Denef \cite{Denef} (enabled by Macintyre's quantifier elimination result \cite{Mac}), where the approach with resolution of singularities was used by Igusa in the early seventies (enabled by Hironaka's result \cite{Hir:Res}).

\subsection{}
The new framework thus removes the bounds on ramification degrees from \cite{CLbounded}, is stable under Fourier transformation as in \cite{CLexp}, and deals with uniformity in local fields of characteristic zero.
This yields several kinds of new uniformities for the behaviour of $p$-adic integrals and for bad (or exceptional) loci.
In the afore-mentioned Theorem 8.2.1 of \cite{Igusa:intro}, it is the set of candidate poles and the form of the denominator that is completely uniform over all local fields of characteristic zero; in Theorem E of \cite{AizDr} it is the wave front which is included in a Zariski closed set of controlled dimension which is completely uniform over all local fields of characteristic zero. These two phenomena should now find a common ground in the uniform treatment of this paper, see Sections \ref{sec:loci} and \ref{loc:zeta}.
For the sake of simplicity, we do not take an abstract motivic approach.

\subsection{}
An important step for treating arbitrary ramification via model theory was provided by S.~Basarab \cite{Basarab} and its quantifier elimationation result which can be reformulated in several ways, e.g.~with the generalized Denef-Pas language.

Key for us is a refinement of the classical Denef-Pas and Basarab quantifier elimination results: we eliminate both valued field and value group quantifiers, regardless of ramification, see Theorem \ref{QEZ}. This leads to a more subtle situation than in the cases with bounded ramification, and only a weak form of orthogonality survives.  A cascade of generalizations of results related to the geometry of definable sets and integration follows uniformly in all local fields of characteristic zero.



\subsection{}
Let us describe some examples of uniform behaviour. 
Recall that definable functions are field-independent descriptions of functions which generalize in particular polynomial mappings; see Section~\ref{s:gDP} for precise definitions.

\par


Let $n>0$ be an integer, and $f$ be a definable function from the $n$-th Cartesian power of the valuation ring and taking values in the value group. In particular, for any $p$-adic field $K$ (namely, any finite field extension of $\QQ_p$ for any prime $p$), $f$ yields a function $f_K:\cO_K^n\to \ZZ$. Since Denef's results in \cite{Denef}
one knows, under natural integrability conditions, and if one puts for real $s>0$  
\begin{equation}\label{zeta}
Z_K(s) := \int_{x\in \cO_K^n} q_K^{-s f_K(x)}  |dx|,
\end{equation}
that $Z_K(s)$ is rational in $q_K^{-s}$ where $q_K$ is the number of residue field elements. Moreover, Denef \cite{Denef}
showed that the denominator always divides a polynomial of a simple form, namely a finite product of factors of the form $q_K^{bs}$ and
$$
1-q_K^{a_i+b_is}
$$
for some integers $a_i,b$ and $b_i\not=0$, depending on $K$. The dependence on $K$ under higher and higher ramification remained highly unstudied. By the uniform treatment of this paper, we find that the list of candidate poles is finite, even when $K$ varies over all local fields of characteristic zero. More precisely, there are $b\in\ZZ$, nonzero $c\in \QQ$, and a finite collection of pairs of integers $(a_i,b_i)$ with $b_i\not=0$, for $i=1,\ldots,N$ for some $N$, such that for any $p$-adic field $K$,
$$
Z_K(s) q_K^{(b + \ord c ) s}\prod_{i=1}^N(1-q_K^{a_i+b_is})
$$
is a polynomial in $q_K^{-s}$. See Section \ref{loc:zeta} for more general rationality results.

\subsection{}
More generally, we extend the framework of constructible exponential functions from \cite{CLexp} to all local fields of characteristic zero.
(We will call them functions ``of motivic exponential class'', or ``of $\cCexp$-class'', for short.)
Stability under integration of functions of $\cCexp$-class implies the above finiteness of candidate poles. In the case that $f$ (in (\ref{zeta})) is the order of a polynomial over $\QQ$,
this application was already known to Igusa in the 1970's by (embedded) resolution of singularities over $\QQ$, see Theorem 8.2.1 of \cite{Igusa:intro}.
Also, given a definable $f$, the application was shown by Pas for large enough residue field characteristic \cite{Pas}, and for small residue field characteristic but with bounded ramification \cite{Pas2}. Both cases treated by Pas rely on Denef-Pas quantifier elimination (the model theoretic approach).

\par

\subsection{}
Our formalism can be used to study loci. First, we deduce that certain bad or exceptional loci are small; see for example Theorem \ref{badlocus}.
Roughly, the idea is that loci of several kinds of bad 
behaviour are contained in proper Zariski closed subsets, uniformly in all local fields of characteristic zero, roughly as in Theorem E of \cite{AizDr}.
Many such results are already known for large enough residue field characteristic (or assuming bounds on the ramification), so that the new point is again to be completely uniform in all local fields of characteristic zero. Secondly, the study of various kinds of loci and of extrapolations is generalized from \cite{CGH} to our setting in Section \ref{sec:loci}.

\subsection{}

In a certain sense, this paper covers a big part of the material of the course given as Nachdiplom Lectures at the ETH of Z\"urich in 2014 by the first author, where the feature to deal with all $p$-adic fields was introduced. We chose to give a didactical presentation of the results and to give complete proofs of all results in the present generality. This complements the related work of \cite{CLexp}, \cite{CLbounded}, \cite{CGH} by generalizing but also by developing almost all proofs in a single paper. Furthermore, we develop a naturality result for our classes of functions in Section \ref{sec:natur}.
The main technical novelties are related to quantifier elimination and a weak form of orthogonality; they are treated at the end of this paper, in Section \ref{sec:qe}.

Note that our framework bears nothing new in the positive characteristic case: in the small positive characteristic case deep mysteries remain, and
the large positive characteristic case can be treated on a similar footage as the large residue field case in mixed characteristic and is already developed in \cite{CLexp}, \cite{CGH}. A continued analysis of $\cCexp$-functions is developed in \cite{CGH5}.




\section{Uniform $p$-adic definable sets and functions}\label{s:gDP}

We introduce a language which we use to fix our notion of $p$-adic definable sets in a  uniform way across all finite field extensions of $\QQ_p$ for all primes $p$.
Our language has angular components and allows us to eliminate both valued field and, importantly, value group quantifiers.
This helps to control the geometrical difficulties, as do (weak) orthogonality, cell decomposition, and the Jacobian Property, see Section \ref{sec:qe}.
With these definable sets and functions and these results, we are able to build up the class of functions that are stable under integration and Fourier transformation, uniformly over all local fields of characteristic zero.

First we give some general definitions about valued fields and a generalization of the Denef-Pas language.

\subsection{Residue rings and angular component maps}\label{genval}

For $L$ a valued field\footnote{A field $L$ together with a surjective map $\ord: L \to \VG_L\cup\{+\infty\}$, with $\ord(0)=+\infty$ and with $\VG_L$ an ordered abelian (additively written) group, is called a valued field if $\ord(x+y)\geq \min(\ord x, \ord y)$ for all $x,y$ in $L$ and if moreover $\ord$ restricts to a group homomorphism $L^\times\to \VG_L$. By an ordered abelian group we mean an abelian group with a total order and such that $a<b$ implies $a+c< b+c$ for all group elements $a,b,c$.}
 with valuation map $\ord$, write $\cO_L:=\{x\in L\mid \ord (x)\geq 0 \}$ for the valuation ring with maximal ideal $\cM_L:=\{x\in L\mid \ord (x)> 0 \}$, residue field $\RF_L = \cO_L/\cM_L$, and additively written value group $\VG_L$. Write $\VGinf{}_{L}$ for the disjoint union $\VG_L\cup \{+\infty\}$.
For any integer $n>0$, write
$$
\RF_{n,L}
$$
for the quotient $\cO_L/(n\cM_L)$.
Write
$$
\res_n:L\to \RF_{n,L}
$$
for the projection $\cO_L\to \RF_{n,L}$ extended by zero outside $\cO_L$, and
$$
\res_{m,n}: \RF_{m,L}\to \RF_{n,L}
$$
for the projection map, for positive integers $n$ dividing $m$.

A collection of maps
$$\ac_n:L\to \RF_{n,L}$$
for integers $n>0$ is called a compatible system of angular component maps if for each $n$, $\ac_n$ is a multiplicative map from $L^\times$ to $\RF_{n,L}^\times$, extended by zero on zero, such that moreover $\ac_n$ coincides with $\res_n$ on $\cO_L^\times$, and, for $n$ dividing $m$, the maps $\ac_n$, $\ac_m$, and $\res_{m,n}$ form a commutative diagram.

\begin{remark}
It is important to note that $(n\cM_L)$ is the ideal of all $nm$ with $m\in \cM_L$, and
(usually) not the $n$-th power of the maximal ideal. The residue ring $\RF_{n,L}$ is 
different from the residue field of $L$ if and only if the characteristic of $\RF_{L}$ divides $n$.
\end{remark}

\subsection{The generalized Denef-Pas language}\label{ss:gDP}

Consider the many sorted first order language $\gLPas$ with sorts $\VF$, $\Res_n$ for each integer $n>0$, and $\VGinf$, and with the following symbols.
On $\VF$ and on each of the $\Res_n$ one has a disjoint copy of the ring language having symbols
$$
+,-,\cdot,0,1.
$$
On $\VGinf$  one has the language $\cL_{\rm oag,\infty}$, namely the constant symbol $+\infty$ together with the language $\cL_{\rm oag}$ of ordered abelian groups, with symbols
$$
+,-,0,<. 
$$
Furthermore one has 
the following function symbols for all positive integers $n$:
\begin{itemize}

\item

 $\ord:\VF\to \VGinf$

\item

 $\ac_n:\VF\to\Res_n$.



\end{itemize}

Let us call the language $\gLPas$ the generalized Denef-Pas language.

The generalized Denef-Pas language is designed to study (definable sets in) henselian valued fields $L$ of characteristic zero regardless of ramification degrees.

A definitional expansion of $\gLPas$ 
yields, regardless of ramification, quantifier elimination in the valued field, and, under some extra conditions, also in the value group, see Theorem \ref{QEZ}.

\subsection{Generalized Denef-Pas structures}\label{ss:gendp}

A generalized Denef-Pas structure on a valued field 
$L$ as in Section \ref{genval} consists of interpretations of all the sorts and the symbols of $\gLPas$, subject to the following natural conditions.

\begin{itemize}

\item

The sorts $\VF$, resp.~$\Res_n$ and $\VGinf$ have as interpretations $L$, resp.~$\Res_{n,L}$, both with the ring structure, and  $\VGinf{}_{L} = \VG_L \cup \{+\infty\}$ with the structure of an ordered abelian group on $\VG_L$,
 and the natural meaning for $+\infty$.

\item

The map $\ord$ is the valuation map as in Section~\ref{genval}.

\item

The maps $\ac_n:L\to \Res_{n,L}$ form a compatible system of angular component maps.

\end{itemize}

We define the $\gLPas$-theory $\gTPas$ to be the theory of the generalized Denef-Pas structures on valued fields $L$ such that moreover $L$ is a henselian valued field of characteristic $0$ (and arbitrary residue field characteristic).

\subsection{$p$-adic fields as generalized Denef-Pas structures}\label{p-adic}

For now and until the end of Section~\ref{sec:i}, the only generalized Denef-Pas structures we are interested in are $p$-adic fields, i.e., finite field extensions of $\QQ_p$ for some prime number $p$.

Let us write $\Locp$ for the collection of all local fields of characteristic zero, equipped with a uniformizer\footnote{A uniformizer for $\cO_K$ is any element in $\cO_K$ with minimal positive valuation.}
$\varpi_K$ for $\cO_K$. Such a uniformizer induces a compatible system of angular component maps: the map
$\ac_n:K\to \RF_{n,K}$ sends $0$ to $0$ and any nonzero $x$ to $x\varpi_K^{-\ord x}\bmod (n\cM_K)$.
In this way, we consider fields $K$ in $\Locp$ as
generalized Denef-Pas structures.
Note that any compatible system $\ac_n$ on a $p$-adic field arises in this way from a uniformizer $\varpi_K$ and that vice versa, the maps $\ac_n$ determine $\varpi_K$.

For fields $K \in \Locp$, we use the following notations and conventions:
Write $q_K$ for the number of elements in the residue field $\RF_K$ of $K$, and $p_K$ for its characteristic. We identify the value group of $K$ with $\ZZ$, so that $\varpi_K$ has valuation $1$.

\subsection{Uniform $p$-adic definable sets}
\label{unifdef}
We now introduce the notion of definable sets adapted to the class $\Locp$ of fields we are interested in,
i.e, sets which are $\gLPas$-definable uniformly in local fields of characteristic zero, more precisely, uniformly in $K \in \Locp$. Since this is the general framework until the end of Section~\ref{sec:i}, we will simply call them ``definable sets''.

A definable set
$$
X=(X_K)_{K\in\Locp}
$$
is a collection of sets such that there is an $\gLPas$-formula $\varphi$
such that
$$
X_K=\varphi(K),
$$
where $\varphi(K)$ is the definable subset of a Cartesian power of the universes $K$, $\RF_{n,K}$, and $\ZZ$ defined by $\varphi$ in the sense of model theory.\footnote{Note that, for a definable set $X=(X_K)_{K}$, each $X_K$ is in fact a subset of some `affine' coordinate space, as is standard in model theory.}

By abuse of notation, we will use the notation for the sorts $\VF, \dots$ also for the corresponding definable sets: We write
$$
\VF^n\times \prod_{i=1}^N \Res_{m_i}\times \VG^r
$$
for the definable set $(X_K)_K$ with $X_K = K^n\times \prod_{i=1}^N \RF_{m_i,K}\times \ZZ^r$.

A collection
$$
(f_K:X_K\to Y_K)_{K\in\Locp}
$$
of functions for definable sets $X=(X_K)_K$ and $Y=(Y_K)_K$ is called a definable function if the collection of the graphs is a definable set. We also write
$$
f:X\to Y
$$
for $(f_K:X_K\to Y_K)_{K \in\Locp}$.

\section{Functions of $\cC$-class and of $\cCexp$-class}\label{sec:mot}

In this section we introduce ``functions of motivic exponential class'' ($\cCexp$-functions, for short);
these are functions which are given uniformly in all local fields of characteristic zero,
more precisely, uniform in all $K\in \Locp$. On our way, we first introduce a smaller class, called $\cC$-functions.
Compared to the notions $\cC$ and $\cCexp$ of \cite{CLoes} \cite{CLexp}, the present context may be considered as `semi-motivic', since, for the sake of simplicity, we do not allow other valued fields than local ones.

\subsection{$\cC$-functions}\label{sec:cC-func}

Let $\AA$ be the
ring of the following rational functions over $\ZZ$
$$
\AA := \ZZ[q , 1/q , \bigcup_{i>0}\{\frac{1}{1-q^{-i}} \} ] 
$$
where $q$ is a formal variable and $i$ runs over positive integers.

Note that any element $a(q)\in \AA$ can be evaluated at any real number $q=q_0$ with $q_0>1$.

For a definable set $X$, by a function $f\colon X \to \CC$ we mean a tuple $f = (f_K\colon X_K \to \CC)_{K\in\Locp}$. We turn the set of functions $f\colon X \to \CC$ into a ring using pointwise addition and multiplication,
namely,
$$
f_1+f_2 = (f_{1K}+f_{2K} : X_K\to \CC)_K
$$
for $f_1, f_2\colon X_K \to \CC$ and, likewise,
$$
f_1\cdot f_2 = (f_{1K}\cdot f_{2K}:X_K\to \CC)_K.
$$

For a definable set $X$, the ring of $\cC$-functions on $X$ (or ``functions of $\cC$-class on $X$'') is denoted by $\cC(X)$ and is defined as the subring of the real-valued functions $X\to\RR$ generated by the following elements (with pointwise operations):

\begin{enumerate}
\item\label{cC1} $a : X\to \RR$ for any $a\in \AA$, where $a_K(x) := a(q_K)$ for $x\in X_K$ and  $K\in \Locp$.

\item\label{cC2} $\alpha : X\to \RR$ for any $\VG$-valued definable function $\alpha$, with the obvious meaning of $\alpha_K(x)$ for $x\in X_K$ and  $K\in \Locp$.

\item\label{cC3} $q^{\beta}: X\to \RR$ for any $\VG$-valued definable function $\beta$, with $(q^\beta)_K(x) := q_K^{\beta_K(x)}$ for $x\in X_K$ and  $K\in \Locp$.

\item\label{cC4} $\# Y : X\to \RR$ for any definable subset $Y$ of $X\times \prod_{t=1}^{\ell} \Res_{n_{t}}$ for some $\ell\geq 0$ and some $n_{t} > 0$, and where $(\# Y)_K(x) := \# (  Y_{K,x}  )$ for $x\in X_K$ and  $K\in \Locp$, with $Y_{K,x}$ consisting of $y$ such that $(x,y)$ lies in $Y_K$.
\end{enumerate}

In other words, a collection of functions
$$
f=(f_K:X_K\to\RR)_{K\in\Locp}
$$
is of $\cC$-class if and only if there are $a_i\in \AA$, integers $N\geq 0$, $s_i\geq 0$, $\ell_i\geq 0$, $n_{i,t}\> 0$, $\VG$-valued definable functions $\beta_i$ and $\alpha_{i,j}$ on $X$, and definable subsets $Y_i$ of $X\times \prod_{t=1}^{\ell_i} \Res_{n_{i,t}}$, such that
\begin{equation}
f_K(x) = \sum_{i=1}^N a_i(q_K) (\# Y_{i,K,x}) q_K^{ \beta_{i,K}(x) } \prod_{j=1}^{s_i} \alpha_{i,j,K}(x)
\end{equation}
for all $K\in \Locp$ and all $x\in X_K$. Indeed, products of generators of the form other than (\ref{cC2}) can be combined.

For fixed $K\in \Locp$, these functions in $\cC$ were first studied (in the context of stability under integration as in Theorem \ref{p1qmotexp} below) in  \cite{Denef1}, in a motivic way in \cite{CLoes}, and, in a uniform $p$-adic way, but for large $p$, in \cite{CLexp}. The notation of $\cC(X)$ resembles the one of \cite{CLoes}, the difference being that $X$ here is a definable set, and in \cite{CLoes} it is a definable subassignment.

\subsection{$\cCexp$-functions}

For any $p$-adic field $K \in \Locp$, write $\cD_K$ for the collection of additive characters $\psi:K\to\CC^\times$
which are trivial on $\cM_K$ but non-trivial on $\cO_K$; note that this definition is slightly different from previous
papers.\footnote{Previous papers only considered $\psi$ which induce a fixed map on the residue field. Which definition to use is
mostly a matter of taste; to our taste, specifying the fixed map on the residue field is a bit technical.}

Notationally, we treat $\cD = (\cD_K)_{K \in \Locp}$ in a similar way as definable sets:
$\cD \times X$ stands for $(\cD_K \times X_K)_{K \in \Locp}$,
and by a function $f:\cD \times X \to \CC$, we mean a family of functions $(f_K: \cD_K \times X_K \to \CC)_{K \in \Locp}$.

For a definable set $X$, the ring of $\cCexp$-functions on $X$ (also called ``functions of $\cCexp$-class'') is denoted by $\cCexp(X)$ and is defined as the ring of complex-valued functions on $\cD \times X$ consisting of finite sums of functions sending $(\psi,x,K)$ with $\psi\in\cD_K$, $x\in X_K$, and $K\in \Locp$ to
\begin{equation}\label{eq:gen5}
f_K(x)\cdot \sum_{y \in Y_{K,x} } \psi\big( h_K(x ,y ) + \frac{e_K(x ,y )}{N}\big),
\end{equation}
for $f\in \cC(X)$ and definable functions
$h:Y\to \VF$ and $e:Y\to \Res_N$ for some integer $N>0$, where $Y$ is a definable subset of
$X\times \prod_{i=1}^\ell \Res_{n_i}$ for some $\ell\geq 0$ and some $n_i > 0$.

Here, by $\psi( h + e/N)$ for some $h\in K$ and some $e\in \RF_{N,K}$ we mean $\psi( h + e'/N)$ for any $e'\in \cO_K$ with $\res_N(e') = e $, which is independent from the choice of $e'$ since $\psi$ lies in $\cD_K$.

Clearly $\cC(X)$ can be considered as a subring of $\cCexp(X)$.

For a function $g$ in $\cCexp(X)$ for $K\in\Locp$ and $\psi \in \cD_K$, we write $g_{K,\psi}:X_K\to \CC$ for the restriction of $g$ to $\{\psi\}\times X_K$.

In other words, a collection of functions
$$
g=(g_{K,\psi}:X_K\to\CC)_{K\in\Locp,\psi\in \cD_K}
$$
is of $\cCexp$-class if and only if there are $f_i\in \cC(X)$, $M\geq 0$, $N_i\geq 0$, $\ell_i\geq 0$, $n_{i,t} > 0$, definable subsets $Y_i$ of $X\times \prod_{t=1}^{\ell_i} \Res_{n_{i,t}}$ and definable functions $h_{i}:Y_i\to\VF$ and $e_{i}:Y_i\to \Res_{N_i}$, such that
\begin{equation}\label{eq:expSum}
g_{K,\psi}(x) = \sum_{i=1}^M f_{i,K}(x) \sum_{y\in Y_{i,K,x}} \psi\big( h_{i,K}(x ,y ) + \frac{e_{i,K}(x ,y )}{N_i}\big).
\end{equation}

\section{Integration of $\cC$- and $\cCexp$-functions}\label{sec:i}

The proofs in this section rely on the results from Section \ref{sec:qe}. To readers wishing to understand these proofs in detail, we recommend to come back to the proofs of this section after reading Section \ref{sec:qe}.


\subsection{Integration}\label{subsec:int}

The functions defined in the previous section have very good behaviour under integration, and, in particular, under Fourier transformation.
For any local field $K$ and any integer $n>0$, put the additive Haar measure on $K$ (normalized so that $\cO_K$ has measure $1$),
the counting measure on $\ZZ$ and on $\RF_{n,K}$, and the product measure on Cartesian products of such sets.

\begin{thm}[Stability under Integration]\label{p1qmotexp}
Let $f$ be in $\cCexp(W)$ for some definable sets $X$, $Y$ and $W\subset X\times Y$. Then there exists $g$ in $\cCexp(X)$ such that the following holds for all $K$ in $\Locp$, all $x\in X_K$, and all $\psi\in \cD_K$
$$
g_{K,\psi}(x) = \int_{y\in W_{K,x}}f_{K,\psi}(x,y),
$$
whenever the function $y\mapsto f_{K,\psi}(x,y)$ is integrable over $W_{K,x}$ against the product measure described just above the theorem. Moreover, if $f$ lies in $\cC(W)$, then $g$ can be taken in $\cC(X)$.
\end{thm}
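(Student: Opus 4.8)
The plan is to reduce to the case where $W \subset X \times Y$ with $Y$ being one of the three basic sorts $\VF$, $\Res_n$, or $\VG$, integrating out one variable at a time, and then to handle each of these three cases separately using the structural results from Section \ref{sec:qe}. For the case $Y = \Res_n$, integration is just a finite sum over a definable family, so one directly checks that $\sum_{y \in Y_{K,x}}$ applied to a generator of the form \eqref{eq:gen5} again has the shape of a $\cCexp$-function (the inner sum absorbs the extra $\Res$-variable, and the $\#$-generator in the $\cC$-part records anything that does not interact with $\psi$); this requires almost no work. For the case $Y = \VG$, one must sum a $\cCexp$-function over a definable set of integers. After using the refined Denef-Pas quantifier elimination (Theorem \ref{QEZ}) and the resulting description of definable subsets of $\VG^r$, such a sum is, roughly, a sum of geometric-type series $\sum_{k} a(q_K)\, q_K^{ck}\,(\text{poly in }k)$ over definable regions; convergence (guaranteed by the integrability hypothesis) forces the series to sum to an element expressible with the denominators $1-q^{-i}$ allowed in $\AA$, together with possibly a $\psi$-twisted piece that stays in $\cCexp$. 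The key point here is that the exponential factor $\psi(h_K(x,y) + e_K(x,y)/N)$ has $y$ ranging over $\VG$, so $h$ takes values in $\VF$ but its dependence on the $\VG$-variable is controlled, and one isolates when the $\psi$-term is constant (giving a genuine $\cC$-contribution after geometric summation) versus when it oscillates (where one sums a nontrivial character over a residue ring or an arithmetic progression, often giving $0$ or a Gauss-sum-type constant).

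The heart of the matter is the case $Y = \VF$: integrating one valued-field variable. Here the plan is to invoke the cell decomposition and Jacobian Property for $\gLPas$-definable sets from Section \ref{sec:qe}. One decomposes $W$ into cells over $X$; on each cell the fiber $W_{K,x}$ is, up to a definable change of the $\VF$-variable with controlled Jacobian, either a point (no integration needed) or a "ball" $\{ y : \ord(y - c_K(x)) \in \Lambda_{K,x},\ \ac_m(y - c_K(x)) = \xi_K(x)\}$-type region parametrized by a definable set in the value group and residue rings. On such a cell, the integrand's $\VF$-valued "argument" $h_K$ of $\psi$ and the $\cC$-part both become functions that, on each ball of fixed valuation, are either constant or linear enough in the ball coordinate that the integral over the ball reduces to: (i) an honest geometric/volume factor $q_K^{-\ord}$ times a value-group sum — reducing to the $\VG$ case already handled — possibly times (ii) an integral of $\psi$ over a ball, which is either the full measure (if $h_K$ is constant modulo $\cM_K$ on the ball, by the defining property of $\psi \in \cD_K$) or $0$ (if $h_K$ is "balanced", i.e. its angular component runs over a nontrivial coset). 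Assembling these pieces over all cells, and then folding the resulting value-group summation back through the $\VG$-case, produces a function manifestly of $\cCexp$-class, and of $\cC$-class when no $\psi$ was present to begin with.

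The main obstacle I expect is precisely the interaction between the exponential factor $\psi(h_K + e_K/N)$ and the cell decomposition in the $\VF$-case: one needs a version of cell decomposition adapted to the argument $h_K$ of $\psi$ — essentially a "linearization" of $h_K$ on cells (as in the preparation-of-cells arguments of \cite{CLexp}) so that on each ball $h_K$ is either constant mod $\cM_K$ or affine with invertible leading angular component — and this must be done simultaneously with preparing the $\cC$-part, uniformly in $K \in \Locp$ including small residue characteristic. This is exactly where the refined quantifier elimination and the weak orthogonality of Section \ref{sec:qe} do the real work; the rest is bookkeeping with $\AA$ and geometric series. A secondary technical point is ensuring that all the finitely many data ($a_i \in \AA$, the definable $Y_i$, $h_i$, $e_i$, etc.) can be chosen uniformly in $K$, which follows from working with $\gLPas$-formulas throughout rather than field-by-field, but must be stated carefully since the integrability hypothesis is only assumed fiberwise.
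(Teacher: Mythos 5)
Your plan matches the paper's proof essentially step for step: Fubini reduction to a single variable of sort $\Res_n$, $\VG$, or $\VF$; the $\Res_n$ case absorbed into the definition; the $\VG$ case via reparameterization to Presburger-linear data and summation of geometric series with polynomial coefficients (the paper's Lemma \ref{trl}, Corollary \ref{linear}, Proposition \ref{prop:recti}); and the $\VF$ case via cell decomposition with the Jacobian property, where $\int_B \psi(h_K)$ is either $0$ or the volume of $B$ times a constant that is made definable in the parameters (the paper's Lemma \ref{avoidSkol}). The only ingredient you leave implicit is that last step of turning the constant value of $\psi\circ h_K$ on a ball into a function factoring through $X$ and residue-ring variables, but you correctly identify all the structural results that carry the argument.
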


For readers familiar with the proofs in big residue characteristic, note that there are two main differences in the present setting:
One is that $\RF$ is replaced by the sorts $\RF_n$ throughout. The second one is more tricky and concerns the fact that
$\VG$ and the sorts $\RF_n$ are not orthogonal anymore, and in particular,
definable subsets of $\VG$ need not be Presburger definable. This problem is solved using Corollary~\ref{linear}, which states
that definable sets in $\VG$ can be made Presburger definable at the cost of reparameterization, i.e., introducing new $\RF_{n}$-variables. Below is a complete proof of Theorem \ref{p1qmotexp}, implementing these modifications of the proof of Theorem 4.4.3 of \cite{CGH}.
First we recall a lemma already used in \cite{CLoes} about geometric power series, their derivatives, and their summation properties.

\begin{lem}[Lemma 4.4.3 of \cite{CLoes}]\label{trl} Let $R$ be a ring (commutative and with unit)
and let $P$ be a degree $d$ polynomial in $R [X]$. The equality
\begin{equation}\label{delta}
\sum_{n \geq a} P (n) T^n
=
\sum_{i = 0}^d \frac{[\Delta^i P (a)] T^{ a + i}}{(1 - T)^{i + 1}}
\end{equation}
holds in $R\llb T \rrb$ for all $a$ in $ \NN$. Here $\Delta^i$ is the
$i$-th iterate of the difference operator $P \mapsto P (X + 1) - P
(X)$ with the convention $\Delta^0 P = P$. \qed
\end{lem}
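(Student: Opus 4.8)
The lemma asserts the identity
\[
\sum_{n \geq a} P(n)\, T^n \;=\; \sum_{i=0}^d \frac{[\Delta^i P(a)]\, T^{a+i}}{(1-T)^{i+1}}
\]
in $R\llb T \rrb$, for $P \in R[X]$ of degree $d$ and $a \in \NN$, where $\Delta$ is the forward difference operator.

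\textbf{Overall approach.} The plan is to prove this by induction on the degree $d$ of $P$, using the telescoping effect of the difference operator together with the basic geometric series identity. The key computational engine is the observation that multiplying a power series by $(1-T)$ and shifting indices produces the difference $P(n+1) - P(n) = (\Delta P)(n)$, which has degree strictly less than $d$; this is exactly what makes the induction work. A secondary, purely formal point is that all manipulations take place in $R\llb T \rrb$, so convergence is not an issue: two power series are equal iff all their coefficients agree, and every rearrangement below is a finite rearrangement at each fixed power of $T$.

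\textbf{Key steps.} First, for the base case $d = 0$, $P$ is a constant $c \in R$, and $\sum_{n \geq a} c\, T^n = c\, T^a/(1-T)$, which is the right-hand side since $\Delta^0 P(a) = c$ and the sum has only the $i=0$ term; this is just the geometric series. For the inductive step, suppose the identity holds for all polynomials of degree $< d$, and let $\deg P = d \geq 1$. Write $S(T) := \sum_{n \geq a} P(n) T^n$ and compute
\[
(1-T)\, S(T) \;=\; \sum_{n \geq a} P(n) T^n - \sum_{n \geq a} P(n) T^{n+1} \;=\; P(a)\, T^a + \sum_{n \geq a+1}\bigl(P(n) - P(n-1)\bigr) T^n.
\]
Now $P(n) - P(n-1) = (\Delta P)(n-1)$, so reindexing $m = n-1$ gives
\[
(1-T)\, S(T) \;=\; P(a)\, T^a + T \sum_{m \geq a} (\Delta P)(m)\, T^m.
\]
Since $\Delta P$ has degree $d-1$, apply the induction hypothesis to it:
\[
\sum_{m \geq a} (\Delta P)(m)\, T^m \;=\; \sum_{i=0}^{d-1} \frac{[\Delta^i(\Delta P)(a)]\, T^{a+i}}{(1-T)^{i+1}} \;=\; \sum_{i=0}^{d-1} \frac{[\Delta^{i+1} P(a)]\, T^{a+i}}{(1-T)^{i+1}}.
\]
Substituting back, dividing by $(1-T)$ (legitimate in $R\llb T\rrb$ since $1-T$ is a unit), and re-indexing $j = i+1$ in the sum yields
\[
S(T) \;=\; \frac{P(a)\, T^a}{1-T} + \sum_{i=0}^{d-1} \frac{[\Delta^{i+1} P(a)]\, T^{a+i+1}}{(1-T)^{i+2}} \;=\; \frac{[\Delta^0 P(a)]\, T^a}{1-T} + \sum_{j=1}^{d} \frac{[\Delta^{j} P(a)]\, T^{a+j}}{(1-T)^{j+1}},
\]
which is exactly $\sum_{i=0}^d [\Delta^i P(a)] T^{a+i}/(1-T)^{i+1}$. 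This completes the induction.

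\textbf{Main obstacle.} Honestly, there is no real obstacle here — the lemma is elementary and the proof is a short induction. The only points requiring a moment of care are: (i) checking that $\deg(\Delta P) = d - 1$ exactly (the leading term $c_d X^d$ maps to $c_d d X^{d-1} + \cdots$, so this holds since we may assume $R$ has enough structure, or more robustly one just notes $\deg \Delta P \leq d-1$, which suffices for the induction to terminate, allowing the degree to drop by at least one each time); and (ii) the bookkeeping of the index shifts $n \mapsto n-1$ and $i \mapsto i+1$, making sure the $i=0$ term (coming from $P(a) T^a/(1-T)$) combines correctly with the shifted sum. Alternatively, one could avoid induction entirely by using the Newton forward-difference expansion $P(X) = \sum_{i=0}^d \Delta^i P(a) \binom{X-a}{i}$ and the known generating function $\sum_{n \geq a} \binom{n-a}{i} T^n = T^{a+i}/(1-T)^{i+1}$, but the inductive argument above is cleaner and self-contained.
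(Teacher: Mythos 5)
Your proof is correct and follows exactly the same route as the paper, whose one-line proof is precisely the induction on $d$ via the identity $(1-T)\sum_{n\geq a}P(n)T^n = P(a)T^a + T\sum_{n\geq a}\Delta P(n)T^n$ together with $\Delta^{d+1}P=0$. Your write-up just spells out the index shifts and the degree bookkeeping in more detail.
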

\begin{proof}
This follows by induction from
$(1 - T)\cdot\sum_{n \geq a} P (n) T^n = P(a)\cdot T^a + T \cdot \sum_{n \geq a} \Delta P (n) T^n$ (and using $\Delta^{d+1} P = 0$).
\end{proof}

\begin{proof}[Proof of Theorem~\ref{p1qmotexp}]
By Fubini's Theorem, it is enough to treat the case that $Y$ is either $\VF$, $\VG$ or $\RF_{n}$ for some integer $n>0$.

The case that $Y$ is $\RF_{n}$ follows from the definitions of functions of $\cCexp$-class and of $\cC$-class, where sums over $\RF_{n,K}$ are built in.

Let us now treat the case that $Y$ is $\VG$, using terminology and results from Section \ref{sec:qe}. Let us first suppose that the definable functions which take values in $\VG$ and which appear in the build-up of $f$ (namely in the forms of generators (\ref{cC2}) and (\ref{cC3}) of  Section \ref{sec:cC-func}) are linear over $X$, that $W$ is the definable set
\begin{equation}\label{eq:W}
\{(x,y)\in X\times \VG\mid \alpha(x) \leq y \sq \beta(x)\}
\end{equation}
where $\sq$ is $<$ or no condition and where $\alpha,\beta:X\to\VG$ are definable functions, and that all other build-up data of $f$ (namely, generators  (\ref{cC4}) of  Section \ref{sec:cC-func} and $h$ and $e$ as in (\ref{eq:gen5})) factor through the projection $W\to X$. Then the conclusion follows from Lemma \ref{trl}. Indeed, for any $K$ in $\Locp$, any $x\in X_K$ and any $\psi$ in $\cD_K$, $f_{K,\psi}(x,y)$ is a finite sum of terms $T_i$ of the form
\begin{equation}\label{eq:Ti}
c_{i,K,\psi}(x)y^{a_i}q_K^{b_iy}
\end{equation}
for integers $a_i\geq 0$, rational numbers $b_i$ and  $c_i$ in $\cCexp(X)$. The integrability of $f_{K,\psi}(x,y)$ over $y$ in $W_{K,x}$ is automatic when $\sq$ is $<$ and we get $g$ from Lemma \ref{trl}. When $\sq$ is no condition, we regroup the terms if necessary, so that the pairs $(a_i,b_i)$ are mutually different for different $i$. By observing different asymptotic behavior of these terms for growing $y$, we may consider the sub-sum $\sum_{i\in J}T_i$ with $i\in J$ if $b_i<0$. This time we apply Lemma \ref{trl} to this sub-sum to find $g$.

The Presburger results from Section \ref{sec:presburgering} (namely Corollary \ref{linear} and Proposition \ref{prop:recti}) together with the already treated case that $Y= \RF_{n}$ finish the general case that $Y = \VG$
(using that we already know how to treat $\RF_n$-variables).

Finally we treat the case that $Y$ is $\VF$.

By Theorem \ref{cd} and
by the already treated cases that $Y = \RF_n$ and $Y= \VG$, it is enough to treat the case that $W_{K,x}$ is a single open ball in $K$ for each $K$ and each $x\in X_K$.
Moreover, on these balls, we may assume that the functions like $h$ as in (\ref{eq:gen5}) that appear in the build-up of $f$ have the Jacobian property, and that all other build-up data of $f$ factors through the projection $W\to X$.
But then calculating the integral is easy, and goes as follows. By the Jacobian property for $h$, the set $h_K(x,W_{K,x})$ is an open ball, say, of valuative radius $n_K\in\ZZ$. If $n_K\leq 0$, then the integral of $\psi(h_K(x,y))$ over $y\in W_{K,x}$ equals zero for each $\psi$ in $\cD_K$. If $n_K>0$, then $\psi(h_K(x,y))$ is constant on $W_{K,x}$, say, equal to $\xi_{K,\psi,x}$, and hence the integral of $\psi(h_K(x,y))$ over
$y\in W_{K,x}$ equals the volume of the ball $W_{K,x}$, say $q_K^{m_K}$, times $\xi_{K,\psi,x}$. By Lemma \ref{avoidSkol} and by the already treated cases that $Y = \RF_n$, we may suppose that $h$ factors through the projection $W\to X$, so that $q_K^{m_K}\xi_{K,\psi,x}$ lies in $\cCexp(X)$ by definition of $\cCexp$-functions and we are done.
\end{proof}

Note that Theorem \ref{p1qmotexp} generalizes Theorem 4.4.3 of \cite{CGH} and Proposition 8.6.1 and Theorem 9.1.5 from \cite{CLexp} as explained in the introduction.

\subsection{Naturality of the $\cC$-class and $\cCexp$-class}\label{sec:natur}

\begin{prop}\label{prop:natcC}
Consider collections consisting of one ring $C(X)$ of functions
$$
f: X \to \RR
$$
(in the sense of Subsection~\ref{sec:cC-func}) for each definable set $X$. The collection $\cC(X)$ is the smallest such collection with the following
properties:
\begin{enumerate}
\item
For every definable set $X$, $C(X)$ contains the characteristic function of every definable set $A \subset X$.
\item
The collection is stable under integration,
namely, for any $X$ and $Y$ and any $f\in C(X\times Y)$ such that for each $K$ and each $x\in X_K$, the function $y\mapsto f_K(x,y)$ is integrable over $Y_K$, the function
$$
(K,x) \mapsto \int_{y\in Y_K} f_K(x,y) |dy|
$$
lies in $C(X)$ (with our usual product measure on $Y_K$).
\end{enumerate}
\end{prop}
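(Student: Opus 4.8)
The plan is to show two things: first, that $\cC$ itself satisfies properties (1) and (2); second, that any collection $C$ satisfying (1) and (2) contains $\cC(X)$ for every definable $X$. The first part is essentially a recollection of earlier material: property (1) holds because the characteristic function of a definable $A\subset X$ is the function $\#Y$ for $Y=A$ (viewed inside $X\times\prod_{t=1}^0\Res_{n_t}$, i.e.\ with no residue-ring factors), hence a generator of type (\ref{cC4}); property (2) is exactly Theorem \ref{p1qmotexp} restricted to the $\cC$-part of the statement (the final sentence ``if $f$ lies in $\cC(W)$, then $g$ can be taken in $\cC(X)$''). So the content is the minimality/second part.

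For minimality, fix a collection $C$ with properties (1) and (2); I want $\cC(X)\subseteq C(X)$. Since $\cC(X)$ is generated as a ring by the four types of generators (\ref{cC1})--(\ref{cC4}), and $C(X)$ is a ring, it suffices to produce each generator inside $C(X)$; I would treat them by expressing each as a fibre-integral of a characteristic function of a definable set over some extra coordinate, and then invoke stability under integration (2) together with (1). Concretely:
\begin{itemize}
\item[(\ref{cC4})] For a definable $Y\subseteq X\times\prod_{t=1}^\ell\Res_{n_t}$, the function $x\mapsto \#Y_{K,x}$ is $\int_{y}\mathbf 1_Y(x,y)$ where $y$ ranges over $\prod_t\Res_{n_t}$ with its counting measure; since $\prod_t\RF_{n_t,K}$ is finite, $\mathbf 1_Y$ is integrable in the $y$-direction, so by (1) and (2) this lies in $C(X)$. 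This is the base case and the model for the others.
\item[(\ref{cC2})] For a $\VG$-valued definable $\alpha$ on $X$: writing $\alpha=\alpha^+-\alpha^-$ with $\alpha^\pm\ge 0$ definable, it is enough to realize a nonnegative $\VG$-valued definable function $\alpha\ge 0$. Consider $Y=\{(x,t)\in X\times\VG\mid 0\le t<\alpha(x)\}$; this is definable, and $\int_{t\in\VG}\mathbf 1_Y(x,t)\,|dt|=\alpha(x)$ with the counting measure on $\ZZ$, integrable since the fibre is finite. So $\alpha\in C(X)$ by (1) and (2).
\item[(\ref{cC3})] For $q^\beta$ with $\beta$ a $\VG$-valued definable function: again reduce to $\beta\ge 0$ (for $\beta<0$ one uses $q^{\beta}=1/q^{-\beta}$ — but division is not allowed, so instead handle $q^{-\gamma}$ for $\gamma\ge 0$ directly via $Y=\{(x,y)\in X\times\VF\mid \ord(y)\ge\gamma(x)\}\subseteq X\times\cO$, whose fibre has Haar measure $q_K^{-\gamma_K(x)}$, giving $q^{-\gamma}\in C(X)$ by integrating $\mathbf 1_Y$ over $y\in\VF$; and for $\gamma\ge 0$ take $Y=\{(x,y)\in X\times\VF\mid \ord(y)\ge -\gamma(x),\ \ord(y)\le 0\}$ or more simply $\{(x,y):\ord(y)=-j\text{ for some }0\le j\le\gamma(x)\}$, whose measure is $1+(q_K-1)q_K+\dots$; cleaner: use $Y=\{(x,y)\in X\times\VF:\ord(y)=0\text{ or }(\ord(y)<0\wedge\ord(y)\ge-\gamma(x))\}$ which has Haar measure $q_K^{\gamma_K(x)}$ up to the unit-factor $1-q_K^{-1}$, i.e.\ $(1-q^{-1})q^{\gamma}\in C(X)$; combined with $q^{-1}\in C(X)$ from the previous bullet this is not yet enough to divide, so the honest route is to note $(1-q_K^{-1})$ is nonzero and construct $q^\gamma$ by instead integrating $\mathbf 1_{Y'}$ over $y\in\VF$ with $Y'=\{(x,y):0\le\ord(y)\text{ is false}\}$... ); I will pick the cleanest definable $Y$ whose Haar volume is exactly $q_K^{\gamma_K(x)}$.
\item[(\ref{cC1})] For $a\in\AA$: $\AA$ is generated as a ring by $q$, $1/q$ and the $1/(1-q^{-i})$. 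The constant function $q$ on $X$ equals $q^{\beta}$ with $\beta\equiv 1$, handled by (\ref{cC3}); similarly $q^{-1}$. The element $1/(1-q^{-i})$ equals $\sum_{n\ge 0}q^{-in}$, which I realize as $\int_{y\in\VF}\mathbf 1_Z(y)$ with $Z=\{(x,y):\ord(y)\ge 0,\ i\mid\ord(y)\}$ — wait, that gives $\sum_{m\ge 0}q_K^{-im}$ only after rescaling; more precisely the Haar measure of $\{y:\ord(y)\in i\ZZ_{\ge 0}\}$ is $\sum_{m\ge 0}(q_K^{-im}-q_K^{-im-i+\dots})$; again I will choose the set so the volume is exactly the geometric series $\sum_{m\ge0}q_K^{-im}=1/(1-q_K^{-i})$, e.g.\ by building a definable $Z\subseteq\VF$ (or $\VF\times\Res_n$) whose fibrewise measure telescopes correctly, and conclude by (1)+(2).
\end{itemize}

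Once every generator is shown to lie in $C(X)$, since $C(X)$ is a ring and $\cC(X)$ is generated by these generators, $\cC(X)\subseteq C(X)$, proving minimality and finishing the proof.

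\medskip
The main obstacle I anticipate is not conceptual but bookkeeping: realizing each $\AA$-generator and each $q^\beta$ as the fibre-volume of an \emph{explicit definable set} so that the measure comes out to be \emph{exactly} the desired rational function of $q_K$ (the geometric series $1/(1-q^{-i})$ and the powers $q^{\pm\beta}$ all acquire spurious factors like $1-q_K^{-1}$ from naive choices of ball-type sets, and division is unavailable inside $\cC$), and making sure the integrability hypothesis in property (2) is met at each step (which forces the fibres to have finite measure, hence typically forces working with $\VG$-bounded or residue-ring fibres, or with a truncation that one removes afterwards). A secondary subtlety is reducing $\VG$-valued definable functions to nonnegative ones in a way that stays definable and uses only ring operations in the target $C(X)$ (addition and the already-constructed generators), which is routine but must be spelled out. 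None of this is deep, and each step reduces to an instance of Theorem \ref{p1qmotexp} together with property (1).
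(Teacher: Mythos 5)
Your overall strategy is the paper's strategy: verify that $\cC$ itself has properties (1) and (2) (immediate from the definitions and Theorem~\ref{p1qmotexp}), then realize each of the four generators of $\cC(X)$ as a parameter integral of an indicator function of a definable set, so that any collection $C$ satisfying (1) and (2) must contain them. Your treatment of generators (\ref{cC4}) and (\ref{cC2}) is correct and in fact slightly more direct than the paper's: you integrate against the counting measure on $\prod_t\Res_{n_t}$ resp.\ on $\VG$ (which property (2) permits, since $Y$ there is an arbitrary definable set with the product measure), whereas the paper pulls everything back to $\VF$-variables and compensates with explicit powers of $q$; your route avoids having to establish generator (\ref{cC3}) first.

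The gap is that for generators (\ref{cC3}) and (\ref{cC1}) you never actually exhibit the definable sets, and you explicitly flag that you could not make the volumes come out right. Both have clean solutions, and the second requires a small idea you did not find. For $q^{\beta}$ with $\beta$ of arbitrary sign, take $Y_K=\{(x,y)\in X_K\times K\mid \ord y\ge -\beta_K(x)\}$: for each $x$ this fibre is the ball $\varpi_K^{-\beta_K(x)}\cO_K$, of Haar measure exactly $q_K^{\beta_K(x)}$, with no spurious factor $(1-q_K^{-1})$ — such factors only arise from conditions of the form $\ord y=m$ rather than $\ord y\ge m$; you wrote down essentially this set for $q^{-\gamma}$ and then abandoned it. For $\frac{1}{1-q^{-i}}$ the device is the angular component: take $Y_K=\{(x,y)\mid \ac(y)=1,\ \ord(y)\ge -1,\ \ord(y)\equiv -1\bmod i\}$. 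The shell $\{y\mid \ord(y)=m,\ \ac(y)=1\}$ is the single coset $\varpi_K^{m}(1+\cM_K)$, of measure exactly $q_K^{-m-1}$, so summing over $m=-1,-1+i,-1+2i,\dots$ gives $\sum_{j\ge 0}q_K^{-ij}=\frac{1}{1-q_K^{-i}}$ on the nose. The condition $\ac(y)=1$ is precisely what kills the factor $(1-q_K^{-1})$ that your ``naive ball-type sets'' produce; without some such normalization the geometric series cannot be made to telescope to the required element of $\AA$, so this is a genuine (if small) missing step rather than pure bookkeeping. With these two sets supplied, your argument closes and coincides with the paper's proof.
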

\begin{proof}
We check each of the generators listed in Section \ref{sec:cC-func}.
The generator $q^{\beta}: X\to \RR$, of the kind (\ref{cC3}), comes up as the parameter integral of
$$
x\mapsto \int_{y\in K} \11_{ Y_K   }(x,y) |dy|,
$$
where $\11_{ Y_K   }$ is the characteristic function of the definable set $Y\subset X\times \VF$ with $Y_K=\{((x,y)\in X_K\times K\mid \ord y \geq - \beta(x ) \}$.

For generators of the kind (\ref{cC1}) it is enough to check that $\frac{1}{1-q^{-i}}$ is in $\cC(X)$ for $i>0$. To this end, let $Y$ be the definable subset of $X\times \VF$ such that $Y_K = \{(x,y)\in X_K\times K\mid \ac (y) = 1,\ \ord (y)\geq -1,\ \ord (y) \equiv -1 \bmod i  \}$
and consider
$$
x\mapsto \int_{y\in K} \11_{ Y_K   }(x,y) |dy|.
$$

If a generator $\alpha : X\to \RR$ of the kind (\ref{cC2}) of $\cC(X)$ takes only non-negative values, it is equal to
$$
x\mapsto \int_{y\in K} q_K^{ \ord y +1 } \11_{ Y_K   }(x,y)  |dy|,
$$
where $\11_{ Y_K   }$ is the characteristic function of the definable set $Y\subset X\times K$ with $Y_K=\{(x,y)\in X_K\times \VF\mid \ac(y)=1,\ 0 \leq \ord y <  \alpha(x)\}$.
General generators of the kind (\ref{cC2}) can be written as a difference of two of the above ones.

The generator $\# Y : X\to \RR$ with $Y\subset X\times \prod_{t=1}^{\ell} \Res_{n_{t}}$ as in (\ref{cC4}) comes up as
$$
x\mapsto \int_{z\in K^\ell } q_K^{ \sum_{t=1}^\ell 1+ \ord(n_t) } \11_{ Z_K   }(x,y)  |dz|,
$$
where $Z$ is the definable set such that
$$
Z_K=\{(x,z)\in X_K\times \cO_K^\ell\mid (x,\res_{n_1}(z_1),\ldots,\res_{n_\ell}(z_\ell)) \in Y \}.
$$
This proves the proposition and thus the naturality.
\end{proof}

\begin{prop}\label{prop:natcCexp}
The collection of the rings $\cCexp(X)$ for all definable sets $X$ is the smallest collection of rings of functions
$$
f: X \times \cD \to \CC
$$
which is stable under integration and with the properties that the characteristic function of any definable subset $A\subset X$ lies in $\cCexp(X)$ for any $A$ and $X$ and that $\psi(h)$, sending $K$ in $\Locp$, $x\in X_K$ and $\psi$ in $\cD_K$ to $\psi (h_K(x))$, lies in $\cCexp(X)$ for any definable function $h:X\to \VF$.
\end{prop}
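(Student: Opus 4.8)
The plan is to follow the proof of Proposition~\ref{prop:natcC}, adding the exponential on top. Two things must be checked. First, that the collection $(\cCexp(X))_X$ itself has the two required properties and is stable under integration: stability is exactly Theorem~\ref{p1qmotexp} (applied with $W=X\times Y$); the characteristic function $\11_A$ of a definable $A\subseteq X$ already lies in $\cC(X)$, being a generator of type~(\ref{cC4}) with no $\Res$-variables, hence lies in $\cCexp(X)$; and $\psi(h)$ for a definable $h\colon X\to\VF$ is the generator~(\ref{eq:gen5}) with $f=1$, with $Y=X$ (no $\Res$-variables), with $N=1$ and $e=0$, using that $\psi$ is trivial on $\cM_K$. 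Second, that $\cCexp(X)\subseteq C(X)$ for every $X$, whenever $(C(X))_X$ is a collection of rings of $\CC$-valued functions on $X\times\cD$, stable under integration, containing all characteristic functions of definable sets and all $\psi(h)$ with $h$ a definable function to $\VF$.

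For this second part I would first obtain the inclusion $\cC(X)\subseteq C(X)$, a $\cC$-function being regarded as a function on $X\times\cD$ that does not depend on $\psi$. This reduces to Proposition~\ref{prop:natcC}: let $C'(X)$ be the set of those $f\colon X\to\RR$ for which the function $(x,\psi)\mapsto f(x)$ on $X\times\cD$ lies in $C(X)$. Each $C'(X)$ is a ring of real-valued functions; it contains every definable characteristic function because $C(X)$ does; and $(C'(X))_X$ is stable under integration, because integrating a function that does not depend on $\psi$ yields one that does not depend on $\psi$. By the minimality in Proposition~\ref{prop:natcC}, $\cC(X)\subseteq C'(X)$, i.e.\ every $\cC$-function lies in $C(X)$. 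As $C(X)$ is a ring and a general element of $\cCexp(X)$ is a finite sum of products $f\cdot S$ with $f\in\cC(X)$ and $S$ of the form $(K,\psi,x)\mapsto\sum_{y\in Y_{K,x}}\psi\big(h_K(x,y)+\tfrac{e_K(x,y)}{N}\big)$ for definable $Y\subseteq X\times\prod_{t=1}^{\ell}\Res_{n_t}$, $h\colon Y\to\VF$ and $e\colon Y\to\Res_N$, it suffices to prove that each such $S$ lies in $C(X)$.

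The key step is to present $S$ as a valued-field integral. I introduce $\ell+1$ new $\VF$-variables $z=(z_1,\dots,z_\ell)$ and $w$, write $\bar z:=(\res_{n_1}(z_1),\dots,\res_{n_\ell}(z_\ell))$, and let $Z\subseteq X\times\VF^{\ell+1}$ be the definable set given by
\[
Z_K=\{(x,z,w) : z\in\cO_K^\ell,\ w\in\cO_K,\ (x,\bar z)\in Y_K,\ \res_N(w)=e_K(x,\bar z)\},
\]
and $\tilde h\colon X\times\VF^{\ell+1}\to\VF$ the definable function equal to $h_K(x,\bar z)+w/N$ on $Z_K$ (where $w/N$ is the unique $u$ with $Nu=w$, well defined since $K$ has characteristic zero) and to $0$ elsewhere. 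Then $\11_Z\cdot\psi(\tilde h)$ lies in $C(X\times\VF^{\ell+1})$, being a product in that ring of the characteristic function of a definable set and of $\psi$ of a definable $\VF$-valued function. For fixed $x$ and $y\in Y_{K,x}$, the fibre $\{z:\bar z=y\}$ has measure $\prod_t q_K^{-(\ord n_t+1)}$, the fibre $\{w\in\cO_K:\res_N(w)=e_K(x,y)\}$ has measure $q_K^{-(\ord N+1)}$, and along the latter $w/N$ ranges over a coset of $\cM_K$, so $\psi(h_K(x,y)+w/N)=\psi\big(h_K(x,y)+e_K(x,y)/N\big)$ throughout, since $\psi$ is trivial on $\cM_K$. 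Thus, by Fubini (the integrand is bounded by $1$ and supported on $\cO_K^{\ell+1}$, hence integrable),
\[
\int_{(z,w)\in K^{\ell+1}}\11_{Z_K}(x,z,w)\,\psi(\tilde h_K(x,z,w))\,|dz\,dw|=q_K^{-(\ord N+1)-\sum_t(\ord n_t+1)}\cdot S(K,\psi,x).
\]
By stability under integration the left-hand side lies in $C(X)$, and multiplying by $q^{\beta}$ with $\beta:=(\ord N+1)+\sum_t(\ord n_t+1)$ --- a $\cC$-generator of type~(\ref{cC3}), hence in $C(X)$ by the reduction above --- gives $S\in C(X)$, which completes the argument.

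The main obstacle is precisely this last step: turning the finite exponential sum over residue-ring variables into an honest valued-field integral, and in particular dealing with the term $e/N$. Since there is in general no definable lift of $e\colon Y\to\Res_N$ to an $\cO_K$-valued definable function (no definable Skolem functions being available in this language), one has to integrate over all admissible lifts $w$ simultaneously and exploit triviality of $\psi$ on $\cM_K$ to see that the integrand is constant along each $w$-fibre, with the Haar-measure normalization $q_K^{-(\ord N+1)}$ of that fibre contributing exactly the correcting factor. The passage from $(C(X))_X$ to $(C'(X))_X$ needed to invoke Proposition~\ref{prop:natcC} is routine bookkeeping.
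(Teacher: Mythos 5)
Your proof is correct and follows essentially the same route as the paper's: the paper likewise realizes the exponential sum over residue-ring variables as a valued-field integral over the set of all lifts $(z,w)\in\cO_K^{\ell+1}$ with $\res_N(z_{\ell+1})=e_K(x,\res_\bullet(z))$, using triviality of $\psi$ on $\cM_K$ and the normalizing factor $q_K^{(1+\ord N)+\sum_t(1+\ord n_t)}$. Your write-up merely makes explicit some steps the paper leaves implicit (the reduction of $\cC(X)\subseteq C(X)$ to Proposition~\ref{prop:natcC} and the verification that $\cCexp$ itself has the stated properties), which is fine.
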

\begin{proof}
We have to show that
\begin{equation}\label{genpsi}
(K,x)\mapsto \sum_{y \in Y_{K,x} } \psi\big( h_K(x ,y ) + \frac{e_K(x ,y )}{N}\big),
\end{equation}
lies in the above-mentioned smallest collection,
for a definable subset $Y$ of
$X\times \prod_{i=1}^\ell \Res_{n_i}$ for some $\ell\geq 0$ and some $n_i > 0$ and definable functions
$h:Y\to \VF$ and $e:Y\to \Res_N$ for some integer $N>0$. This is done as for generator (\ref{cC4}) in the proof of Proposition \ref{prop:natcC}.
Namely, consider
\begin{equation}\label{genpsi2}
(K,x)\mapsto \int_{z\in K^{\ell +1} } q_K^{1+\ord N + \sum_{t=1}^\ell 1+ \ord(n_t) } \psi ( g_K(x,z) + \frac{z_{\ell+1}}{N}  )  |dz|,
\end{equation}
where $Z$ is the definable set such that
$$
Z_K=\{(x,z)\in X_K\times \cO_K^{\ell +1 }\mid (x,\res_{\bullet}(z)) \in Y,
$$
$$
\res_{N}(z_{\ell+1}) =  e_K(x,\res_{\bullet}(z))\}
$$
and where $g$ is a definable function on $Z$ such that $g_K(x,z) = h_K (x, \res_{\bullet}(z))$, with
$\res_{\bullet}(z) = (\res_{n_1}(z_1),\ldots,\res_{n_{\ell}}(z_\ell)) $. Then (\ref{genpsi}) equals (\ref{genpsi2}) as required.
This proves the proposition and thus the naturality of $\cCexp$.
\end{proof}
Finally note that the functions of the form $\psi(h)$ in Proposition \ref{prop:natcCexp} can easily be created from functions of $\cC$-class by performing Fourier transformation as in Corollary \ref{stab:four:I} and change of variables as in Proposition \ref{cov}.

\subsection{}

The Fourier transform of an $L^1$ function of $\cCexp$-class is of $\cCexp$-class. (For the harder result about Fourier transform of $L^2$-functions of $\cCexp$-class, see \cite{CGH5}.) More generally, we have the following.

\begin{cor}[Stability under Fourier transformation]\label{stab:four:I}
Let $f$ be in $\cCexp(X\times \VF^m)$ for some $m\geq 0$ and some definable set $X$. Then there exists $\cF_{/X}(f)$ in $\cCexp(X\times \VF^m)$ such that the following holds for all $K$ in $\Locp$, all $\psi\in \cD_K$, and all $(x,z)\in X_K\times K^m$
$$
(\cF_{/X}(f))_{K,\psi}(x,z) = \int_{y\in K^m}f_{K,\psi}(x,y) \psi ( y\cdot z )|dy|,
$$
whenever $x\in X_K$ is such that $y\mapsto f_{K,\psi}(x,y)$ is integrable over $K^m$, and where $y\cdot z = \sum_{i=1}^m y_iz_i$.
\end{cor}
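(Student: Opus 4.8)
The plan is to deduce this immediately from Theorem \ref{p1qmotexp} (Stability under Integration), together with the fact that $\cCexp$ is closed under multiplication by the specific exponential functions $\psi(y\cdot z)$. First I would observe that the function $(\psi, x, y, z) \mapsto \psi(y\cdot z)$ is of $\cCexp$-class on $X\times \VF^m\times \VF^m$: indeed $y\cdot z = \sum_{i=1}^m y_iz_i$ is a definable function $X\times \VF^m\times \VF^m\to \VF$ (it is a polynomial in the valued-field coordinates), so $\psi(y\cdot z)$ is exactly one of the basic generators of the form $\psi(h)$ appearing in \eqref{eq:expSum} (with trivial residue-ring part, i.e.\ $N=1$ and $e\equiv 0$, and empty $Y$-fibre sum, i.e.\ $\ell=0$). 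Since $\cCexp(X\times \VF^m\times\VF^m)$ is a ring, the product
$$
F := f(x,y)\cdot \psi(y\cdot z)
$$
lies in $\cCexp(X\times \VF^m\times \VF^m)$, where we regard $f\in\cCexp(X\times\VF^m)$ as a function of $(x,y)$ pulled back along the projection that forgets $z$ (pullback along a definable map preserves $\cCexp$-class, which is clear from the defining formulas \eqref{eq:expSum}).

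Next I would apply Theorem \ref{p1qmotexp} to $F$, integrating out the $y$-variables. Concretely, set in the notation of that theorem $\tilde X := X\times \VF^m$ (with coordinates $(x,z)$), $\tilde Y := \VF^m$ (coordinates $y$), and $W := \tilde X\times \tilde Y$. Then $F\in\cCexp(W)$, and Theorem \ref{p1qmotexp} produces $g\in\cCexp(\tilde X) = \cCexp(X\times\VF^m)$ with
$$
g_{K,\psi}(x,z) = \int_{y\in K^m} F_{K,\psi}(x,y,z)\,|dy| = \int_{y\in K^m} f_{K,\psi}(x,y)\,\psi(y\cdot z)\,|dy|
$$
for all $K\in\Locp$, all $\psi\in\cD_K$, and all $(x,z)\in X_K\times K^m$ for which the integrand is integrable in $y$ over $K^m$. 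We then define $\cF_{/X}(f) := g$; this is the desired element of $\cCexp(X\times\VF^m)$, and the integrability hypothesis on $x\mapsto f_{K,\psi}(x,\cdot)$ guarantees integrability of the integrand here because $|\psi(y\cdot z)| = 1$, so integrability of $f_{K,\psi}(x,\cdot)$ is equivalent to integrability of $f_{K,\psi}(x,\cdot)\psi(\cdot\,z)$.

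There is essentially no obstacle here: the corollary is a packaging of Theorem \ref{p1qmotexp} once one notes that the extra kernel $\psi(y\cdot z)$ is itself a $\cCexp$-function and that $\cCexp$ is a ring stable under definable pullback. The only point requiring a line of care is bookkeeping of the variable $z$ as a parameter (it goes into the base $X$ of Theorem \ref{p1qmotexp}, not into the integration variable), and the remark that the measure $|dy|$ used here — product of additive Haar measures on the $m$ copies of $K$, each normalized so $\cO_K$ has measure $1$ — is precisely the product measure fixed just before Theorem \ref{p1qmotexp}, so the two integration conventions match.
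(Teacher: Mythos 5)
Your proof is correct and is precisely the argument the paper intends: the corollary is stated without proof as an immediate consequence of Theorem~\ref{p1qmotexp}, obtained by noting that $\psi(y\cdot z)$ is a generator of $\cCexp(X\times\VF^m\times\VF^m)$, multiplying it with the pullback of $f$, and integrating out $y$ with $(x,z)$ as parameters. The observation that $|\psi(y\cdot z)|_\CC=1$ correctly reconciles the integrability hypotheses, so nothing is missing.
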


In contrast to the motivic integration from \cite{CLexp}, in our formalism,
several forms of change of variables formulas come for free by properties of the Haar measure on $K^m$ for $p$-adic fields $K$, and since partial derivatives of definable functions exist almost everywhere and are again definable when extended by zero where they do
not exist. One variant is the following, with our usual affine Haar measure on $K^m$.
\begin{prop}[Change of Variables]\label{cov}
Let $f$ be in $\cCexp(X\times \VF^m)$ for some $m\geq 0$ and some definable set $X$.
Suppose that
$$
H:Y'\subset X\times \VF^m \to Y\subset X\times \VF^m
$$
is a definable bijection over $X$ for some definable sets $Y'$ and $Y$. Then, for each $K\in \Locp$ and each $x\in X_K$ the Jacobian determinant $\Jac (H_{K,x})$ of
$$
H_{K,x}:Y'_{K,x}\to Y_{K,x}:y\mapsto H_K(x,y)
$$
is well-defined for almost all $y\in Y'_{K,x}$ (for the Haar measure on $K^m$) and one has for each $\psi\in\cD_K$
$$
\int_{Y'_{K,x}\subset K^m} q_K^{-\ord (\Jac (H_{K,x}))(y')} f_{K,\psi}\big(H_K(x,y')\big) |dy'| = \int_{y\in Y_{K,x}\subset K^m} f_{K,\psi}(x,y) |dy|.
$$
Moreover, the function $y'\mapsto \Jac (H_{K,x}))(y')$ is definable, when extended by $0$ when if it is not well defined at $y'$.
\end{prop}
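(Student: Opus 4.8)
The plan is to fix $K\in\Locp$, $\psi\in\cD_K$ and $x\in X_K$ and to reduce the assertion to the classical transformation formula for the Haar measure on $K^m$ applied to the definable bijection $\phi:=H_{K,x}\colon Y'_{K,x}\to Y_{K,x}$. Since $H$ is a bijection over $X$ we may write $H_K(x,y')=(x,\phi(y'))$, so that $f_{K,\psi}(H_K(x,y'))=f_{K,\psi}(x,\phi(y'))$ and the claimed identity becomes exactly the change of variables $\int_{\phi(Y'_{K,x})}f_{K,\psi}(x,y)\,|dy|=\int_{Y'_{K,x}}f_{K,\psi}(x,\phi(y'))\,q_K^{-\ord(\Jac\phi)(y')}\,|dy'|$. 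I would first settle the definability assertions: writing $e_i$ for the $i$-th standard basis vector of $\VF^m$, the partial derivative of the $j$-th component of $H_K$ in the direction $e_i$ is, wherever it exists, the limit $\lim_{t\to 0,\ t\ne 0}t^{-1}\big((H_K)_j(x,y'+te_i)-(H_K)_j(x,y')\big)$, and ``this limit exists and equals $v$'' is expressed by an $\gLPas$-formula (for every precision $n$ there is a valuation bound $N$ such that the relevant difference quotient agrees with $v$ up to order $n$ whenever $t\ne 0$ has valuation at least $N$). Hence the set where $\Jac(H_{K,x})=\det\big(\partial_i(H_K)_j\big)$ is well defined is definable, and on it $\Jac$ is a definable function; extended by $0$ elsewhere it defines a single definable function on $Y'$, independent of $K$.

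Next comes the geometric core. By cell decomposition (Theorem~\ref{cd}) and the Jacobian Property of Section~\ref{sec:qe}, applied componentwise and refined finitely often, there is a definable partition of $Y'$ such that for every $K$ and $x$ each part $C$ has fibre $C_{K,x}\subset K^m$ that is either of dimension $<m$ --- hence Haar-null, with image $\phi(C_{K,x})$ a definable subset of $Y_{K,x}$ of dimension $<m$ and so also Haar-null --- or an open subset $U$ of $K^m$ on which every component of $\phi$ is $C^1$, $\ord(\Jac(H_{K,x}))$ is constant, and the Jacobian Property holds on sub-balls. On a part $U$ of the second type $\Jac$ cannot be identically $0$: otherwise the Jacobian Property would force $\phi$ to be constant on a ball inside $U$, contradicting injectivity; so $\Jac(H_{K,x})$ is nowhere zero on $U$, in particular it is well defined at every point of $U$ and hence almost everywhere on $Y'_{K,x}$, and by the $p$-adic inverse function theorem $\phi|_U$ is a $C^1$ diffeomorphism onto the open set $\phi(U)\subset Y_{K,x}$. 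As $\phi$ is a bijection, the $\phi(U)$ over all parts of the second type are pairwise disjoint open sets covering $Y_{K,x}$ up to a Haar-null set.

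Finally I would invoke the classical change of variables formula over the local field $K$: for a $C^1$ diffeomorphism $\phi\colon U\to V$ of open subsets of $K^m$ and any $g\in L^1(V)$,
$$\int_V g(y)\,|dy|\;=\;\int_U g(\phi(y'))\,q_K^{-\ord(\Jac\phi)(y')}\,|dy'|$$
(equivalently, $\phi$ transports the measure $q_K^{-\ord(\Jac\phi)(y')}\,|dy'|$ to $|dy|$); see e.g.\ \cite{Igusa:intro}. Applying this on each open part $U$ with $g=f_{K,\psi}(x,\cdot)$ restricted to $\phi(U)$, and summing over the partition --- the lower-dimensional parts contributing $0$ to both sides --- yields the asserted equality; the same measure transport shows that the left-hand integrand is integrable precisely when the right-hand one is.

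\emph{Main obstacle.} The delicate step is the second paragraph. Mere almost-everywhere differentiability of $\phi$ does not suffice, since over $p$-adic fields there exist $C^1$ functions with identically vanishing derivative that are not locally constant, for which no naive change of variables holds; one genuinely needs that, after cell decomposition, a definable map is $C^1$ on its full-dimensional pieces and satisfies the Jacobian Property there --- this both legitimizes the classical formula piece by piece and rules out a vanishing Jacobian for an injective map on a full-dimensional piece. Guaranteeing that the discarded sets, both $C_{K,x}$ and its image $\phi(C_{K,x})$, are truly Haar-null likewise rests on the dimension theory for definable sets underlying Section~\ref{sec:qe}.
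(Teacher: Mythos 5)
Your proposal is correct, but it does substantially more work than the paper, which disposes of this proposition in two sentences: the definability of $y'\mapsto \Jac(H_{K,x})(y')$ (extended by $0$) is declared clear, and the integral identity is said to ``follow from general measure theory on local fields''. In other words, the paper treats the change-of-variables formula as a classical per-$K$ fact about the Haar measure and definable (hence almost-everywhere strictly differentiable) bijections, and the only genuinely uniform content it records is the definability of the Jacobian --- exactly your first paragraph. Your second and third paragraphs are in effect a proof of that classical fact via cell decomposition and the Jacobian property; this is a legitimate and more self-contained route, and it has the merit of making explicit why na\"{\i}ve $C^1$ change of variables fails $p$-adically and why the Jacobian property (strict differentiability on balls) is the right substitute. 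The cost is that you lean on the dimension theory for definable sets and on a multivariate form of Theorem~\ref{cd}, both of which the paper only develops \emph{after} (and partly by means of) this proposition; the paper avoids that circularity by not routing the argument through cell decomposition at all.

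One step you should tighten: for $m>1$, the claim that $\Jac(H_{K,x})$ cannot vanish identically on a full-dimensional piece $U$ ``because the Jacobian Property would force $\phi$ to be constant on a ball'' does not follow from the paper's Jacobian property, which is a one-variable notion --- the determinant can vanish while every partial derivative obtained by freezing the other coordinates is nonzero. The correct justification is a definable Sard-type/dimension argument (the image of the zero set of the Jacobian has dimension $<m$, contradicting injectivity together with preservation of dimension), which is standard but is exactly the kind of input the paper subsumes under ``general measure theory on local fields'' rather than deriving from Theorem~\ref{cd}.
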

\begin{proof}
The statements about $y'\mapsto \Jac (H_{K,x}))(y')$ are clear. The other statements follow from general measure theory on local fields.
\end{proof}

By virtue of Proposition \ref{cov} and the Cell Decomposition Theorem \ref{cd}, a dimension theory for definable sets and a theory of definable volume forms and their associated measures can be developed naturally and in analogy to the situation with large residue field characteristic.

\subsection{Loci}\label{sec:loci}

We generalize the results on loci of \cite{CGH} to the present setting. We provide full proofs for these generalizations, including for the key technical Proposition \ref{IML} below.
For a function $f:A\to \CC$, we write $Z(f)$ for the zero locus $\{a\in A\mid f(a)=0\}$ of $f$, and similarly for an $R$-valued function $f:A\to R$ for any ring $R$.
For arbitrary sets $A\subset X\times T$ and $x\in X$, write $A_x$ for the set of $t\in T$ with $(x,t)\in A$.
For $g:A\subset X\times T\to B$ a function
and for $x\in X$, write $g(x,\cdot)$ for the function $A_x\to B$ sending $t$  to $g(x,t)$.

\begin{defn}\label{loci}
Let $T$ and $X$ be arbitrary sets, and let $f:X\times T\to \CC$ be a function.
Define the \emph{locus of boundedness of $f$ in $X$} as the set
$$
\Bdd(f,X) := \{x\in X\mid f(x,\cdot) \mbox{ is bounded on }T\}.
$$
Define the \emph{locus of identical vanishing of $f$ in $X$} as the set
$$
\Iva(f,X) := \{x\in X\mid f(x,\cdot) \mbox{ is identically zero on }T\}.
$$
If moreover $T$ is equipped with a complete measure, we define the \emph{locus of integrability of $f$ in $X$} as the set
$$
\Int(f,X) := \{x\in X\mid f(x,\cdot) \mbox{ is measurable and integrable over }T\}.
$$
\end{defn}

The following result extends \cite[Theorem~4.4.4]{CGH} and its corollary \cite[Corollary~4.4.5]{CGH} to our more general setting.

\begin{thm}[Correspondences of loci and extrapolation]\label{p2pexpmot}
Let $f$ be in $\cCexp(X\times Y)$ for some definable sets $X$ and $Y$.
Then there exist $h_i\in \cCexp(X)$ for $i=1,\ldots,5$, such that, for all $K$ in $\Locp$ and
for each $\psi\in\cD_K$, the zero locus of $h_{i,K,\psi}$ in $X_K$ equals respectively
$$
\Int(X_K, f_{K,\psi}),
$$
$$
\Bdd(X_K, f_{K,\psi}),
$$
  $$
\Iva(X_K, f_{K,\psi}),
$$
$$
\{x\in X_K\mid f_{K,\psi}(x,\cdot) \mbox{ is locally integrable on }Y_K\},
$$
$$
\{x\in X_K\mid f_{K,\psi}(x,\cdot) \mbox{ is locally bounded on }Y_K\}.
$$
Moreover, there exist (``extrapolating'') functions $g_i$ in $\cCexp(X\times Y)$ such that for each $K$, each $x$, and each $\psi$, the function $y\mapsto g_{i,K,\psi}(x,y)$ on $Y_K$ is integrable, resp.~bounded, identically vanishing, locally integrable, locally bounded, for $i=1,\ldots,5$ respectively, and such that $g_{i,K,\psi}(x,y) = f_{K,\psi}(x,y)$ whenever the function $y \mapsto f_{K,\psi}(x, y)$ on $Y_K$ satisfies the condition corresponding to $i$.
\end{thm}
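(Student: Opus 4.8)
The strategy is to reduce each of the five loci, and the corresponding extrapolation statement, to the already established stability results: Stability under Integration (Theorem~\ref{p1qmotexp}), the Cell Decomposition Theorem~\ref{cd}, and the key technical Proposition~\ref{IML} alluded to just before the statement (a ``$\cCexp$-class'' analogue of the measurability/integrability lemma of \cite{CGH}, which produces from $f \in \cCexp(X\times Y)$ a function in $\cCexp(X)$ whose zero locus is precisely the integrability locus, together with a companion ``extrapolated'' function). The overall shape of the argument follows \cite[\S4.4]{CGH}, the only genuine changes being the systematic replacement of the single residue sort $\RF$ by the family $\RF_n$, and the use of Corollary~\ref{linear} / Proposition~\ref{prop:recti} to linearize $\VG$-definable sets at the cost of new $\RF_n$-parameters, exactly as in the proof of Theorem~\ref{p1qmotexp}.

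First I would treat the integrability locus ($i=1$). Apply Proposition~\ref{IML} to $f\in\cCexp(X\times Y)$ to obtain $h_1\in\cCexp(X)$ with $Z(h_{1,K,\psi}) = \Int(X_K,f_{K,\psi})$ for all $K$ and $\psi$; the same proposition (or a routine variant of its proof) yields an extrapolating $g_1\in\cCexp(X\times Y)$ that agrees with $f$ on the integrability locus and is integrable in $y$ everywhere else (one multiplies $f$ by the characteristic function of $\Int(X_K,f_{K,\psi})$, which is itself of $\cCexp$-class since it is $1-Z(h_1)$-type data; more precisely one uses that the characteristic function of a $\cCexp$-definable zero locus lies in $\cCexp$ via Proposition~\ref{IML} applied once more, or is built into the statement of that proposition). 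For the boundedness locus ($i=2$) and the identical-vanishing locus ($i=3$) I would reduce to $i=1$ by the standard trick: $f_{K,\psi}(x,\cdot)$ is bounded on $Y_K$ iff a suitable truncation/reparametrization is integrable, and $\Iva$ is the zero locus of $\int_{Y_K}|f_{K,\psi}(x,y)|$ — here one uses that $|f|$, or rather $f\cdot\bar f$ after a Fourier-type manipulation, stays within reach of the formalism, and that counting measure on the $\RF_n$-sorts together with the $\VG$-linearization keeps everything of $\cCexp$-class. Concretely, $\Iva(X_K,f_{K,\psi})$ is cut out by applying Theorem~\ref{p1qmotexp} to an auxiliary nonnegative $\cCexp$-function whose fibrewise integral vanishes exactly when $f_{K,\psi}(x,\cdot)\equiv 0$, and the extrapolating $g_3$ is simply $f$ times the characteristic function of that locus.

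For the local versions ($i=4,5$), ``locally integrable'' and ``locally bounded'' on $Y_K$, I would first use the Cell Decomposition Theorem~\ref{cd} to stratify $Y$ into definable pieces on which $f$ has a controlled form, and then observe that local integrability/boundedness at a point $y$ is an integrability/boundedness condition on small balls around $y$; passing to a definable family of such balls parametrized by $Y$ (e.g.\ balls of valuative radius given by a $\VG$-definable function) reduces $i=4,5$ to the already-treated cases $i=1,2$ applied with $X\times Y$ in place of $X$ and the ball variable in place of $Y$, followed by a further application of $i=3$-type reasoning to collapse ``for all sufficiently small balls'' to a single $\cCexp(X)$ condition. The extrapolating functions $g_4,g_5$ are again obtained by multiplying $f$ by the characteristic function of the respective locus.

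I expect the main obstacle to be Proposition~\ref{IML} itself and, relatedly, ensuring that the characteristic functions of all the loci we produce genuinely lie in $\cCexp(X)$ rather than merely having $\cCexp$-definable descriptions: this is where the loss of orthogonality between $\VG$ and the $\RF_n$ bites, since the integrability locus is typically defined by the (non-)convergence of a geometric-type series whose exponents live in $\VG$, and one must control those exponents uniformly in $K$ and $\psi$. The remedy, as in Theorem~\ref{p1qmotexp}, is to apply Corollary~\ref{linear} and Proposition~\ref{prop:recti} to reparametrize so that the relevant $\VG$-data become Presburger-linear over new $\RF_n$-variables, after which Lemma~\ref{trl} makes the convergence condition a transparent sign/positivity condition on finitely many $\VG$-definable functions — and such conditions are readily seen to have $\cCexp$-class (indeed $\cC$-class) characteristic functions. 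Once Proposition~\ref{IML} is in place with this level of uniformity, the rest is bookkeeping of the kind already carried out in \cite{CGH}.
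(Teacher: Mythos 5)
Your identification of the key ingredients is essentially right: Proposition~\ref{IML} to handle the $\VF$-variables, Corollary~\ref{linear}/Proposition~\ref{prop:recti} to rectilinearize the $\VG$-part after reparameterization by $\RF_n$-variables, a term-by-term asymptotic analysis of expressions $c_i(x)\,y^{a_i}q_K^{b_i y}$, and the compact-neighborhood trick $\tilde f(x,y,y')=f(x,y)\,\11_{C_{K,y'}}(y)$ for the local versions $i=4,5$. This is the architecture of the paper's proof.

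However, there is a genuine gap in your construction of the extrapolating functions $g_i$. You propose, repeatedly, to take $g_i = f\cdot\11_{L}$ where $L$ is the relevant locus (e.g.\ the integrability locus). But $L$ is the zero locus of a function of $\cCexp$-class --- after the reduction it is $\{x \mid c_i(x)=0 \text{ for all } i\in I\}$ for certain coefficients $c_i\in\cCexp(X)$ --- and such a zero locus is \emph{not} a definable set, nor is its characteristic function known to be of $\cCexp$-class. Your closing paragraph asserts that the relevant conditions become ``sign/positivity conditions on finitely many $\VG$-definable functions'' with $\cC$-class indicators; this conflates the conditions on the exponent data $(a_i,b_i)$ (which are indeed definable) with the conditions on the coefficients $c_i(x)$ (which are vanishing conditions on $\cCexp$-functions, a much wilder class). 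The paper avoids this entirely: the extrapolating function is obtained by \emph{dropping the offending terms} from the explicit decomposition, $g_{1,K,\psi}(x,y)=\sum_{i\notin I}c_{i,K,\psi}(x)y^{a_i}q_K^{b_iy}$, which is manifestly of $\cCexp$-class and integrable in $y$, and agrees with $f$ exactly on the locus. The functions $h_1,h_2,h_4,h_5$ are then produced as $\Iva$-detectors of $g_i-f$, which is why the $\Iva$ case ($i=3$, where $g_3=0$ trivially) must be proved \emph{first}, not reduced to integrability of $\int|f|$ as you suggest (that integral need not converge; for $Y=\VG$ unbounded one must instead compare the asymptotics of the distinct terms $T_i$, as the paper does). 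Your order of argument ($i=1$ first, $i=3$ via an integral) would therefore need to be restructured, and without a correct construction of $g_1$ the whole chain collapses.
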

We give a full proof of this theorem, exhibiting similar modifications to the large residue field characteristic case treated in \cite{CGH} as done for the proof of Theorem~\ref{p1qmotexp}.

\begin{proof}[Proof of Theorem \ref{p2pexpmot} for existence of $h_3$ and $g_3$]
It is enough to treat the cases that $Y$ is $\RF_n$ for some $n>0$, $\VG$, or $\VF$. For $Y=\RF_n$, the result follows from Theorem \ref{p1qmotexp}, as follows. By changing the sign of all the arguments of $\psi$ appearing in the build-up up $f$ and taking the product with $f$, we obtain a function $f_1$ in $\cCexp(X\times Y)$ such that
$$
f_{1,K,\psi}(x,y) = |f_{K,\psi}(x,y)|_\CC^2
$$
for all $K$ and all $\psi$. Now let $g$ be obtained from $f_1$ and Theorem \ref{p1qmotexp}, by summing out the $\RF_n$-variable. Then the zero locus of $g$ is as desired for $h_3$.

Secondly, suppose that $Y$ is $\VG$. As in the case $Y=\VG$ in the proof of Theorem \ref{p1qmotexp}, we may suppose that $f_K$ is supported on a set of the form as in (\ref{eq:W}), and that $f$ is a finite sum of terms like $T_i$ as in (\ref{eq:Ti}) with mutually different pairs $(a_i,b_i)$ for different $i$. If $\sq$ is no condition, then, by observing different asymptotic behavior of the terms $T_i$ for different $i$, the sum of the squares of the complex moduli of the $c_i$ can serve as $h_3$. If $\sq$ is $<$ then the sum (over $y$) of squares of complex moduli of $f_{K,\psi}(x,y)$ is of $\cCexp$-class and is as desired for $h_3$.

Finally, the case that $Y=\VF$ (which is the hardest case) is reduced to the two cases treated above using Proposition \ref{IML}: The proposition allows us to write $f$ as in (\ref{eq:expSum}) in such a way that
we can treat each summand individually (taking as final $h_3$ the sum of the squares of the complex norms of the $h_3$ corresponding to the individual summands),
and for individual summands, we may, using Theorem~\ref{cd} and after introducing new $\VG$- and $\RF_n$-variables, assume that $f(x,y)$ only depends on $x$. By the already treated cases of $Y$ being $\VG$, or $\RF_n$, or a product of these, we are done for $h_3$. The construction of $g_3$ is trivial in this case, since one can simply take the zero function in $\cCexp(X\times Y)$.
\end{proof}

\begin{proof}[Proof of Theorem \ref{p2pexpmot} for existence of $h_1$ and $g_1$]
We will work with the more general situation that $f$ is a function in $\cCexp(V)$ for some definable subset $V$ of $X\times Y$, where we ask whether $f_{K,\psi}(x, \cdot)$ is integrable on the corresponding fiber $V_{K,x}$. We use Proposition \ref{IML} (for general $m\geq 0$), to simplify the shape of $Y$. By that proposition, one reduces to the case that $Y$ is a definable subset of a Cartesian product of $\prod_{i=1}^D \Res_{r_i}\times \VG^t$ for some $D\geq 0$, $r_i>0$, and $t\geq 0$.
Moreover, by Proposition \ref{prop:recti}, by working piecewise and up to performing a change of variables (for the counting measure on the value group and on the $RF_n$, hence, without correction by the determinant of a Jacobian), we may suppose, for each $K$ and each $x\in X_{K}$, that $V_{K,x}$ has the form
$$
\Lambda_{K,x}\times \NN^\ell
$$
for a finite definable subset
$$
\Lambda_{K,x}\subset (\prod_{i=1}^D \Res_{r_i,K}) \times \NN^{t-\ell}
$$
which may depend on $x$, but where the  integer $\ell\geq 0$ does not depend on $x\in X_K$ and neither on $K$ (and neither do $D$ and the $r_i$). Since integrability of $f_{K,\psi}(x,\cdot)$ over $V_{K,x}$ is equivalent to integrability of $f_{K,\psi}(x,\lambda,\cdot)$ over $\NN^\ell$ for each $\lambda\in \Lambda_{K,x}$ (by the finiteness of $\Lambda_{K,x}$), and by the already proved existence of $h_3$ for $\Iva$ of the theorem, we may suppose that $\Lambda_{K,x}$ is absent, that is,
$$
V_{K,x} = \NN^\ell
$$
for each $K$ and each $x\in X_K$. Indeed, a function $g_3$ in $\cCexp(V)$ which extrapolates integrability of the function $f_{K,\psi}(x,\lambda,\cdot)$ will also extrapolate integrability of $f_{K,\psi}(x,\cdot)$ as desired. From now on we suppose thus that $V_{K,x}=\NN^\ell$ for each $K$ and $x$.
By our application of Proposition \ref{prop:recti} and subsequent simplification of $V_{K,x}$, we may as well suppose that the definable functions which take values in $\VG$ and which appear in the build-up of $f$ (namely in the forms of generators (\ref{cC2}) and (\ref{cC3}) of  Section \ref{sec:cC-func}) are linear over $X$, and that all other build-up data of $f$ (namely, generators  (\ref{cC4}) of  Section \ref{sec:cC-func} and $h$ and $e$ as in (\ref{eq:gen5})) factor through the projection to $V\to X$. Now, for any $K$ in $\Locp$, any $x\in X_K$, and $y\in V_{K,x} = \NN^\ell$ and any $\psi$ in $\cD_K$, the function $(x,y)\mapsto f_{K,\psi}(x,y)$  is a finite sum of terms $T_i$ of the form
\begin{equation}\label{eq:Tit1}
c_{i,K,\psi}(x)y^{a_i}q_K^{b_iy},
\end{equation}
with multi-index notation, for a tuple of nonnegative integers $a_i=(a_{ij})_{j=1}^\ell$, a tuple of rational numbers $b_i=(b_{ij})_{j=1}^\ell$ and  $c_i$ in $\cCexp(X)$. By regrouping we may suppose that $(a_i,b_i)$ is different from $(a_{i'},b_{i'})$ when $i\not=i'$. Let
\begin{equation}\label{eq:I1}
I
\end{equation}
be the set of those $i$ such that $b_{ij}\geq 0$ for some $j$. For $x\in X_K$ and $\psi\in \cD_K$, the function
$$
y  \mapsto f_{K,\psi}(x,y)
$$
is integrable over $V_x=\NN^\ell$, if and only if
$$
c_{i,K,\psi}(x)=0
$$
for each $i$ in $I$. Hence, as the extrapolating function we can take
$$
g_{1,K,\psi}(x,\xi,y) = \sum_{i\not\in I} c_{i,K,\psi}(x,\xi)y^{a_i}q_K^{b_iy}.
$$
Now let $h_1$ be the function in $\cCexp(X)$ so that $h_{1,K,\psi}$ has as zero locus precisely $\Iva(g_{1,K,\psi} - f_{K,\psi},X_K)$, which exists by the already proved case for $\Iva$ of Theorem \ref{p2pexpmot}, and which is as desired by the properties of $g_1$.
\end{proof}

\begin{proof}[Proof of Theorem \ref{p2pexpmot} for existence of $h_2$ and $g_2$]
The proof is the same as for $h_1$ and $g_1$, now taking instead for $I$ in (\ref{eq:I1}) the set of those $i$ such that, for some $j$ one has either $b_{ij}=0$ and $a_{ij}>0$, or, one has $b_{ij}> 0$.
\end{proof}

\begin{proof}[Proof of Theorem \ref{p2pexpmot} for existence of $h_i$ and $g_i$ for $i=4$ and $i=5$]
Given $f$ as in the theorem, define a function $\tilde f$ in $\cCexp(X\times Y\times Y)$ such that
$$
\tilde f_{K,\psi}(x,y,y') = f_{K,\psi}(x,y) \11_{C_{K,y'}}(y)
$$
where $\11_{C_{K,y'}}$ is the characteristic function of the fiber $C_{K,y'}\subset Y_K$ (for some fixed definable set $C \subset X \times Y$)
which is a compact neighborhood of $y'\in Y_K$ (for the product topology where the discrete topology is put on $\ZZ$ and on the $\RF_{n,K}$, the ultrametric topology on $K$, and the induced subset topology on $Y_K$). Now take $g_1,g_2,h_1,h_2$ as given by the already proved part of the theorem, but for the function $\tilde f$ instead of for $f$ and with with $(x,y') \in X \times Y$ in the role of $x \in X$. Finally take  $g_4$ and $g_5$ in $\cCexp(X\times Y)$ such that
$$
g_{4,K,\psi}(x,y) = g_{1,K,\psi}(x,y,y)
$$
and
$$
g_{5,K,\psi}(x,y) = g_{2,K,\psi}(x,y,y).
$$
Then $g_4$ and $g_5$ are the desired extrapolating functions.  Now, for $i=4$ and $i=5$, let  $h_i$ be the function in $\cCexp(X)$ so that $h_{i,K,\psi}$ has as zero locus precisely $\Iva(g_{i,K,\psi} - f_{K,\psi},X_K)$, which exists by the already proved case for $\Iva$ of Theorem \ref{p2pexpmot}, and which is as desired by the properties of $g_i$.
\end{proof}

One key property that remains preserved when omitting the bound on ramification, is that several kinds of bad behavior or bad loci in the valued field are typically contained in a small definable set (for example of lower dimension). Moreover, there is typically a single proper, Zariski closed subset which is given independently of the residual characteristic and which captures bad loci uniformly in the local field.

The following theorem is an example result that `bad loci' are uniform and have a geometrical nature. Note that it in particular applies to the functions $h_i$ from Theorem \ref{p2pexpmot}.
\begin{thm}\label{badlocus}
Let $f$ be a function in $\cCexp(\VF^n)$. Then, there is a proper Zariski closed subset $C$ of $\AA^n_\QQ$ such that the locus of non-local-constancy of $f_{K,\psi}$ is contained in $C(K)$ for each $K$ in $\Locp$ and each $\psi$ in $\cD_K$. More precisely, for each $K$ in $\Locp$ and each $\psi$ in $\cD_K$, the set
$$
\{x\in K^n\mid f_{K,\psi} \mbox{ is not locally constant around $x$} \}
$$
is contained in $C(K)$, the set of $K$-rational points on $C$.
\end{thm}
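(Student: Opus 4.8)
The plan is to sandwich the non-local-constancy locus of $f_{K,\psi}$ between a single definable set of dimension $<n$, given uniformly in $K$ and independently of $\psi$, and the $K$-rational points of a proper Zariski closed subset of $\AA^n_\QQ$. For the first step, I would fix a representation of $f$ as in (\ref{eq:expSum}), so that $f$ is built from finitely many $\cC$-functions $f_i$ (each involving $\AA$-constants, $\VG$-valued definable functions, and counting functions of $\RF_n$-bundles) together with finitely many definable functions $h_i\colon Y_i\to\VF$ and $e_i\colon Y_i\to\RF_{N_i}$ on definable sets $Y_i\subseteq\VF^n\times\prod_t\RF_{n_{i,t}}$. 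Using the Cell Decomposition Theorem~\ref{cd}, there is a partition of $\VF^n$ into finitely many definable cells, uniform in $K$ and adapted to all of this data: on every cell of dimension $n$ the functions $h_{i,K}(\cdot,y)$ have the Jacobian property in the $\VF^n$-variable, uniformly in the $\RF$-variable $y$, hence are continuous there; the centers of the cell are continuous; the $\VG$-valued building blocks are linear over the cell; and the $\RF_n$-valued building blocks, as well as the fibres $Y_{i,K,x}$, are locally constant in $x$. Let $S$ be the union of all cells of dimension $<n$ together with the lower-dimensional loci inside the $n$-dimensional cells on which some relevant center vanishes. Then $S$ is definable, does not depend on $K$ or on $\psi$, and $\dim S_K<n$ for every $K$. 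For $x\notin S_K$ every building block is locally constant at $x$: the $\VG$- and $\RF_n$-valued ones directly, and $\psi\bigl(h_{i,K}(x,y)+e_{i,K}(x,y)/N_i\bigr)$ because $h_{i,K}(\cdot,y)$ is continuous at $x$ while $\psi$ is trivial on $\cM_K$ and hence constant on small balls; taking a minimum over the finitely many $i$ and the finite set $Y_{i,K,x}$, we see $f_{K,\psi}$ is constant on a small enough ball around $x$. Thus the non-local-constancy locus of $f_{K,\psi}$ is contained in $S_K$, for every $K\in\Locp$ and every $\psi\in\cD_K$.

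For the second step, it suffices to produce a single nonzero $P\in\QQ[x_1,\dots,x_n]$ with $S_K\subseteq Z(P)(K)$ for all $K$, and then to take $C=Z(P)$. I would prove the more general assertion that every definable $X\subseteq\VF^n$ with $\dim X_K<n$ for all $K\in\Locp$ lies in $Z(P)(K)$, uniformly in $K$, for some nonzero $P\in\QQ[x_1,\dots,x_n]$, by induction on $n$. Using Cell Decomposition and the dimension theory, reduce to a single cell $A$, fibred over a cell $A'\subseteq\VF^{n-1}$ with coordinates $x'=(x_1,\dots,x_{n-1})$. If the fibres of $A\to A'$ are balls or annuli, then $\dim A'_K<n-1$ for every $K$, and the induction hypothesis applied to $A'$ yields a nonzero $Q\in\QQ[x']$ with $A_K\subseteq A'_K\times K\subseteq Z(Q)(K)$. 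Otherwise $A$ is the graph of a definable function $c\colon A'\to\VF$, and the crucial sub-assertion is that the graph of any definable function $A'\subseteq\VF^{n-1}\to\VF$ is contained, uniformly in $K$, in $Z(P_0)(K)$ for some nonzero $P_0\in\QQ[x',x_n]$ depending non-trivially on $x_n$. This is where the refined quantifier elimination of Theorem~\ref{QEZ} enters: $\gLPas$ has no function symbol with codomain $\VF$ beyond the ring operations, so once all valued-field and value-group quantifiers are eliminated, the value $c(x')$ can only be pinned down by the vanishing of a polynomial over $\ZZ$ in $x',x_n$; taking the product of the finitely many such polynomials coming from a uniform definable partition of $A'$, and using the induction hypothesis on the pieces where that product degenerates, gives $P_0$. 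Multiplying the polynomials obtained over the finitely many cells yields the desired $P$, and combining the two steps proves Theorem~\ref{badlocus}.

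The genuinely delicate point is the sub-assertion in the second step: that a definable $\VF$-valued function has its graph inside a single $\QQ$-algebraic hypersurface, uniformly over all $K\in\Locp$. This is exactly what can fail for naive languages when the residue characteristic divides the integers involved, and it is what the refined Denef--Pas quantifier elimination of this paper (Theorem~\ref{QEZ}) together with the weak orthogonality established in Section~\ref{sec:qe} are designed to supply; in the bounded-ramification setting it is classical. By contrast, the inputs to the first step, namely that definable $\VF$-valued functions are continuous and that $\VG$- and $\RF_n$-valued definable functions are locally constant off a uniformly definable set of dimension $<n$, together with a cell decomposition adapted to the build-up data of $f$, are by now routine consequences of Theorem~\ref{cd}, although one still has to check that they hold with the asserted uniformity over $\Locp$ in the presence of arbitrary ramification.
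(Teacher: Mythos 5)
Your proof is correct in substance, but it takes a genuinely different and considerably longer route than the paper's, which is a direct three-line argument: by the quantifier elimination of Theorem \ref{QEZ} (in the form of Theorem \ref{weakortho}), every definable function appearing in the build-up of $f$ is described by formulas whose only valued-field ingredients are $\ord p_i(x)$ and $\ac_m(p_i(x))$ for finitely many nonzero polynomials $p_i$ over $\ZZ$; off $\bigcup_i Z(p_i)$ all of these quantities are locally constant in $x$, hence so are the $\VG$- and $\RF_m$-valued building blocks and the fibers $Y_{i,K,x}$, while the $\VF$-valued ones are continuous there, and $C=\bigcup_i Z(p_i)$ already does the job. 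Your two-step scheme --- first trapping the non-local-constancy locus in a definable set of dimension $<n$ via an iterated cell decomposition adapted to the build-up data, then proving by induction on $n$ that every definable subset of $\VF^n$ of dimension $<n$ lies uniformly in the $K$-points of a proper $\QQ$-hypersurface --- is valid, and your second step isolates a reusable general lemma; you have also correctly identified that the whole content sits in the sub-assertion that a definable graph is pinned down by the vanishing of a $\ZZ$-polynomial depending nontrivially on the last variable, which is exactly the consequence of Theorem \ref{QEZ} the paper exploits. But the detour through cell decomposition and dimension theory (which the paper only sketches, via Proposition \ref{cov} and Theorem \ref{cd}) is unnecessary: the very observation powering your sub-assertion --- at a point where none of the $p_i$ from the quantifier-free description vanish, the defining data is locally constant in the $\VF$-variables, so the point is interior to the set or to its complement --- applied once to $\VF^n$ shows both that the bad locus avoids the complement of $\bigcup_i Z(p_i)$ and that a definable set of dimension $<n$ (equivalently, with empty interior) is contained in $\bigcup_i Z(p_i)$ outright, with no induction. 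So your argument buys generality and a self-contained lemma at the cost of extra machinery; the paper's buys brevity by applying the quantifier elimination directly to the build-up functions of $f$.
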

\begin{proof}
By the quantifier elimination \ref{QEZ}, all definable functions $g_i$ appearing in the build up of $f$ according to the definition of $\cCexp$, as well as their multiplicative inverses $1/g_i$ extended by zero on $g_i=0$, have the property that there is a proper Zariski closed subset $C_i$ of $\AA_\QQ^n$ such that the locus of non-continuity of $g_{iK}$ and of $1/g_{iK}$ is contained in $C_i(K)$ for each $K$ in $\Locp$. Now the theorem follows from the definition of functions of $\cCexp$-class.
\end{proof}

Note that family variants of Theorem \ref{badlocus} can be obtained with essentially the same proof.


\subsection{Local zeta functions}\label{loc:zeta}
Local zeta functions and their poles have strong uniformity properties when the $p$-adic field varies.
Corollary \ref{thm:rational} is an example of how the present framework can describe the uniform behaviour of local zeta functions, the novelty being the combination of allowing small primes and not bounding the degree of ramification for small primes. It will be shown to follow from Theorem \ref{thm:cCs}.

Let us first introduce two new classes of functions, for use in this section only.By a function on $\RR_{\gg 0}$ we mean a function well-defined for sufficiently large real input values $s$, also written as $s\gg 0$.
For a definable set $X$, by $\cCs(X)$ we denote the ring of finite sums and products of functions of the form
\begin{enumerate}
\item[(i)]\label{cCs1} $q^{\beta s} : X\times \RR_{\gg 0}\to \RR$, for any $\VG$-valued definable function $\beta$, sending input $K\in \Locp$, $x\in X_K$ and real $s\gg 0$ to  $q_K^{\beta_K(x)s}$.
\item[(ii)]\label{cCs2} $\frac{1}{1-q^{a+bs}}: X\times \RR_{\gg 0} \to \RR$ for any integers $a$ and nonzero $b$, sending input $K\in \Locp$, $x\in X_K$ and $s\gg 0$ to  $\frac{1}{1-q_K^{a+bs}}$.
\item[(iii)]\label{cCs3} $f : X\times \RR_{\gg 0}\to \RR$, for any $f\in \cC(X)$, sending sending input $K\in \Locp$, $x\in X_K$ and $s\gg 0$ to  $f_{K}(x)$.
\end{enumerate}
For a definable set $X$, let $\cCexps(X)$ be the ring of finite sums and products of functions in $\cCs(X)$ and in $\cCexp(X)$.

\begin{thm}[Stability under Integration with complex powers]\label{p1qmotexps}\label{thm:cCs}
Let $f$ be in $\cCexps(W)$ for some definable sets $X$, $Y$ and $W\subset X\times Y$. Then there exists $g$ in $\cCexps(X)$ such that the following holds for each $K$ in $\Locp$, each $x\in X_K$, and each $\psi\in \cD_K$:
If, for all $s \gg 0$, the function $y\mapsto f_{K,\psi}(x,y,s)$ is integrable over $W_{K,x}$, then
for all $s\gg 0$,
$$
g_{K,\psi}(x,s) = \int_{y\in W_{K,x}}f_{K,\psi}(x,y,s),
$$
Furthermore, if $f$ lies in $\cCs(W)$, then $g$ can be taken in $\cCs(X)$.
\end{thm}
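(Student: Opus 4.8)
The plan is to rerun the proof of Theorem~\ref{p1qmotexp}, carrying the complex power $s$ along as an extra, inert parameter. First I would record that, writing the generators of type~(i) as $q^{\beta s}$ and absorbing the $\cC(W)$-generators of type~(iii) into the $\cCexp$-factor, every $f\in\cCexps(W)$ is a finite sum of products of the form $\frac{q^{\beta s}}{\prod_{j=1}^{r}(1-q^{a_j+b_js})}\cdot v$ with $v\in\cCexp(W)$, $\beta$ a $\VG$-valued definable function on $W$, and integers $a_j$, $b_j\neq 0$. The factors $\frac{1}{1-q^{a_j+b_js}}$ depend neither on the integration variable nor on $x$, hence pull out of the integral over $W_{K,x}$; since $\cCexps(X)$ is a ring containing $\cCs(X)$, it will suffice to show that $(x,s)\mapsto\int_{y\in W_{K,x}}q_K^{\beta_K(x,y)s}\,v_{K,\psi}(x,y)\,|dy|$ lies in $\cCexps(X)$ --- and in $\cCs(X)$ when $v\in\cC(W)$ --- whenever it is defined for all $s\gg 0$. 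Then, by Fubini and induction on the number of $\VF$- and $\VG$-coordinates of $Y$ (with $W\subset X\times Y$), I would reduce to the case $Y\in\{\RF_n,\VG,\VF\}$.

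The cases $Y=\RF_n$ and $Y=\VF$ should go through exactly as in the proof of Theorem~\ref{p1qmotexp}, with the extra factor $q^{\beta s}$ simply coming along: for $Y=\RF_n$ the integral is a finite sum over $\RF_{n,K}$, absorbed as for $\cC$- and $\cCexp$-functions; for $Y=\VF$, I would invoke the Cell Decomposition Theorem~\ref{cd}, the already-treated cases, Lemma~\ref{avoidSkol} and the Jacobian property to reduce to $W_{K,x}$ being a single ball on which $\beta$ and all build-up data of $v$ factor through $W\to X$, so that $q^{\beta s}$ becomes a $\cCs(X)$-generator of type~(i) and $\int_{y\in W_{K,x}}v_{K,\psi}(x,y)\,|dy|\in\cCexp(X)$ is computed as before; their product lies in $\cCexps(X)$.

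The case $Y=\VG$ is the heart of the argument. Following the proof of Theorem~\ref{p1qmotexp}, Corollary~\ref{linear} and Proposition~\ref{prop:recti} let me, after introducing new $\RF_n$-variables (handled by the $\RF_n$ case) and working piecewise, assume that $\beta$ and the $\VG$-valued build-up data of $v$ are linear over $X$, that $W$ has the form $\{(x,y): \alpha(x)\le y\sq\beta(x)\}$ of~(\ref{eq:W}), and that the remaining build-up data of $v$ factors through $W\to X$. Then $q_K^{\beta_K(x,y)s}\,v_{K,\psi}(x,y)$ is a finite sum of terms $c_{i,K,\psi}(x,s)\,y^{a_i}\,q_K^{(b_i+e_is)y}$ with $a_i\in\ZZ_{\geq 0}$, $b_i,e_i\in\ZZ$ (the $e_i$ coming from the linear part of $\beta$) and $c_i\in\cCexps(X)$, which we may take with pairwise distinct triples $(a_i,b_i,e_i)$. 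Here the plan is to apply Lemma~\ref{trl} with $P(y)=y^{a_i}$ and ratio $T=q_K^{b_i+e_is}$: since $T^{\alpha_K(x)+k}=q_K^{b_i(\alpha_K(x)+k)}\cdot q_K^{e_i(\alpha_K(x)+k)s}$ is a product of a $\cC(X)$-generator of type~(\ref{cC3}) and a $\cCs(X)$-generator of type~(i) (as $x\mapsto e_i(\alpha_K(x)+k)$ is $\VG$-valued definable), $(1-T)^{-(k+1)}$ is a power of a type-(ii) generator when $e_i\neq 0$ and lies in $\AA$ when $e_i=0$ (where necessarily $b_i\neq 0$), and $\Delta^kP(\alpha_K(x))\in\ZZ[\alpha_K(x)]\subset\cC(X)$, each summed term lands in $\cCexps(X)$. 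When $\sq$ is $<$ the $y$-sum is finite and integrability is automatic; when $\sq$ is no condition, the terms have pairwise distinct growth in $y$ for $s\gg 0$, so integrability for all $s\gg 0$ forces $c_{i,K,\psi}(x,s)=0$ for all $i$ with $e_i>0$ or with ($e_i=0$ and $b_i\geq 0$), and Lemma~\ref{trl} is applied only to the remaining convergent terms. This yields $g\in\cCexps(X)$, and when $v\in\cC(W)$ no characters are introduced, so $g\in\cCs(X)$.

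I expect the main obstacle to be bookkeeping rather than a new idea: one must verify that the $\cCs$-specific build-up data (the functions $\beta$ and the factors $\frac{1}{1-q^{a+bs}}$) survive the cell decomposition and Presburger reparametrization steps inside the class $\cCs$, and --- the one genuinely new computation --- that Lemma~\ref{trl} with ratio $q_K^{b+es}$ produces \emph{exactly} the two new generator shapes~(i) and~(ii) of $\cCs$, via the identity $q_K^{(b+es)(\alpha(x)+k)}=q_K^{b(\alpha(x)+k)}\cdot q_K^{e(\alpha(x)+k)s}$ which moves the factor $e(\alpha(x)+k)$ into the $\VG$-valued exponent permitted in type-(i) generators. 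A secondary point is the convergence analysis with $s$ present: the finitely many values of $s$ at which two distinct pairs $(b_i,e_i)$ yield the same exponent $b_i+e_is$, or at which some $a_j+b_js$ or $b_i+e_is$ vanishes, are harmless since both hypothesis and conclusion only concern $s\gg 0$.
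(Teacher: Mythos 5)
Your proposal is correct and follows essentially the same route as the paper: Fubini reduction to $Y\in\{\RF_n,\VG,\VF\}$, with the $\RF_n$ and $\VF$ cases carried over from Theorem~\ref{p1qmotexp} and the $\VG$ case handled by linearization via Corollary~\ref{linear} and Proposition~\ref{prop:recti}, reduction to terms $c_{i}(x,s)\,y^{a_i}q_K^{(b_i+b_i's)y}$ with distinct triples, restriction to the sub-sum convergent for all $s\gg 0$, and Lemma~\ref{trl}. The only difference is that you spell out the bookkeeping (why the summed series lands back in the generators (i)--(iii) of $\cCs$) which the paper leaves implicit.
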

\begin{proof}
The proof is very similar to the one for Theorem~\ref{p1qmotexp}.
By Fubini's Theorem, it is enough to treat the case that $Y$ is either $\VF$, $\VG$ or $\RF_{n}$ for some integer $n>0$. The only case which is different and needs special attention is when $Y$ is $\VG$.
Let us again first suppose that the definable functions which take values in $\VG$ and which appear in the build-up of $f$ (namely in the forms of generators (\ref{cC2}) and (\ref{cC3}) of Section \ref{sec:cC-func}), and of generator (i) above, are linear over $X$, that $W$ is the definable set
\begin{equation}\label{eq:W:s}
\{(x,y)\in X\times \VG\mid \alpha(x) \leq y \sq \beta(x)\}
\end{equation}
where $\sq$ is $<$ or no condition and where $\alpha,\beta:X\to\VG$ are definable functions, and that all other build-up data of $f$ (namely, generators  (\ref{cC4}) of  Section \ref{sec:cC-func} and $h$ and $e$ as in (\ref{eq:gen5})) factor through the projection $W\to X$. Then the conclusion follows from Lemma \ref{trl}.

Indeed, for any $K$ in $\Locp$, any $x\in X_K$, any real $s>0$, and any $\psi$ in $\cD_K$, $f_{K,\psi}(x,y)$ is a finite sum of terms $T_i$ of the form
\begin{equation}\label{eq:Tis}
c_{i,K,\psi}(x,s) y^{a_i}q_K^{(b_i + b'_i s)y}
\end{equation}
for integers $a_i\geq 0$, rational numbers $b_i$ and  $b'_i$, and with $c_i$ being a sum of products of generators of the form (ii) and (iii) above. The integrability of $f_{K,\psi}(x,y)$ over $y$ in $W_{K,x}$ is automatic when $\sq$ is $<$ and we get $g$ from Lemma \ref{trl}. When $\sq$ is no condition, we regroup the terms if necessary, so that the pairs $(a_i,b_i,b'_i)$ are mutually different for different $i$. By observing different asymptotic behavior of these terms for growing $y$ for any $s\gg 0$, we may consider the sub-sum $\sum_{i\in J} T_i$ with $i\in J$ if and only if $b_i + b'_i s < 0$ for all $s\gg 0$. This time we apply Lemma \ref{trl} to this sub-sum to find $g$.
The Presburger results from Section \ref{sec:presburgering} (namely Corollary \ref{linear} and Proposition \ref{prop:recti}) together with the case that $Y= \RF_{n}$ finish the general case that $Y = \VG$.
\end{proof}

Now we come to our uniform rationality application, following from stability under integration of complex powers given by the previous theorem.

\begin{cor}[Uniform rationality]\label{thm:rational}
Let $f$ be a $\VG$-valued definable function on a definable set  $X\subset \VF^n$. Suppose that, for real $s \gg 0$,
$$
Z_K(s) := \int_{x\in X_K\subset \cO_K^n} q_K^{s f_K(x)}  |dx|
$$
is finite for each $K\in \Locp$. Then there are an integer $b$, a nonzero $c\in\QQ$, and a finite collection of pairs of integers $(a_i,b_i)$ with $b_i\not=0$, for $i=1,\ldots,N$ for some $N$, such that for any $K$ in $\Locp$,
\begin{equation}\label{eq:denom}
Z_K(s) q_K^{(b+\ord c) s} \prod_{i=1}^N(1-q_K^{a_i+b_is})
\end{equation}
is a polynomial in $q_K^{-s}$.
\end{cor}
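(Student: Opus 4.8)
The plan is to deduce Corollary~\ref{thm:rational} from Theorem~\ref{thm:cCs} applied in a suitable form, following the standard argument but keeping track of the uniformity. First I would observe that the integrand $q_K^{s f_K(x)}$ is a function of $\cCexps(X)$: indeed, after decomposing $X$ into finitely many definable pieces we may, by Corollary~\ref{linear} (reparameterizing by new $\RF_n$-variables) and Proposition~\ref{prop:recti}, assume $f$ is linear over the parameters on each piece, so $q_K^{s f_K(x)}$ becomes a finite sum of generators of type (i) of $\cCs$ (times, possibly, $\RF_n$- and $\VG$-data which get summed/integrated out). Then Theorem~\ref{thm:cCs}, applied with $W = X \subset \VF^n$, $Y = \VF^n$ and $X$ replaced by the point, produces $g \in \cCexps(\mathrm{pt})$ with $g_{K}(s) = Z_K(s)$ for all $s \gg 0$ and all $K \in \Locp$ (the integral being finite by hypothesis, and since there are no $\psi$'s around, $g$ in fact lies in $\cCs(\mathrm{pt})$).

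Next I would analyze the shape of an arbitrary element of $\cCs(\mathrm{pt})$. By definition it is a finite sum of finite products of: constants $q_K^{\beta s}$ with $\beta \in \ZZ$ (type (i), evaluated at a point $\beta$ no longer depends on $x$); factors $1/(1 - q_K^{a + bs})$ with $a \in \ZZ$, $b \in \ZZ \setminus \{0\}$ (type (ii)); and elements of $\cC(\mathrm{pt})$, i.e.\ elements $a(q_K)$ of the ring $\AA = \ZZ[q,1/q,\{1/(1-q^{-i})\}_{i>0}]$ (type (iii), the $\#Y$ and $\alpha,\beta$ data at a point being just integers/elements of $\AA$). Collecting: $Z_K(s)$ equals a finite $\ZZ$-linear combination of terms
\begin{equation}\label{eq:shape}
a(q_K)\, q_K^{\beta s}\, \prod_{i} \frac{1}{1 - q_K^{a_i + b_i s}},
\end{equation}
with $a(q) \in \AA$, $\beta \in \ZZ$, and finitely many $(a_i,b_i) \in \ZZ \times (\ZZ\setminus\{0\})$, where crucially \emph{all} these data — the set of exponents $\beta$, the pairs $(a_i,b_i)$, and the elements $a(q)$ — are the same for every $K$, since $g$ is a single element of $\cCs(\mathrm{pt})$. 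Clearing the denominator $1/(1-q^{-j})$ occurring inside each $a(q) \in \AA$ contributes finitely many extra factors $(1 - q_K^{-j})$, which are of the allowed form $1 - q_K^{a_i + b_i s}$ with $b_i = 0$ — but $b_i = 0$ is not permitted in \eqref{eq:denom}, so instead I absorb: write $a(q) = P(q,1/q)/\prod_j (1-q^{-j})$ with $P$ a polynomial, and multiply through. The factor $q_K^{\beta s}$ with $\beta$ possibly negative is handled by setting $b := -\beta + (\text{a common shift making the total exponent of the form } (b+\ord c)s)$; more simply, put all $q_K^{\beta s}$ over a common $q_K^{-b s}$ (choosing $b \in \ZZ$ with $-b \le \beta$ for all $\beta$ occurring) and let the remaining nonnegative powers of $q_K^{-s}$ be part of the polynomial. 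This shows $Z_K(s) \cdot q_K^{bs} \cdot \prod_i (1 - q_K^{a_i + b_i s})$ is, for each $K$, a Laurent polynomial in $q_K^{-s}$ with coefficients in $\ZZ[q_K, 1/q_K]$; multiplying by a further fixed power $q_K^{(\ord c)s}$ and noting $q_K^{\pm 1} = q_K^{\mp \ord c \cdot (-1)}\cdots$ — cleaner: the coefficients lie in $\ZZ[q_K,1/q_K]$, and since $q_K$ is itself a fixed power $q_K = q_K^{1}$, absorbing $1/q_K$ requires the extra $q_K^{(\ord c)s}$ factor with $c \in \QQ^\times$ to clear denominators coming from the $1/q$'s in $\AA$; choosing $c$ once and for all (depending only on $g$, not on $K$) makes \eqref{eq:denom} a genuine polynomial in $q_K^{-s}$.

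The main obstacle — and the point requiring care — is exactly the bookkeeping in this last paragraph: ensuring that a single integer $b$, a single $c \in \QQ^\times$, and a single finite list $(a_i,b_i)$ work \emph{simultaneously for all} $K \in \Locp$. This is where the strength of Theorem~\ref{thm:cCs} is used: because $g$ is one element of $\cCs(X)$ built from finitely many symbolic data, the exponents and denominators appearing in \eqref{eq:shape} are $K$-independent by construction, and the only $K$-dependence is through the single variable $q_K$. Everything else — clearing the finitely many $(1-q^{-i})$-denominators inherent in $\AA$, and clearing the finitely many negative powers of $q$ — is a finite symbolic manipulation performed once, yielding $b$, $c$ and the list $(a_i,b_i)$. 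I do not expect genuine difficulty beyond this normalization; the substantive content is entirely in Theorem~\ref{thm:cCs} (hence in the cell decomposition and Presburger results of Section~\ref{sec:qe}), which has already been established.
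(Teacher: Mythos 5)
Your overall route is the paper's route: reduce via Theorem~\ref{thm:cCs} to a single element of $\cCs(*)$ at the point and then inspect the generators. (Incidentally, your first step is an overcomplication: $q^{sf}$ is \emph{already} a generator of type (i) of $\cCs(X)$, since type (i) allows an arbitrary $\VG$-valued definable $\beta$; no rectilinearization or linearity reduction is needed to place the integrand in $\cCexps$.) But there is a genuine gap in your analysis of the generators at the point, and it concerns precisely the one nontrivial point of this corollary.

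You assert that a type (i) generator evaluated at the point is ``$q_K^{\beta s}$ with $\beta\in\ZZ$'', i.e.\ a single $K$-independent integer exponent. This is false: a $\VG$-valued definable function on the one-point set is a definable \emph{constant}, which is a family $(\beta_K)_K$ that may genuinely vary with $K$ — for example $\beta_K=\ord_K(2)$, which is unbounded as $K$ ranges over $2$-adic fields of growing ramification. This is exactly why the statement contains the factor $q_K^{(b+\ord c)s}$ with $c\in\QQ^\times$ rather than $q_K^{bs}$: by the quantifier elimination, such a definable constant is bounded by $b+\ord_K(c)$ for fixed $b\in\ZZ$ and $c\in\QQ^\times$ uniformly in $K$, and it is this bound (the parenthetical remark in the paper's proof) that lets a single prefactor clear all the $s$-dependent exponents for all $K$ simultaneously. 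Your proposal never uses $\ord c$ for this purpose; instead you try to use $q_K^{(\ord c)s}$ to ``clear denominators coming from the $1/q$'s in $\AA$'', which is not needed at all (for fixed $K$, $a(q_K)\in\RR$ is a perfectly good coefficient of a polynomial in the variable $q_K^{-s}$; only $s$-dependent denominators and negative powers of $q_K^{-s}$ need clearing). As written, your argument would only justify a denominator $q_K^{bs}\prod_i(1-q_K^{a_i+b_is})$ with $b$ independent of $K$, and the uniformity of that $b$ rests on the incorrect premise that the exponents $\beta$ are $K$-independent. To repair the proof you must observe that each $\beta_K$ occurring is bounded above by $b'+\ord_K(c')$ for some fixed $b'\in\ZZ$, $c'\in\QQ^\times$ (a consequence of Theorem~\ref{QEZ}/\ref{weakortho} applied to definable subsets of $\VG$ over the empty parameter set), and choose $b,c$ accordingly.
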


\begin{proof}
By Theorem \ref{p1qmotexps}, we just have to look at finite sums and products of generators of $\cCs(*)$, where $*$ is the point (e.g. the definable set $\{0\}$). But they are clearly rational in $q_K^{-s}$ after multiplying by something of the form $q_K^{(b+\ord c) s} \prod_{i=1}^N(1-q_K^{a_i+b_is})$
as desired. (Note that indeed, the $\beta$ from the generator (i) above
can be bounded uniformly for all $K$ by some $b+\ord c$.)
\end{proof}

A more refined description for the numerator of $Z_K(s)$ of elements of $\cCs(*)$, where $*$ is the point, is also possible (in terms of cardinalities
of definable sets), by looking at the possible generaotors $\cCs(*)$.

A proof for Corollary \ref{thm:rational} using resolution of singularities and Theorem \ref{QERid} may also be thinkable, but we prefer to use the more general Theorem \ref{thm:cCs} which uses the full strength of the results of Section \ref{sec:qe}.

\subsection{A result behind bounds, integrability, and loci}

The following generalizes the key technical Proposition 4.5.8 of \cite{CGH} to our setting. It lies behind the deeper aspects of the results of Section \ref{subsec:int} and \ref{sec:loci}. Roughly, Proposition \ref{IML} with $s=1$ says that if $|f_{K,\psi}|_\CC$ is small for some $\cCexp$-function $f$, then $f$ is the
sum of small terms of a very specific form. More precisely, if $f$
cannot be written as a sum of small terms as in Proposition \ref{IML}(1), then $|f_{K,\psi}|_\CC$ has to
be large on a relatively large set, namely, on the set $W_{K,\psi,x,r}$.
In particular, $f$ is integrable resp.\ bounded resp.\ identically zero if and only if all summands are.

\begin{prop}
\label{IML} 
Let $m\geq 0$ and $s\geq 0$ be integers, let $X$ and $U \subset X \times \VF^m$ be definable,
and let $f_1,\ldots,f_s$ be in $\cCexp(U)$. Write $x$ for variables running over $X$ and $y$ for variables running over $\VF^m$.
Then there exist integers $N_\ell\geq 1$, $d\geq 0$, $D\geq 0$, $r_i\geq 0$, $t\geq 0$, a definable surjection $\varphi:U\to V\subset X\times \prod_{i=1}^D \Res_{r_i}\times \VG^t$
over $X$, definable functions $h_{\ell, i}: V \times \VF^m \to \VF$, and functions $G_{\ell, i}$ in $\cCexp(V)$ for $\ell=1,\ldots,s$ and $i=1,\ldots,N_\ell$,
such that the following conditions hold for each $K\in \Locp$ and each $\psi\in \cD_K$.
 \begin{itemize}
 \item[1)] One has
 $$
 f_{\ell,K,\psi}(x,y) = \sum_{i=1}^{N}G_{\ell, i,K,\psi}(\varphi_K(x,y))\psi(h_{\ell,i,K}(\varphi_K(x,y),y));
 $$
 \item[2)] if one sets, for $(x,r)\in V_K$,
$$
U_{K,x,r} := \{y\in U_{K,x}\mid \varphi_K(x,y)=(x,r)\}
$$
 and
 $$
 W_{K,\psi,x,r}:=\{y \in U_{K,x,r}\mid
 \sup_{\ell, i} | G_{\ell, i,K,\psi}(x,r)|_{\CC} \leq \sup_{\ell} |f_{\ell, K,\psi }(x,y)|_{\CC} \},
 $$
 then
\[
\Vol(U_{K,x,r}) \leq q_K^d \cdot \Vol(W_{K,\psi,x,r}) < +\infty,
\]
where the volume $\Vol$ is taken with respect to the Haar measure on $K^m$.
\end{itemize}
\end{prop}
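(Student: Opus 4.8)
The plan is to follow the same strategy as the proof of Proposition 4.5.8 of \cite{CGH}, replacing $\RF$ by the sorts $\RF_n$ everywhere and inserting the Presburger-rectification results (Corollary \ref{linear}, Proposition \ref{prop:recti}) to cope with the loss of orthogonality between $\VG$ and the residue sorts. The proof goes by induction on $m$, the number of $\VF$-variables over which the $f_\ell$ are considered. For $m=0$ there is nothing to do: take $\varphi=\id$, $N_\ell=1$, $G_{\ell,1}=f_\ell$, $h_{\ell,1}=0$, $d=0$, and both conditions hold trivially since $U_{K,x,r}$ is a point. For the inductive step one splits off a single last $\VF$-variable $y_m$, so it suffices to treat the case $m=1$ with the remaining variables absorbed into $X$ (Fubini-style), and then iterate.

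So the core is the case $m=1$. First I would apply the Cell Decomposition Theorem \ref{cd} to the definable set $U$, parametrizing $U$ over $X$ by cells; after passing to the reparametrization this produces the $\prod_i\Res_{r_i}\times\VG^t$-factor of $V$ (the $\RF_n$-centers' data and the valuative radii/positions of the balls), and reduces us to the situation where, over each point $(x,r)\in V_K$, the fiber $U_{K,x,r}$ is either a single open ball or a single point, and where all the definable functions $h_i$ appearing in the $\psi(\cdot)$'s in the build-up of the $f_\ell$ have the Jacobian property on that ball, while all the $\cC$-class build-up data ($\#Y$'s, the $\VG$-valued functions in generators \eqref{cC2}, \eqref{cC3}, and the $e$'s) are constant on each ball — this is where Lemma \ref{avoidSkol} and the already-established handling of $\RF_n$-variables are used to push everything that is genuinely ball-dependent into new residue- and value-group variables, i.e.\ further into $V$. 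When $U_{K,x,r}$ is a point the statement is trivial as in the $m=0$ case. When it is a ball $B$ of valuative radius $\rho$, write each $f_{\ell,K,\psi}$ restricted to $B$ as $\sum_i G_{\ell,i}(x,r)\,\psi(h_{\ell,i,K}(\cdot,y))$ with the $G_{\ell,i}$ the (ball-constant) coefficients; since $h_{\ell,i}$ has the Jacobian property, $h_{\ell,i,K}(B)$ is a ball, and $\psi\circ h_{\ell,i,K}$ is either constant on $B$ (when that ball has positive valuative radius) or equidistributed/averaging to zero. One then estimates $\Vol(W_{K,\psi,x,r})$ from below: on the set where some character sum does not see cancellation — which, by an elementary counting over the relevant finite quotient $\RF_{n,K}$, is a subball or a proportion $\geq q_K^{-d}$ of $B$ for a $d$ depending only on the (finitely many, bounded) $N_i$ and the $n_i$ involved — the modulus $\sup_\ell|f_{\ell,K,\psi}|_\CC$ dominates $\sup_{\ell,i}|G_{\ell,i,K,\psi}(x,r)|_\CC$, giving exactly the inequality $\Vol(U_{K,x,r})\leq q_K^d\Vol(W_{K,\psi,x,r})$.

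Two points require the novelties of this paper rather than a verbatim transcription. First, the cells produced by Theorem \ref{cd} have centers and radii described by definable functions into $\VG$, and because $\VG$ is no longer orthogonal to the residue sorts these need not be Presburger; here Corollary \ref{linear} and Proposition \ref{prop:recti} are invoked to linearize/rectify them after introducing finitely many extra $\RF_n$-parameters, which is precisely why $V$ is allowed a $\prod\Res_{r_i}$-factor and not merely a $\VG^t$-factor. Second, all the ``constants'' $a_i,b_i,d,D,r_i,t,N_\ell$ must be extractable uniformly in $K$; this is automatic because every bound comes from the finite combinatorial data of a single $\gLPas$-formula, and the quantifier elimination of Theorem \ref{QEZ} guarantees that cell decomposition and the Jacobian property hold uniformly with $K$-independent combinatorial content.

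\textbf{The main obstacle} I anticipate is the bookkeeping in the inductive step: when peeling off $y_m$ and applying the $m=1$ analysis, the coefficient functions $G_{\ell,i}$ produced at that stage are themselves $\cCexp$-functions in the remaining $y_1,\dots,y_{m-1}$ (living on the enlarged base), so one must apply the induction hypothesis to the whole finite family of these $G_{\ell,i}$ \emph{simultaneously} and then compose the two surjections $\varphi$ and check that conditions 1) and 2) survive the composition — in particular that the volume inequality is multiplicative across the two steps, so that the final $d$ is the sum of the two partial $d$'s, and that the nested sups in the definition of $W_{K,\psi,x,r}$ interact correctly with the ball-constancy arrangement. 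This is conceptually routine but is the place where the constants and the definition of $W$ must be massaged with care; getting the Jacobian-property normalization to hold simultaneously for all members of the growing family at each stage is the delicate part.
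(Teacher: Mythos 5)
Your overall architecture matches the paper's: induction on $m$ reducing to the case $m=1$, with the $m>1$ step handled exactly as you describe (peel off $y_m$, apply the one-variable case, apply the induction hypothesis to the resulting coefficient family, compose the two surjections and add the two $d$'s); and in the $m=1$ case you correctly deploy Theorem \ref{cd}, the Jacobian property, Lemma \ref{avoidSkol}, and the reparameterization results to reduce to fibers $U_{K,x,r}$ that are single balls on which the $h_{\ell,i}$ are prepared.

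However, there is a genuine gap at the heart of the $m=1$ case, namely in your claim that the set where ``some character sum does not see cancellation'' has proportion $\geq q_K^{-d}$ of the ball ``by an elementary counting over the relevant finite quotient $\RF_{n,K}$.'' There is no single finite quotient of uniformly bounded size over which this counting can be done: two terms $G_{\ell,i}\psi(h_{\ell,i})$ and $G_{\ell,i'}\psi(h_{\ell,i'})$ may have Jacobian constants $b_{\ell,i}, b_{\ell,i'}$ that agree modulo an arbitrarily high (parameter-dependent, hence $K$-uniformly unbounded) power of the uniformizer, in which case their cancellation structure is invisible at any fixed scale and the naive counting would force $d$ to depend on $K$ and on the parameters. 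The paper's proof resolves this with a second induction, on $N=\sum_\ell(N_\ell-1)$, the total number of extra terms: one rescales by a $\gamma_{K,x,r}$ normalizing $\max_{\ell,i}|b_{K,x,r,\ell,i}|$, groups the indices $i$ according to the residue of $\gamma_{K,x,r} b_{K,x,r,\ell,i}$, treats each group sum $f_{\ell,j}$ as a new function (strictly decreasing $\sum(N_\ell-1)$, so the induction hypothesis applies to the family $(f_{\ell,j})_{\ell,j}$), and only at the top scale — where the surviving frequencies are genuinely distinct in the residue field — invokes the finite-field Fourier inequality (Corollary 3.5.2 of \cite{CGH}) to produce, inside each ball $\xi+\gamma_{K,x,r}\cO_K$, a sub-ball of relative measure exactly $q_K^{-1}$ on which $\sup_{\ell,j}|f_{\ell,j,K,\psi}|_\CC\leq\sup_\ell|f_{\ell,K,\psi}|_\CC$. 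This multi-scale mechanism is what yields a $K$-independent $d$ (in fact $d\leq N$), losing only one factor of $q_K$ per induction step; without it, the volume estimate in 2) cannot be obtained uniformly.
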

We first generalize Lemma 3.3.6 of \cite{CGH} to our setting.
\begin{lem}\label{avoidSkol}
Let $A\subset X\times \VF$ and $h:A\to \VF$ be definable for some definable set $X$. Suppose that for each $K$ in $\Locp$, each $x\in X_K$, and for each ball $B$ contained in $A_{K,x}$, the function $h_K(x,\cdot)$ is constant modulo $(\varpi_K)$ on $B$. Then there exist positive integers $m$, $n$, a definable function
$$
\lambda:A\to A'\subset \RF_n^n\times X \mbox{ over } X
$$
and a definable function $h':A'\to \VF$ such that, for each $K$ in $\Locp$, for each $(x,y)\in A_K$ and with $(x,r)=\lambda_K(x,y)$, one has
$$
|h_K(x,y)-h'_K(x,r)| \leq 1/|m|.
$$
\end{lem}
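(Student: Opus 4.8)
The plan is to adapt the proof of \cite[Lemma~3.3.6]{CGH} to the present, ramification‑insensitive setting; the two modifications are that the single residue field sort is replaced by the family $\RF_n$, and that the Presburger reparametrization of Corollary~\ref{linear} is inserted in order to compensate for the failure of orthogonality between $\VG$ and the residue sorts. Since the conclusion is visibly stable under replacing $A$ by the pieces of a finite definable partition of $A$ over $X$ (one glues the resulting maps $\lambda$ and functions $h'$, enlarging $n$ and $m$), I would first apply the Cell Decomposition Theorem~\ref{cd} to the function $h$ and reduce to the case that $A$ is a single cell over $X$, refined so that $h$, as a function of the $\VF$‑variable $y$, has the Jacobian property (allowing a vanishing derivative, i.e.\ $h$ independent of $y$). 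If the cell is a $0$‑cell, or if $h$ is independent of $y$, then $h$ factors through the projection $A\to X$, and it suffices to take $\lambda$ constant in the $\RF_n$‑coordinates and $h'$ equal to $h$ composed with that projection; the inequality then holds with $m=1$.

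In the remaining case $A_{K,x}=\{y\mid \ord(y-c_K(x))\in G_{K,x},\ \ac_{n_0}(y-c_K(x))=\xi_K(x)\}$ for a definable center $c$, a definable $\xi:X\to\RF_{n_0}$, and a definable family of subsets $G_x\subset\VG$. Using Corollary~\ref{linear} and Proposition~\ref{prop:recti}, and introducing auxiliary $\RF_n$‑variables which will be absorbed into the target of $\lambda$, one may assume that each $G_x$ is Presburger and that the $\VG$‑valued data occurring in the cell presentation of $h$ are linear over $X$. For every $\gamma\in G_{K,x}$ the slice $B_{K,x,\gamma}:=\{y\mid \ord(y-c_K(x))=\gamma,\ \ac_{n_0}(y-c_K(x))=\xi_K(x)\}$ is an honest ball, a translate of $\varpi_K^{\gamma}(\tilde\xi_K(x)+n_0\cM_K)$ with $\tilde\xi_K(x)\in\cO_K^{\times}$ a lift of $\xi_K(x)$, so the hypothesis applies to it and forces $h_K(x,\cdot)$ to have image contained in a single coset of $\cM_K$ on $B_{K,x,\gamma}$. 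Combining this with the Jacobian property, one reads off that with respect to a ``second center'' $c'$ supplied by the cell presentation of $h$, the order $\ord(h_K(x,y)-c'_K(x))$ is affine in $\gamma$ and the angular components $\ac_N(h_K(x,y)-c'_K(x))$ depend on $\gamma$ through an eventually periodic pattern; together with the constancy modulo $\cM_K$ just obtained, this shows that the coset $h_K(x,B_{K,x,\gamma})+\cM_K$ takes only a bounded number of values as $\gamma$ ranges over $G_{K,x}$, that these are detected by $\ord(y-c(x))$ read modulo a fixed integer together with finitely many of the $\ac_n(y-c(x))$ and $\ac_N(h_K(x,y)-c'_K(x))$, and hence can be encoded by a single definable map $\lambda:A\to A'\subset\RF_n^n\times X$ over $X$ for a suitably divisible $n$. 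Defining $h'$ on $A'$ by the corresponding coset representative (built from $c'$ and the cell data) and taking $m=n_0$, the difference between $h_K(x,y)$ and $h'_K(x,\lambda_K(x,y))$ has order at least $-\ord n_0$, so it is bounded in absolute value by $q_K^{\ord n_0}=1/|n_0|$, which is the asserted bound.

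\textbf{Main obstacle.} The crux is exactly the step where the lack of orthogonality bites: one must show that the reduction of $h$ modulo $\cM_K$ along the slices $B_{K,x,\gamma}$ stabilizes, so that it is governed by finitely many residue‑ring invariants rather than by the value group. In the bounded‑ramification situation this is immediate from genuine orthogonality; here one must first pay for the Presburgerness of $G_x$ by enlarging the residue sorts (Corollary~\ref{linear}) and then argue honestly — via the affine growth of the order of $h-c'$ and the periodicity of the angular‑component pattern — that the non‑generic slices are boundedly many and are nevertheless residue‑sort detectable, the genuinely delicate point being that these non‑generic slices can sit at parameter‑dependent positions in $G_x$, which must be controlled through the Jacobian property exactly as in \cite{CGH}. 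A secondary, purely bookkeeping nuisance is the management of the moduli: the target sort $\RF_n^n$ of $\lambda$ must be taken with $n$ a common multiple of all the $n_0,N,\dots$ that occur, and the compatibility of the maps $\ac_n$ is used to see that the encoding is well defined; the discrepancy between the ideals $n\cM_K$ and $\cM_K$ is also the reason the conclusion carries the harmless slack $1/|m|$ rather than being an exact congruence modulo $\cM_K$.
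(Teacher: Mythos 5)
Your opening moves (cell decomposition of $h$ with the Jacobian property, disposing of $0$-cells and of $h$ independent of $y$) match the paper's, but the core of your argument has a genuine gap. You invoke a ``second center $c'$ supplied by the cell presentation of $h$'': Theorem~\ref{cd} prepares the \emph{domain} of $h$ and gives the Jacobian property, but it does not hand you a center for the \emph{image} of $h$. The paper obtains exactly this missing object by applying Theorem~\ref{cd} a second time, to the set $W\subset \Res_n^n\times X\times\VF$ obtained by projecting the graph of $h_{\rm par}$ onto $(r,x,h(x,y))$; it then takes $h'$ to be the center $c$ of that image cell and $m$ its depth. The bound $|h-h'|\le 1/|m|$ then falls out at once: the hypothesis together with the $1$-Jacobian property forces every ball contained in a fiber $W_{r,x}$ to have volume at most that of $\cM_K$, so each slice $\{w:\ord(w-c)=\gamma,\ \ac_m(w-c)=\xi\}$ satisfies $\gamma\ge -\ord(m)$, i.e.\ every point of $W_{r,x}$ lies within $1/|m|$ of the center. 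No choice of coset representatives is ever made — which is the whole point of the lemma (hence its label \emph{avoidSkol}).

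By contrast, your central claim — that $h_K(x,B_{K,x,\gamma})+\cM_K$ takes only boundedly many values, detected by $\ord(y-c(x))$ modulo a fixed integer together with an eventually periodic angular-component pattern — is the bounded-ramification picture and is false here. Take $A_K=\{y:1\le\ord y\le\ord_K(N),\ \ac_N(y)=1\}$ and $h(y)=1/y$: the hypothesis holds (each slice $B_\gamma=\varpi_K^\gamma(1+N\cM_K)$ maps onto a ball of valuative radius $-\gamma+\ord(N)+1\ge 1$), yet $h(A_K)$ meets $\ord_K(N)$ pairwise distinct cosets of $\cM_K$, a number that is unbounded as $K$ ranges over $\Locp$, and the relevant invariant is $\gamma$ itself on an interval of length $\ord_K(N)$, not $\gamma$ modulo a fixed integer. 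Such data \emph{can} be absorbed into residue rings (this is what $\cross_N$ and the depth of the image cell accomplish), but your mechanism does not do it, and your final step ``define $h'$ by the corresponding coset representative'' silently requires a definable section of the cosets — exactly the Skolem-type choice one cannot make without the image-cell center. (The detour through Corollary~\ref{linear} and Proposition~\ref{prop:recti} is not needed for this lemma, and your claim that $m=n_0$, the depth of the \emph{domain} cell, controls the precision is unjustified: the correct modulus is the depth of the image cell.)
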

\begin{proof}
By Theorem \ref{cd} and with its notation, there is a definable function
$$
\sigma:A\to A_{\rm par}\subset \Res_n^n\times A
$$
over $A$ onto a presented cell $A_{\rm par}$ over $\Res_n^n\times X$ such that each $h_{\rm par}$ has the $1$-Jacobian property over $\Res_n^n\times X$. Let $G$ be the graph of $h_{\rm par}$, and let $W$ be the image of $G$ under the coordinate projection $(r,x,y,h(x,y)) \to (r,x,h(x,y))$, where $x$ runs over $X$ and $y$ over $\VF$. We may suppose that $W$ is also a presented cell over $\Res_n^n\times X$, again by Theorem \ref{cd}, say, with center $c$ and depth $m$. Now take $\lambda$ to be $\sigma$ composed with the coordinate projection to $\RF_n^n\times X$, and take $h'=c$. Then $m,n,\lambda,h'$ are as desired.
Indeed, the condition about $h$ being constant modulo $(\varpi_K)$ on balls implies (together with $h_{\rm par}$ having the $1$-Jacobian property) that each ball contained in some $W_{r,x}$ has at most the volume of the maximal ideal.
\end{proof}

\begin{proof}[Proof of Proposition~\ref{IML} for $m=1$]
The statement that we have to prove allows us to work
piecewise; if we have a finite partition of $U$ into definable parts $A$, then it suffices to prove the proposition for $f_\ell$ restricted to each part $A$. We actually prove something slightly stronger than
Proposition~\ref{IML} for the case $m=1$. That is, for a given definable function $\varphi_0:U\to X\times \prod_{i=1}^{D_0} \Res_{r_{i0}}\times \VG^{t_0}$ over $X$, we prove that in addition to the conclusions 1) and 2) of the
proposition, we can require that also the following conditions 3) and 4) hold.
\begin{itemize}
\item[3)]
For each $K\in \Locp$, each $x\in X_K$ and each $r$ with $(x,r)\in V_K$, the set $U_{K,x,r}$ is either a singleton or a ball.
\item[4)]\label{cond4} The function $\varphi_0$ factors through $\varphi$, that is,  $\varphi_0= \theta\circ \varphi$ for some definable function $\theta$.
\end{itemize}

So, let a definable function $\varphi_0:U\to V_0\subset X\times \prod_{i=1}^{D_0} \Res_{r_{i0}}\times \VG^{t_0}$ over $X$ be given.
By definition of $\cCexp$, Theorem \ref{cd}, and by replacing $\varphi_0$ by a definable function through which the original $\varphi_0$ factors, we may suppose
that there are definable functions $h_{\ell, i}:V_0\times \VF \to \VF$ and functions $G_{\ell, i}$ in $\cCexp(V_0)$ such that for each $\ell$, for each $K\in \Locp$, each $\psi \in \cD_K$, each $(x,y)\in U_K$ with $x\in X_K$, and each $r$ with $(x,r)\in V_K$ one has
\begin{equation}\label{f-as-sum}
f_{\ell,K,\psi}(x,y) = \sum_{i=1}^{N_\ell}G_{\ell, i,K,\psi}(\varphi_{0,K}(x,y))\psi(h_{\ell, i,K}(\varphi_{0,K}(x,y),y)),
\end{equation}
and that the set $U^{0}_{K,x,r} := \{y\in U_{K,x} \mid \varphi_0(x,y)=(x,r)\}$ is either a singleton or a ball. Thus, we may suppose that the conditions 1), 3) and 4) already hold for $\varphi_0$.
We now construct $\varphi$ (and modify $G_{\ell, i}$ and $h_{\ell, i}$ accordingly) such that moreover 2) holds.  

We will proceed by induction on $N := \sum_{\ell=1}^s (N_\ell - 1)$.
Namely, fix $N$ and assume that for any finite family of functions $\{f_\ell\}$ on a definable set $U$ (not necessarily the same family and the same set as the given one), such that
the functions $f_\ell$ have a presentation of the form (\ref{f-as-sum}) and satisfying the properties 1), 3), and 4), and with $\sum (N_{\ell}-1)<N$, there exists a function $\varphi$ such that the property 2) holds as well. Then we want to prove the same for any such family and presentation with $\sum(N_\ell-1)=N$. The idea of the proof of the induction step is to increase the number of functions in the family without increasing the total number of terms in their presentations (\ref{f-as-sum}), and thus decrease
$\sum(N_\ell -1)$.
Note that the constant $d$ appearing in 2) will increase by
at most $1$ in each induction step, so that we actually obtain $d \le N$.

If $N = 0$, then all $N_\ell = 1$, and one is done, taking $\varphi=\varphi_0$ and $d = 0$. Indeed, if $N_\ell = 1$, then $|G_{\ell, 1,K,\psi}(x,r)|_{\CC}$ equals $|f_{\ell,K,\psi}(x,y)|_{\CC}$, and thus, if $N=0$, then  $U_{K,x,r}=W_{K,\psi,x,r}$.

For general $N>0$ we start by pulling out the factor $\psi(h_{\ell, 1})$ out of (\ref{f-as-sum}), i.e., we may assume that $h_{\ell, 1,K}=0$ for all $\ell$ and all $K$.
By Theorem \ref{cd} we may moreover suppose that for each $K$, $(x,r)\in V_{0,K}$, $\ell$, and each $i$, either $h_{\ell, i,K}(\varphi_0(x,y),\cdot)$ is constant on $U^0_{x,r}$, or, $h_{\ell, i,K}(x,\cdot)$ restricted to  $U^0_{x,r}$ has the $1$-Jacobian property.
Hence, for each $K$ and $(x,r)\in V_{0,K}$ there exist constants $b_{K,x,r,\ell,i}\in  K$ such that, for all $y_1,y_2\in U^0_{K,x,r}$ and all $\ell,i$, and with $z=\varphi_{0}(x,y_1)$,
\begin{eqnarray}
 \ord(h_{\ell,i,K}(z,y_1) - h_{\ell,i,K}(z,y_2))
 & = &
 \ord(b_{K,x,r,\ell,i}\cdot (y_1-y_2)), \label{jac1} \\
 \ac (h_{\ell,i,K}(z,y_1) - h_{\ell,i,K}(z,y_2))
 &=&
 \ac(b_{K,x,r,\ell,i}\cdot (y_1-y_2)) \label{jac2},
\end{eqnarray}
where $b_{K,x,r,\ell,1} = 0$ by a previous assumption.
If for all $K,\ell, i, x,r$, the function $h_{\ell,i,K}(x,\cdot)$ is constant modulo $(\varpi_K)$ on $U^0_{K,x,r}$, then, up to refining the function $\varphi_0$, Lemma \ref{avoidSkol} applied to each of the $h_{\ell,i,K}$ brings us back to the case  $N=0$.
Indeed, for each $h = h_{\ell,i}$ we refine $\varphi_0$ using the maps $\lambda$ and $\ac_m(h - h')$ (in the notation of Lemma \ref{avoidSkol});
after that, $\varphi_0$ determines $\psi \circ h$ and for each $\ell$, we can incorporate the entire sum (\ref{f-as-sum}) into a single $G_{\ell,1}$.

We may thus in particular assume that for each $K$ and each $(x,r)$ in $V_{0,K}$, there exist $\ell, i$ with $b_{K,x,r,\ell,i} \ne 0$. Choose $\gamma_{K,x,r} \in K$ with
\[
|\gamma_{K,x,r}|\cdot \max_{\ell,i} |b_{K,x,r,\ell,i}| = 1.
\]
For each $K$, $x$, $r$ and $\ell$, partition $\{1, \dots, N_\ell\}$ into non-empty subsets $S_{K,\ell, j}(x,r)$, $j \ge 1$, with the property that $i_1,i_2$ lie in the same part $S_{K,\ell, j}(x,r)$ for some $j$ if and only if
\begin{equation}\label{res-equal}
\res (\gamma_{K,x,r}b_{K,x,r,\ell,i_1})  = \res (\gamma_{K,x,r}b_{K,x,r,\ell,i_2}),
\end{equation}
where $\res:\cO_K\to k_K$ is the natural projection.
By cutting $U$ into finitely many pieces again,
we may assume
that the sets $S_{\ell, j}:=S_{K,\ell, j}(x,r)$ do not depend on $K$ nor on $(x,r)$.
Since $b_{K,x,r,\ell,1} = 0$, at least for one $\ell$ there are at least two different sets
$S_{\ell,j}, S_{\ell,j'}$.
Define for each $K, \psi, \ell, j$ and for $(x,y)\in U_K$
$$
f_{\ell, j,K,\psi}(x,y) := \sum_{i\in S_{\ell, j}} G_{\ell, i,K,\psi}(\varphi_0(x,y)) \psi(h_{\ell, i,K}(\varphi_0(x,y),y))
$$
and consider these functions $(f_{\ell, j})_{\ell,j}$ as a single family.
The total number of summands of the family $(f_{\ell, j})_{\ell,j}$ is the same
as for the functions $f_\ell$, but there are more functions $f_{\ell, j}$ than $f_j$, so
we can apply induction on $N$ to this family $(f_{\ell, j})_{\ell,j}$, with the extra conditions 3) and 4) for $\varphi_0$ as part of the desired properties.
Thus we find an integer $d\geq 0$, a definable surjection $\varphi:U\to V$ over $X$, definable functions $h_{\ell, j,i}:V \times \VF\to K$, and functions $G_{\ell, j,i}$ with properties 1), 2), 3) and 4) for $\varphi_0$ and for this family.

Let us write $U_{K,x,r}$ for the sets defined by $\varphi$ as in condition 2). Since
$\varphi_0=\theta\circ \varphi$ for some definable $\theta$, one has $U_{K,x,r}\subset U^0_{K,x,r'}$ for each $(x,r)$ and $(x,r')= \theta_{K}(x,r)$. 
By cutting $U$ into pieces as before, we may assume that, for each
$K$, $x$ and $r$,
not all $h_{\ell, i,K}(x,\cdot)$ are constant modulo $(\varpi_K)$ on $U_{K,x,r}$, since, as before, this would bring us back to the case $N=0$ for our original family $(f_\ell)_\ell$.

We will now show that the subset $M_{K,\psi,x,r}$ of $U_{K,x,r}$ consisting of those $y$ satisfying both inequalities
\begin{equation}\label{goal-iml}
\sup_{\ell,j,i} |G_{\ell, j,i,K,\psi}(x,r)|_{\CC}\leq \sup_{\ell,j} |f_{\ell, j,K,\psi}(x,y)|_{\CC} \leq \sup_\ell  |f_{\ell,K,\psi}(x,y)|_{\CC}
\end{equation}
has big volume in the sense that
\begin{equation}\label{vol-goal}
\Vol( U_{K,x,r}) \le q_K^{d+1}  \Vol(M_{K,\psi,x,r}).
\end{equation}

Once this is proved, we are done for our original family $(f_\ell)_\ell$ by replacing $d$ with $d+1$ while keeping the data of the $\varphi$, $G_{\ell, j,i}$, and $h_{\ell, j,i}$.

Thus, to finish the proof, we fix $K$, $\psi$, $x$ and $r$ and it remains to
show that $M_{K,\psi,x,r} $ as given by (\ref{goal-iml}) has the property (\ref{vol-goal}).
Consider the partition of the ball $U_{K,x,r}$ into the balls $B_\xi$ of the form $\xi+\gamma_{K,x,r}\cO_K$.
(The ball $U_{K,x,r}$ is indeed a union of such balls $B_\xi$ by our choice of $\gamma_{K,x,r}$ since there exists a $h_{\ell, i,K}(x,\cdot)$ that is non-constant modulo $(\varpi_K)$
on $U_{K,x,r}$.) Firstly we will show that $|f_{\ell, j,K,\psi}(x,\cdot)|_{\CC}$ is constant on each such
$B_\xi$. Secondly we will show that for each such $B_\xi$ there is a sub-ball $B'_\xi \subset B_\xi$ with $\Vol(B_\xi) = q_K\cdot \Vol(B'_\xi)$ and such that
the second inequality of (\ref{goal-iml}) holds for all $y\in B'_\xi$. These two facts together with the previous application of the induction hypothesis imply (\ref{vol-goal}) and thus finish the proof for $m=1$. Fix $B_\xi\subset U_{K,x,r}$
and write
$y = \xi + \gamma_{K,x,r}y' \in B_\xi$ for $y' \in \cO_K$.
By (\ref{jac1}), (\ref{jac2}), and (\ref{res-equal}), for each $\ell$ and $j$ there is a constant
$c_{\ell, j} \in \CC$ such that
\[
f_{\ell, j,K,\psi}(x,y) = c_{\ell, j} \psi(b'_{K,\ell, j}y'),
\]
where we can take $b'_{K,\ell, j} = \gamma_{K,x,r}b_{K,x,r',\ell,i}$ for any $i \in S_{\ell, j}$ where $r'$ is such that $U_{x,r}\subset U^0_{x,r'}$.
This shows that $|f_{\ell, j,K,\psi}(x,\cdot)|_{\CC}$ is constant on
$B_\xi$. We now only have to construct $B'_\xi$.
By renumbering, we can suppose that on $B_\xi$,
$|f_{1,1,K,\psi}|_{\CC}$ is maximal among the $|f_{\ell, j,K,\psi}|_{\CC}$,
so that the middle expression of (\ref{goal-iml}) is equal to $|f_{1,1,K,\psi}|_{\CC}$.
In particular, it suffices to choose $B'_\xi$ such that $|f_{1,1,K,\psi}(x,y)|_{\CC} \le |f_{1,K,\psi}(x,y)|_{\CC}$
for all $y \in B'_\xi$.
 Now let $\psi_q$ be the additive character of $\FF_{q_K}$ satisfying
$\psi(y') = \psi_q(\res(y'))$ for $y' \in \cO_K$.
By (\ref{res-equal}), we have $\res(b'_{K,1,j}) \ne \res(b'_{K,1,j'})$ for each $j \ne j'$,
so we can apply Corollary 3.5.2 of \cite{CGH} (which relates a function on $\FF_{q_K}$ to its Fourier transform) to
\[
\tilde{f}:\FF_{q_K} \to \CC : \tilde{y} \mapsto \sum_{j} c_{1,j} \psi_q(\res(b'_{K,1,j})\cdot \tilde{y})
\]
and get a $\tilde{y}_0 \in \FF_{q_K}$ with
$|c_{1,1}|_{\CC} \le |\tilde{f}(\tilde{y}_0)|_{\CC}$.
Set $B'_{\xi} := \{\xi + \gamma_{K,x,r} y' \mid y' \in\res^{-1}(\tilde{y}_0)\}$.
Since
$f_{1,K,\psi}(x,y) = \tilde{f}(\res(y'))$ and
$|f_{1,1,K,\psi}|_{\CC} = |c_{1,1}|_{\CC}$, we are done.
\end{proof}

\begin{proof}[Proof of Proposition \ref{IML} for $m>1$]
We proceed by induction on $m$.
Denote $(y_1,\ldots,y_{m-1})$ by $\hat y$.
Apply the $m=1$ case using $(x,\hat y)$ as parameters and
$y_m$ as the only $y$-variable.
This yields in particular an integer $d_1>0$, a surjection $\varphi_1:U
\to V_1$, 
and an expression of each $f_\ell$ as a sum of terms of the form
$G_{1}(\varphi_1(x,y))\psi(h_1(\varphi_1(x,y),y_m))$, where we omit the indices
$\ell, i$ to
simplify notation.

Now apply the induction hypothesis to the collection of functions
$G_{1}$,
this time using $\hat y$ as the $y$-variables, and the variables $(x,
r_1)$ as parameters running over $V_1$.
This yields an integer $d_2$, a surjection $\varphi_2:V_1\to V_2$
and an expression of each $G_1$ as a sum of terms of the form
$G_{2}(x,r)\psi(h_2(x,y,r))$, where $\varphi_2(\varphi_1(x,y))=(x,r)$.

Now define $\varphi$ as $\varphi_2\circ \varphi_1$ and $d=d_1 + d_2$.
Then 1) is satisfied and 2) also follows easily.
\end{proof}

\subsection{Expansions}\label{sec:expan}

All results, statements and definitions of Section \ref{sec:i} except Theorem \ref{badlocus} hold when one consequently replaces the meaning of definable by subanalytic, that is, one replaces $\gLPas$ by an enrichment obtained by adding some analytic structure as in \cite{CLip} to $\gLPas$. (Theorem \ref{badlocus} is about Zariski closed sets and becomes different and more technical in the subanalytic case, where one has to use systems of power series that can be interpreted as converging analytic functions on $\cO_K^n$ when $K$ in $\Locp$, see~\cite{CLip}.) Similarly, enriching $\gLPas$ by putting arbitrary additional structure on the residue ring sorts $\Res_n$ does not impair the results of Section \ref{sec:i}. Also, one can add constants for a ring of integers $\cO$ of a number field to $\gLPas$ in the sort $\VF$ and work uniformly in all finite field extensions of completions of the fraction field of $\cO$, see the appendix of \cite{CGH5} for details. The justification for these claims is that the results of Section \ref{sec:qe} can easily be adapted to such enrichments of $\gLPas$.

\section{Quantifier elimination and related results}
\label{sec:qe}

This section contains the key technical results. The novelty lies in the combination of removing quantifiers over the valued field and over the value group variables without restrictions on the ramification degree. The proof consists of replacing a quantifier over the value group $\VG$ by a quantifier bounded to some segment in $\VG$ and then replace such a bounded quantifier by a quantifier over a residue ring. The elimination of valued field quantifiers only is more classical and can be proved in the line of \cite{Pas2}, see e.g.~\cite{Basarab} and the variants in \cite{Rid} where this is done using model theoretic methods, and \cite{Flen} where this is done in the line of Cohen's method of \cite{Cohen}.

The main aspect in which the results here are different than the ones restricting to large residue field characteristic, is that one needs reparameterizations by the sorts $\RF_n$ (as in Definition \ref{param}) to get things working in the value group, while previously only finite partitions into definable parts were needed to exploit properties of the value group. Reparameterizations by the residue field were already needed in the more classical case from Pas \cite{Pas} on to understand subtle information about the valued field in terms of the residue field via cell decomposition. So, in some sense,
here we just need to reparameterize more often and into deeper residue rings. This is the reason why many results, as e.g.~Theorem \ref{p2pexpmot} above,  go through as before.

\subsection{Quantifier elimination and orthogonality}

Let us write $\LPres$ for the Presburger language which consists of the symbols $+$, $\le$, $0$, $1$, and for each $d=2,3,4,\dots$, a symbol $\equiv_d$ to denote the binary
relation $x\equiv y \bmod d$, which means
$$
(\exists z) dz = x-y,
$$
with $dz$ standing for $\sum_{i=1}^d z$.
The element $1$ is interpreted in an ordered abelian group as the minimal positive element if there is such an element, and, by convention in this paper, as $0$ otherwise.
Note that with this meaning, $\LPres$ is a definitional expansion of the language of ordered abelian groups, and it makes sense in any ordered abelian group.

Let us denote by $\gLPas'$ the (definitional) expansion of $\gLPas$ given by putting the language $\LPres$ on the value group (it suffices to add the symbols $1$ and $\equiv_d$ since the other symbols are already there), and, for each integers $n>0$ dividing $m>0$, relation symbols $A_n$ for subsets of $\Res_n$, and function symbols $\res_n: \VF\to \Res_n$,
 $\res_{m,n}:\Res_m\to \Res_n $, and $\cross_n : \VGinf \to \Res_n$.

An $\gLPas$-structure $L$ naturally extends to an $\gLPas'$-structure: the maps $\res_n$ and $\res_{m,n}$ are as in Section \ref{genval}, the set $A_{n,L}$ consists of the image under $\res_n$ of the elements in $\cO_L$ with $\ac_n(x)=1$, and, for any $n>0$, the map $\cross_n : \VGinf{}_L\to \RF_{n,L}$ sends $\gamma\in \VG_L$ to $\res_n(x)$ for any $x\in L$ with $\ac_n(x)=1$ and $\ord(x)=\gamma$ (in particular, $\cross_n(\gamma) = 0$ for $\gamma < 0$), and sends $+\infty$ to $0$.

Let us write $\gTPas'$ for the corresponding $\gLPas'$-theory.

The following result by S.~Rideau in \cite{Rid} (as a variation on results by Basarab in \cite{Basarab}) is obtained in loc.~cit.~from quantifier elimination in a closely related language (with so-called leading term structures or $\rv$-structure). Alternatively, one can note that the proofs of Pas \cite{Pas2} or of Flenner \cite{Flen} (both similar to the Cohen-Denef method \cite{Cohen}, \cite{Denef2}) can be adapted to yield direct proofs of the following quantifier elimination result.

\begin{thm}[\cite{Rid}]\label{QERid}
The theory $\gTPas$ eliminates valued field quantifiers in the language $\gLPas'$, even resplendently, relatively to the sorts $\VGinf$ and $\Res_n$ for $n>0$.
\end{thm}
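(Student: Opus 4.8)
The plan is to deduce Theorem~\ref{QERid} from a quantifier elimination result in a language with leading-term structure, following \cite{Rid}. First I would recall the \emph{leading term} or $\rv$-language: for a valued field $L$ one forms, for each $n>0$, the group $\RV_{n,L} := L^\times / (1 + n\cM_L)$ together with the natural map $\rv_n : L^\times \to \RV_{n,L}$ (extended by $0$ at $0$), and notes that $\RV_{n,L}$ fits in a short exact sequence $1 \to \RF_{n,L}^\times \to \RV_{n,L} \to \VG_L \to 0$. The point is that, in the presence of a compatible system of angular components, the datum of all the $\rv_n$ is \emph{interdefinable} with the datum of $\ord$ together with all the $\ac_n$: one recovers $\ord$ and $\ac_n$ from $\rv_n$ using the splitting provided by $\ac_n$, and conversely $\rv_n(x)$ is determined by the pair $(\ord x, \ac_n(x))$. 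So an $\gLPas'$-structure is the same thing as a structure in the $\rv$-language (enriched by the Presburger structure on $\VG$ and the subsets/maps among the $\RF_n$), and one only has to translate Rideau's statement.

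Next I would invoke the main theorem of \cite{Rid}: the theory of henselian valued fields of residue characteristic zero\,---\,or rather, of equicharacteristic-zero-in-the-appropriate-sense henselian valued fields, which is exactly what $\gTPas$ axiomatizes\,---\,eliminates valued field quantifiers in the $\rv$-language, and does so resplendently (i.e.\ the elimination persists after adding arbitrary extra structure on the $\RV_n$ sorts, equivalently on the $\VGinf$ and $\Res_n$ sorts). Concretely, every $\gLPas'$-formula $\varphi(x;\xi)$ with valued-field variables $x$ and auxiliary-sort variables $\xi$ is equivalent, modulo $\gTPas'$, to a formula of the form $\tilde\varphi(\rv_{n_1}(p_1(x)),\dots,\rv_{n_k}(p_k(x)); \xi)$ where the $p_j$ are polynomials over $\ZZ$ (or over the prime ring) in the $x$-variables and $\tilde\varphi$ is a formula in the auxiliary sorts only. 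The resplendency is what lets us keep the relation symbols $A_n$ and the maps $\res_{m,n}$, $\res_n$, $\cross_n$ as ``black boxes'' on the auxiliary sorts without disturbing the elimination; it is also what will later permit the enrichments discussed in Section~\ref{sec:expan}.

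Finally I would carry out the dictionary step in detail: rewrite each $\rv_{n_j}(p_j(x))$, using the compatible angular components, in terms of $\ord(p_j(x))$ and $\ac_{n_j}(p_j(x))$, both of which are $\gLPas'$-terms in $x$ with values in the pure auxiliary sorts. Substituting these into $\tilde\varphi$ produces an $\gLPas'$-formula with no valued-field quantifiers that is $\gTPas'$-equivalent to $\varphi$; this is precisely the assertion of Theorem~\ref{QERid}. I expect the main obstacle to be purely expository rather than mathematical: one has to be careful that the interpretation of the $\rv_n$-language matches $\gTPas$ on the nose (in particular that ``henselian of characteristic zero with arbitrary residue characteristic'' is the correct hypothesis under which Rideau's elimination applies, and that the auxiliary structure Rideau allows on $\RV_n$ is rich enough to encode $\cross_n$, the $A_n$, and the Presburger symbols on $\VG$), and that the interdefinability of $\{\rv_n\}$ with $\{\ord,\ac_n\}$ is genuinely bi-interpretable as structures, not merely a pointwise correspondence. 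The alternative route\,---\,adapting the Cohen--Denef--Pas proof of \cite{Pas2}, \cite{Flen}, \cite{Cohen}, \cite{Denef2} directly\,---\,would avoid the translation but require reproving a cell-decomposition-style induction; since that machinery is developed independently later in Section~\ref{sec:qe} anyway, it is cleaner here to cite \cite{Rid}.
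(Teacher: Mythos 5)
Your proposal is correct and follows essentially the same route as the paper: Theorem~\ref{QERid} is a cited result, and the paper's only ``proof'' is the surrounding discussion, which likewise points to Rideau's quantifier elimination in the leading-term ($\rv$) language, remarks that his language is slightly different from $\gLPas'$ but has the same definable sets, and notes the alternative of adapting the Cohen--Denef--Pas-style arguments of Pas and Flenner. Your write-up just makes the translation step (interdefinability of $\{\rv_n\}$ with $\{\ord,\ac_n\}$ in the presence of compatible angular components, and the role of resplendency in absorbing $A_n$, $\cross_n$ and the Presburger symbols) more explicit than the paper does.
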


For more context on `resplendent quantifier elimination' we refer to \cite{Rid} but let us recall that it means in Theorem \ref{QERid} that for any expansion $\cL$ of $\gLPas'$ which adds new language symbols only involving variables of the sorts $\RF_n$ and $\VGinf$, the expansion $\cL$ still eliminates valued field quantifiers.
Rideau \cite{Rid} uses, among other things, a slightly different language than $\gLPas'$, but with the same definable sets.

\par

We give an addendum to Theorem \ref{QERid} to eliminate also $\VGinf$-quantifiers for two kinds of value groups.

\par
Write $\TPres$ for the Presburger theory, namely, the $\LPres$-theory of $\ZZ$.
Write $\Tdoag$ for the theory of divisible ordered abelian  groups. By $\gTPas\cup \TPres$ we denote the theory $\gTPas$ together with $\TPres$ in the value group sort and likewise for the theories $\gTPas\cup \Tdoag$, $\gTPas'\cup \TPres$ and $\gTPas'\cup \Tdoag$.
\begin{thm}\label{QEZ}
The theories $\gTPas'\cup \TPres$ and $\gTPas'\cup \Tdoag$ eliminate quantifiers over the valued field and over the value group, in the language $\gLPas'$.
\end{thm}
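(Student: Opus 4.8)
The plan is to deduce Theorem~\ref{QEZ} from the valued-field quantifier elimination of Theorem~\ref{QERid} together with an additional elimination of value-group quantifiers, following the blueprint announced at the start of this section: a quantifier over $\VG$ is first replaced by a quantifier bounded to a segment of $\VG$ of controlled width, and that bounded quantifier is then traded for a quantifier over a residue ring $\Res_n$. First I would invoke Theorem~\ref{QERid} in its resplendent form: the new symbols of $\gLPas'$ only enrich the structure on $\VGinf$ and the $\Res_n$ and are definitional over $\gTPas$, so $\gTPas'\cup\TPres$ and $\gTPas'\cup\Tdoag$ already eliminate $\VF$-quantifiers in $\gLPas'$. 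By the usual reduction for relative quantifier elimination — and since quantifiers over the sorts $\Res_n$ may remain — it then suffices to eliminate a single existential quantifier $\exists\gamma$ over $\VGinf$ from a formula $\theta$ whose other quantifiers range over the $\Res_n$. Replacing terms $\ord(x),\res_n(x),\ac_n(x)$ on free valued-field variables by fresh variables of the appropriate sorts, splitting off the case $\gamma=+\infty$, and pulling residue-ring existentials outward, we reduce to $\exists\gamma\,\theta$ with $\gamma$ running over $\VG$ and $\theta$ quantifier-free (the residue-ring-quantified subformulas being treated as atoms once $\gamma$'s coupling to them is recorded through a single new variable, as below).

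Next I would isolate the role of the maps $\cross_n$. In $\theta$, $\gamma$ occurs either inside $\VGinf$-atoms of the ambient arithmetic theory ($\LPres$-atoms, resp.\ $\Tdoag$-atoms) or inside terms $\cross_m(a\gamma+c)$ sitting in $\Res_m$-atoms. I would record the needed properties of $\cross_n$: it is additive-to-multiplicative, $\cross_n(\delta+\delta')=\cross_n(\delta)\cdot\cross_n(\delta')$ for $\delta,\delta'\ge 0$ (even when $\delta+\delta'>\ord(n)$, both sides then being $0$); it vanishes on $\{\delta<0\}$ and on $\{\delta>\ord(n)\}$, where $\ord(n)$ is the closed $\VGinf$-term $\ord(1+\dots+1)$; for $m\mid n$ one has $\cross_m=\res_{m,n}\circ\cross_n$; and, crucially, $\cross_n$ restricts to a \emph{bijection} of the segment $[0,\ord(n)]$ onto $A_n\setminus\{0\}$ (injectivity since for $0\le\delta<\delta'\le\ord(n)$ the difference of representatives has valuation $\delta\le\ord(n)$, hence is not in $n\cM$; the image is identified with $A_n\setminus\{0\}$ by directly computing $\res_n$ on elements of non-negative valuation with $\ac_n=1$). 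After enlarging $n$ so that every index $m$ occurring in $\theta$ divides it, each $\cross_m(a\gamma+c)$ rewrites — after finitely many case splits on the position of $a\gamma$ and $c$ relative to $0$ and the $\ord(m)$, themselves $\VGinf$-atoms — as a $\Res_n$-term in one fresh $\Res_n$-variable $\eta$ subject to the coupling $\eta=\cross_n(\gamma)$, plus parameters. Thus we reduce to eliminating $\exists\gamma$ from $\exists\gamma\,(\Phi_{\mathrm{vg}}(\gamma,\bar\mu)\wedge\eta=\cross_n(\gamma))$ with $\Phi_{\mathrm{vg}}$ a Boolean combination of arithmetic atoms in $\gamma$, the other $\VGinf$-variables $\bar\mu$, and the terms $\ord(m)$.

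Now comes the case split. Refine according to the finitely many sign patterns of all arithmetic atoms, and of the arguments $a\gamma+c$ occurring via $\cross$, relative to $0$ and the $\ord(m)$. On a given piece, either (i) $\eta=\cross_n(\gamma)$ is forced to be $0$ (when $\gamma<0$ or $\gamma>\ord(n)$ there) and the piece is defined purely arithmetically, so $\exists\gamma\,\Phi_{\mathrm{vg}}$ restricted to it is eliminated outright by Presburger quantifier elimination in the $\TPres$ case and by quantifier elimination for divisible ordered abelian groups in the $\Tdoag$ case; or (ii) $\gamma$ is confined to a definable segment $[\ell,\ell+w]$ with $w\le\ord(n)$ (which is why $n$ was taken divisible by all occurring indices), so $\delta:=\gamma-\ell\in[0,\ord(n)]$, where $\cross_n$ is a bijection onto $A_n\setminus\{0\}$. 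In case (ii) one trades the bounded quantifier $\exists\delta\in[0,w]$ for a quantifier $\exists\eta\,(A_n(\eta)\wedge\eta\ne 0\wedge\dots)$ over $\Res_n$, re-expressing every remaining atom on $\delta$ through $\eta=\cross_n(\delta)$: divisibilities $\delta\equiv_d e$ (in the $\TPres$ case, after a case split on the residue of $e$ modulo $d$) turn into conditions asserting that $\eta$ is a suitable power inside $A_n\setminus\{0\}$, and order atoms turn into conditions of the form $\exists\zeta\,(A_n(\zeta)\wedge\zeta\ne 0\wedge\zeta\cdot\eta^{a}=t)$ for suitable $\Res_n$-terms $t$; the equivalences are forced because $\cross_n$ vanishes above $\ord(n)$, so the side condition $\eta\ne 0$ automatically keeps the relevant products inside the faithful segment. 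The $\Tdoag$ case is lighter: no congruences, and divisions are always available.

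The step I expect to be the main obstacle is carrying out the translations of case (ii) uniformly and correctly in full generality. Reducing an atom $a\delta+c\bowtie 0$ with $|a|>1$, and with $c$ an unbounded parameter, to a residue-ring condition via $\eta=\cross_n(\delta)$ is delicate precisely because multiplying $\delta$ by $a$ or adding $c$ can push one outside the segment $[0,\ord(n)]$ on which $\cross_n$ is faithful; handling this cleanly will need a careful preliminary normal form for $\theta$ (arranging the arguments of $\cross$ and the atoms in $\gamma$ to have a controlled shape), possibly a further enlargement of $n$ governed by the coefficients occurring, and some induction bookkeeping the finitely many pieces. This is also exactly the point where only a weak form of orthogonality between the value group and the residue rings survives, and where the bijection $\cross_n\colon[0,\ord(n)]\to A_n\setminus\{0\}$ is the device that makes the passage from $\VG$ to $\Res_n$ possible at all.
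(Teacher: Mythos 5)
Your overall strategy coincides with the paper's: eliminate $\VF$-quantifiers via Theorem~\ref{QERid}, reduce to a single $\exists y$ over the value group in a formula whose only other data live in $\VG$ and the residue rings, bound that quantifier to a segment of width $\ord(n)$ by case-splitting on whether the arguments of the $\cross$-maps lie in $[0,\ord(n_i)]$, and finally trade the bounded quantifier for a $\Res_n$-quantifier over the image of $\cross_n$, which is definable via $A_n$. Up to and including the reduction to a bounded quantifier, your argument is sound and matches the paper.

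The genuine gap is exactly the step you flag as "the main obstacle", and it is the crux of the theorem, not a routine verification. Two problems are left unresolved. First, your coupling variable is introduced as $\eta=\cross_n(\gamma)$ \emph{before} you recentre to $\delta=\gamma-\ell$; if $\ell$ is large then $\cross_n(\gamma)=0$ identically on the segment and $\eta$ carries no information, so the subsequent translation of atoms "through $\eta=\cross_n(\delta)$" is working with a different variable than the one you actually introduced. The recentering (the paper's substitution $y\mapsto y-\alpha(z)$) must happen first. Second, and more seriously, rewriting $\cross_{m}(a\gamma+c)$ as a residue-ring term requires splitting the argument into a part depending on $\gamma$ and a part depending on the parameters, and then knowing that the \emph{parameter} part is itself confined to a segment of length $O(\ord(n))$ --- otherwise its $\cross$-image is $0$ and determines nothing. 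You never establish this. The paper does, in three concrete moves you do not supply: (a) to normalize the coefficient $a$ of $y$ to $1$, it replaces $\cross_{n_i}(t_i(y,z))$ by $\cross_{n_i^a}(a\,t_i(y,z))$ and reconstructs the original value by taking $a$-th roots inside $A_{n_i^a}$ followed by $\res_{n_i^a,n_i}$, then substitutes $y'=ay$ together with the congruence $y'\equiv 0\bmod a$; (b) it observes that the two bounds $0\le y\le\ord(n)$ and $0\le t_i(y,z)\le\ord(n_i)$ jointly force $t'(z)$ into a bounded window, so that $\cross_{n_i}(by+t'(z))$ is a residue-ring-definable function of $\cross_n(y)$ and $\cross_{m}(t'(z)+\ord(m'))$; (c) it applies Presburger cell decomposition to write $\theta(y,z)$ as $\theta_0(z)\wedge\beta_1(z)\le cy\le\beta_2(z)\wedge y\equiv c\bmod\ell$, notes $0\le\beta_j(z)\le c\,\ord(n)$, and feeds these constraints into the residue rings via $\cross_{n^c}(\beta_j(z))$, after which $y$ occurs only inside $\cross_n(y)$. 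Your proposed translations of order atoms as $\exists\zeta\,(A_n(\zeta)\wedge\zeta\ne 0\wedge\zeta\cdot\eta^{a}=t)$ presuppose that such a term $t$ encoding the parameter side exists, which is precisely point (b); without it the argument does not close. Since you explicitly defer this step rather than carry it out, the proof is incomplete at its essential point.
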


\begin{remark}\label{rem:expansions}
Both the theories from Theorem \ref{QEZ} even resplendently eliminate valued field and value group quantifiers in $\gLPas'$, relatively to the sorts $\Res_n$, where resplendent relative to the $\Res_n$ means that new language symbols can be introduced only involving variables of the sorts $\RF_n$, $n>0$. An analytic structure from \cite{CLip} can also be joined to the language, with similar results.
\end{remark}

\begin{proof}[Proof of Theorem \ref{QEZ}]
We prove the result for $\gTPas'\cup \TPres$. The proof for $\gTPas'\cup \Tdoag$ is similar.

In the proof, we will slightly abuse notation when speaking about $\LPres$-formulas:
We will consider the constant terms of the form $\ord(n)$ for integers $n>0$
as $\LPres$-terms and they may as such appear in $\LPres$-formulas.


The first step is to use Theorem \ref{QERid} to reduce the problem to eliminating $\VGinf$-quantifiers from formulas having variables only of the value group sort and
of the residue ring sorts. Indeed, by that theorem (and by syntactical
considerations), every $\gLPas'$-formula is equivalent to a boolean combination
of formulas of the form
\[
\varphi((\ord p_i(x))_i, (\ac_{n_i}(p_i(x)))_i, z, \xi),
\]
where $x$ is a tuple of $\VF$-variables,
$p_i$ are polynomials with integer coefficients, $z$ is a tuple of $\VGinf$-variables, $\xi$ is a tuple of variables each of which runs over a residue ring, and $\varphi$ is a formula living only
in the value group and in the residue rings.

As usual, it suffices to eliminate a single existential quantifier, and we can assume that other value group quantifiers
have already been eliminated. Using some more syntactical arguments, it suffices to
eliminate $\exists y$ from formulas of the form
\begin{equation}\label{varphiy}
\exists y\, \big( \theta(y,z) \wedge \psi\left(\xi, \cross_{n_1}(t_1(y,z)),\ldots, \cross_{n_k}(t_k(y,z))\right) \big),
\end{equation}
for some $\LPres$-terms $t_i$, some $k\geq 0$, some $n_i\geq 0$, and where
$y$ is a value group variable, $z$ is a tuple of value group variables, $\xi$ a tuple of variables each of which runs over a residue ring,
$\theta$ is an $\LPres$-formula and $\psi$ is a formula on the residue ring sorts.

We may assume that $y$ appears non-trivially in each $t_i$
(since otherwise, we can replace $\cross_{n_k}(t_i(z))$ by a new $\RF_{n_k}$ variable and eliminate $\exists y$ from the resulting formula).

\medskip

The second step consists in reducing to the case where $\theta(y, z)$ implies that for each $i$, $t_i(y,z)$ lies in $[0, \ord(n_i)]$.
This is achieved by introducing (into $\theta(y, z)$) a case distinction,
for each $i$, on whether $t_i(y,z)$ lies in $[0, \ord(n_i)]$ or not.
We then treat each case separately, and whenever $t_i(y,z)$ does not lie in the interval,
$\cross_{n_i}(t_i(y,z))$ can be replaced by $0$.

In the case where all $t_i$ disappear in this way, we can then eliminate $\exists y$ from $\exists y\, \theta(y,z)$ using Presburger quantifier elimination which works even with our extra constant symbols for the values $\ord(n)$ for integers $n>0$. Indeed, quantifier elimination is preserved under adding constant symbols.
Otherwise, the new $\theta(y, z)$ in particular implies
\begin{equation}\label{eq.bound}
0 \le ay + \alpha(z) \le \ord(n_1),
\end{equation}
where $ay + \alpha(z) = t_1(y,z)$,
the integer $a$ is a non-zero (by our assumption that each $t_i$ does depend on $y$ non-trivially), and where $\alpha(z)$ is an $\LPres$-term.
In other words, our quantifier over $y$ now is bounded.

\medskip

Our next goal is to simplify the bound (\ref{eq.bound}) on $y$ to one of the form
\begin{equation}\label{eq.bound2}
0 \le y \le \ord(n)
\end{equation}
(i.e., we want $\theta$ to imply (\ref{eq.bound2}) for some $n$, possibly after some change of variables and other manipulations).
To this end, first, we may assume $a \ge 1$ in (\ref{eq.bound}) (by otherwise turning it around and adapting $\alpha$).
Now we replace $\cross_{n_i}(t_i(y,z))$ by $\cross_{n_i^a}(at_i(y,z))$ and modify $\psi$ to
reconstruct $\cross_{n_i}(t_i(y,z))$ from this (by taking the $a$-th root in
$A_{n_i^a}$ and then the image in $\RF_{n_i}$ under $\res_{n_i^a,n_i}$).

In this way, the $y$-coefficients in all $t_i$ become divisible by $a$. We now replace $ay$ by $y'$ in each $t_i$, we replace $\theta(y,z)$ by a formula equivalent to
\[
y' \equiv 0 \mod a \ \ \wedge\ \  \theta(\frac{y'}{a}, z),
\]
and we replace $\exists y$ by $\exists y'$. This modification replaces $a$ by $1$ in (\ref{eq.bound}). Finally,
we get rid of the $\alpha(z)$ in (\ref{eq.bound}) by replacing $y$ by $y - \alpha(z)$ everywhere, and adapting the $t_i$ and $\theta$ correspondingly.

\medskip

Fix $i$ and write $t_i(y,z)$ as $by + t'(z)$ where $b$ is a positive integer and $t'(z)$ an $\LPres$-term. The bounds $0 \le y \le \ord(n)$ and $0 \le t_i(y,z) \le \ord(n_i)$ implied by $\theta$
also imply bounds on $t'(z)$ of a similar kind. This means that for some suitable integers $m, m' \ge 1$,
we can focus on $\cross_m(t'(z) + \ord(m'))$  and on $\cross_n(y)$ instead of on $\cross_{n_i}(t_i(y,z))$, in the sense that there exists a function definable purely in the residue rings
sending $(\cross_n(y), \cross_m(t'(z) + \ord(m')))$ to $\cross_{n_i}(t_i(y,z))$, whenever $\theta(y, z)$ holds.

By applying this for all $i$, we can replace $\psi\left(\xi, (\cross_{n_i}(t_i(y,z)))_i\right)$
by a formula of the form
\begin{equation}\label{cross-n}
\psi'\left(\xi,\ \cross_n(y),\ \ \cross_{m_1}(t'_1(z)),\ldots, \cross_{m_k}(t'_k(z))\right),
\end{equation}
where $\psi'$ lives only in the residue rings, that is, $\psi'$ involves variables only running over $\RF_n$ for some $n>0$ and no variables running over $\VF$ neither $\VG$.

\medskip

Using Presburger quantifier elimination and Presburger cell decomposition, $\theta(y,z)$ can be (piecewise) written in the form
\begin{equation}\label{theta0}
\theta_0(z)\ \wedge\ \underbrace{\beta_1(z) \le cy \le \beta_2(z)\ \wedge\ y\equiv c\bmod \ell}_{(*)}
\end{equation}
for some quantifier free $\LPres$-formula $\theta_0$, some integers $c,\ell \ge 1$ and some $\LPres$-terms $\beta_1, \beta_2$.
Using that $\theta(y,z)$ implies $0 \le y \le \ord(n)$, we obtain that $\theta_0(z)$ implies $0 \le \beta_j(z) \le  c \ord(n) $ for $j=1,2$.
This means that $(*)$ can be incorporated into $\psi'$, after adding
$\cross_{n^c}(\beta_1(z)))$ and $\cross_{n^c}(\beta_2(z)))$ as input to $\psi'$.
Now the only place where $y$ appears in the entire formula is the
$\cross_n(y)$ in (\ref{cross-n}), so the quantifier over $\VGinf$ can be replaced by a quantifier in $\RF_n$, running over the image of $\cross_n$,
which is definable without $\VGinf$-quantifiers using the relation symbol $A_n$ from $\gLPas'$.
\end{proof}

From the above quantifier elimination, one obtains the following more precise description of formulas, which
can serve as a replacement for orthogonality of the residue field and the value group; the argument is
the same as in the usual Denef--Pas QE setting. From this, we then get, also in the usual way,
a strong from of stable embeddedness of some collections of sorts (Corollary~\ref{stab-emb}).


\begin{thm}
\label{weakortho}
Any $\gLPas'$-formula in free variables $x,\xi,z$ in the valued field, the residue rings, resp.\ the value group, is $\gTPas'\cup \TPres$-equivalent, to a finite disjunction of formulas of the form
\begin{equation}\label{varphiy2}
\Theta\big(z,(\ord p_i(x))_i \big) \wedge \Phi\big(\xi,(\ac_{n}(p_i(x)))_i,(\cross_{n}(t_j((\ord p_i(x))_i, z))  )_j\big) ,
\end{equation}
where $\Phi$ is a formula on the residue ring sorts, $\Theta$ a quantifier free $\LPres$-formula, $k,\ell\geq 0$, $n\geq 1$, the $t_j$ for $j=1,\ldots, k$ are $\LPres$-terms, and the $p_i$ for $i=1,\ldots, \ell$ are polynomials in $x$. The same statement also holds for $\gTPas'\cup \Tdoag$ instead of $\gTPas'\cup \TPres$.
\end{thm}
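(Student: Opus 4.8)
The plan is to obtain Theorem~\ref{weakortho} as a direct consequence of the quantifier elimination of Theorem~\ref{QEZ}, by the standard manipulations that turn a Denef--Pas type quantifier elimination into a weak orthogonality normal form. The only genuinely mathematical input beyond Theorem~\ref{QEZ} is the identity $\res_n(y)=\ac_n(y)\cdot\cross_n(\ord y)$, valid in every generalized Denef--Pas structure (both sides are $0$ unless $\ord y\geq 0$ and $y\neq 0$, in which case it follows by writing $y=ux$ with $u\in\cO_L^\times$, $\ac_n(x)=1$ and $\ord x=\ord y$), together with the compatibilities $\ac_n=\res_{nm,n}\circ\ac_{nm}$ and $\cross_n=\res_{nm,n}\circ\cross_{nm}$; everything else is syntactic bookkeeping. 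The first step is to apply Theorem~\ref{QEZ} (in the version for $\gTPas'\cup\TPres$, resp.\ $\gTPas'\cup\Tdoag$) to replace the given formula by an equivalent $\gLPas'$-formula without quantifiers over $\VF$ and without quantifiers over $\VGinf$; quantifiers over the residue ring sorts $\Res_n$ may remain, which is harmless since $\Phi$ in (\ref{varphiy2}) is allowed to be an arbitrary formula on the residue ring sorts. The result is a Boolean combination of atomic formulas and of residue-ring-quantified subformulas.

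Next I would normalize the terms occurring in these subformulas. Every $\VF$-term is a polynomial $p(x)\in\ZZ[x]$, and the only function symbols out of $\VF$ are $\ord$, $\ac_n$ and $\res_n$; replacing each $\res_n(p_i(x))$ by $\ac_n(p_i(x))\cdot\cross_n(\ord p_i(x))$, the variables $x$ enter only through the value group terms $\ord p_i(x)$ and the residue ring terms $\ac_n(p_i(x))$. Using the function symbols $\res_{nm,n}$ of $\gLPas'$ one arranges that a single modulus $n$ (a common multiple of all those occurring) serves for all the $\ac$'s and $\cross$'s. Then one introduces, for each $p_i$, a case distinction on $p_i(x)=0$: since $p_i(x)=0\iff\ac_n(p_i(x))=0$, this is a residue-ring condition; in the branch where $p_i(x)=0$ one substitutes $+\infty$ for $\ord p_i(x)$, which collapses every $\cross_n$ of a term genuinely involving it to $0$ and decides every value group atom containing it, so that in each branch all surviving value group terms and all arguments of $\cross_n$ are honest $\LPres$-terms in $z$ and the finite $\ord p_j(x)$ (allowing the constants $\ord(m)$ as $\LPres$-terms, exactly as in the proof of Theorem~\ref{QEZ}).

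After this normalization the only symbol connecting the value group to the residue rings is $\cross_n$, so each atomic (or residue-ring-quantified) subformula is of one of two types: a quantifier-free $\LPres$-formula in $z$ and $(\ord p_i(x))_i$ --- a ``$\Theta$-atom'' --- or a formula on the residue ring sorts, built from equalities, the relation symbols $A_n$ and residue-ring quantifiers, in the variables $\xi$, the $\ac_n(p_i(x))$ and terms $\cross_n(t_j((\ord p_i(x))_i,z))$ with $t_j$ an $\LPres$-term --- a ``$\Phi$-atom'' (residue-ring quantifiers may be pushed inward so as to scope only over $\Phi$-atoms, since value group atoms contain no residue-ring variables). Putting the Boolean combination into disjunctive normal form and collecting, within each disjunct, all $\Theta$-atoms into a single $\Theta$ and all $\Phi$-atoms into a single $\Phi$ (both classes being closed under negation) yields a finite disjunction of formulas of the shape (\ref{varphiy2}); the case of $\gTPas'\cup\Tdoag$ is identical, reading $\LPres$ as the language of divisible ordered abelian groups, in which the congruences $\equiv_d$ are vacuous. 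I expect no deep obstacle, since the real content is in Theorem~\ref{QEZ}; the step demanding the most care is the $+\infty$ bookkeeping above, where one must check that the case distinctions on $p_i(x)=0$ can be carried out while staying inside the allowed form (they can, being $\ac_n$-conditions, hence absorbed into $\Phi$) and that in each branch the remaining value group terms and the arguments of $\cross_n$ really are $\LPres$-terms over $\VG$ rather than expressions involving $+\infty$.
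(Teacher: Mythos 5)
Your proposal is correct and follows exactly the route the paper intends: the paper gives no detailed argument, merely stating that Theorem~\ref{weakortho} follows from the quantifier elimination of Theorem~\ref{QEZ} ``as in the usual Denef--Pas QE setting'', and your write-up is a careful implementation of precisely that syntactic normalization (including the genuinely needed identity $\res_n(y)=\ac_n(y)\cdot\cross_n(\ord y)$ and the $p_i(x)=0$ bookkeeping).
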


\begin{cor}\label{stab-emb}
Given a definable set $X\subset \VF^n \times Z$, where $Z$ is a product of residue ring sorts, there exists
a definable set $X' \subset Y' \times Z$, where $Y'$ is also a product of residue ring sorts, and a definable map $f\colon \VF^n \to Y'$ such
that for any $y \in \VF^n$, we have $X_y = X'_{f(y)}$.

The same is true if we allow both $Z$ and $Y'$ to be a product of residue ring sorts and copies of the value group, and all of this holds
both in $\gTPas'\cup \TPres$ and $\gTPas'\cup \Tdoag$.
\end{cor}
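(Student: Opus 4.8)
The plan is to derive this from the weak orthogonality statement of Theorem~\ref{weakortho}, exactly as stable embeddedness of the residue field is derived from orthogonality in the classical Denef--Pas situation. Let $\varphi$ be an $\gLPas'$-formula defining $X$ (recall that $\gLPas'$ is a definitional expansion of $\gLPas$), with valued-field variables $y$ running over $\VF^n$, residue-ring variables $\xi$, and, in the second version of the statement, value-group variables $z$ for the $\VG$-factors of $Z$. Applying Theorem~\ref{weakortho} with $y$ in the role of the valued-field variables, I get, in $\gTPas'\cup\TPres$ and equally in $\gTPas'\cup\Tdoag$, an equivalence
\[
\varphi(y,\xi,z)\ \longleftrightarrow\ \bigvee_{d=1}^{D} \Theta_d\big(z,(\ord p_i(y))_i\big) \wedge \Phi_d\big(\xi,(\ac_n(p_i(y)))_i,(\cross_n(t_{d,j}((\ord p_i(y))_i,z)))_j\big),
\]
with polynomials $p_i$, $\LPres$-terms $t_{d,j}$ (which have nonnegative coefficients), quantifier-free $\LPres$-formulas $\Theta_d$, and formulas $\Phi_d$ on the residue ring sorts. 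The point to exploit is that the only data about $y$ entering the right-hand side are the finitely many values $\ord p_i(y)\in\VGinf$ and $\ac_n(p_i(y))\in\Res_n$; once these are fixed, the right-hand side is a condition on $(\xi,z)$ involving only residue rings and the value group. So it remains to record this data as $f(y)$ for a suitable product $Y'$ of sorts and to let $X'$ be defined by the right-hand side with these data promoted to free variables.

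\emph{First version} ($Z,Y'$ products of residue ring sorts; no $z$.) Since $z$ is absent, each $\Theta_d\big((\ord p_i(y))_i\big)$ is, for fixed $y$, merely a truth value. I would let $\epsilon_d\colon\VF^n\to\Res_2$ be the characteristic function (valued in $\{0,1\}\subset\Res_2$) of the definable set $\{y:\Theta_d((\ord p_i(y))_i)\}$, let $a_i\colon\VF^n\to\Res_n$ be $y\mapsto\ac_n(p_i(y))$, and let $c_{d,j}\colon\VF^n\to\Res_n$ be $y\mapsto\cross_n(t_{d,j}((\ord p_i(y))_i))$; all three are definable functions (the $\LPres$-terms $t_{d,j}$ take values in $\VGinf$ for every $y$, so $\cross_n$ of them is everywhere defined). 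Take $Y'$ to be the product of the sorts $\Res_2$ and $\Res_n$ so occurring, set $f:=(\epsilon_d,a_i,c_{d,j})$, and let $X'\subset Y'\times Z$ be defined by the formula obtained from the disjunction above by replacing each $\Theta_d$ with ``$\epsilon_d=1$'', each $\ac_n(p_i(y))$ with the corresponding $\Res_n$-variable, and each $\cross_n(t_{d,j}(\dots))$ with the corresponding $\Res_n$-variable. Then $X'$ is a definable set and, by construction, $(y,\xi)\in X$ if and only if $(f(y),\xi)\in X'$; i.e.\ $X_y=X'_{f(y)}$ for every $y$.

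\emph{Second version} ($Z,Y'$ products of residue ring sorts and copies of $\VG$; $z$ present.) Now $\Theta_d$ and the $\cross_n$-terms genuinely depend on $z$, so instead of booleans I feed the tuple $(\ord p_i(y))_i$ itself into $f(y)$; this is allowed since $Y'$ may contain copies of $\VG$, but I first record which $p_i(y)$ vanish so as to land in $\VG$ rather than $\VGinf$. Let $\delta_i\colon\VF^n\to\Res_2$ be the characteristic function of $\{y:p_i(y)=0\}$, let $a_i\colon\VF^n\to\Res_n$ be $y\mapsto\ac_n(p_i(y))$ (which is $0$ when $p_i(y)=0$), and let $\gamma_i\colon\VF^n\to\VG$ send $y$ to $\ord p_i(y)$ where $p_i(y)\neq 0$ and to $0$ otherwise; all are definable. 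With $Y':=\VG^{\ell}\times\prod_{i=1}^{\ell}\Res_n\times\Res_2^{\ell}$ and $f:=((\gamma_i)_i,(a_i)_i,(\delta_i)_i)$, let $X'\subset Y'\times Z$ be defined by the disjunction in which, for each $i$, one substitutes the constant $+\infty$ for $\ord p_i(y)$ and $0$ for $\ac_n(p_i(y))$ on the locus ``$\delta_i=1$'' and $\gamma_i$ for $\ord p_i(y)$ and $a_i$ for $\ac_n(p_i(y))$ on the locus ``$\delta_i=0$'', keeping the $\Theta_d(z,\cdot)$ and the terms $\cross_n(t_{d,j}(\cdot,z))$ as $\gLPas'$-formulas and terms in the new variables. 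Again $X_y=X'_{f(y)}$, and since Theorem~\ref{weakortho} was used for both $\gTPas'\cup\TPres$ and $\gTPas'\cup\Tdoag$, the conclusion holds for both.

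I expect no genuine obstacle here: as in the classical setting, the corollary should be a formal consequence of weak orthogonality, and the only points needing care are the bookkeeping — encoding the Presburger truth values $\Theta_d$ as residue-ring elements in the first version, excluding the value $+\infty$ before mapping into $\VG$ in the second version, and amalgamating the finitely many disjuncts (and, in the second version, the finitely many vanishing patterns of the $p_i$) into a single pair $(f,X')$.
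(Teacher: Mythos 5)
Your proof is correct and follows exactly the route the paper intends: the paper gives no explicit argument for Corollary~\ref{stab-emb} beyond saying it follows ``in the usual way'' from Theorem~\ref{weakortho}, and your write-up is precisely that standard derivation — record the finitely many $y$-dependent quantities $\ord p_i(y)$ and $\ac_n(p_i(y))$ (with the Presburger truth values, resp.\ the vanishing pattern of the $p_i$, encoded in a residue ring so as to land in $Y'$) as the map $f$, and promote them to free variables to get $X'$. The only blemish is the unused parenthetical claim that the $\LPres$-terms $t_{d,j}$ have nonnegative coefficients, which is not true in general (the language has subtraction) but plays no role in your argument.
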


Note that in contrast to the boundedly ramified setting, we do not know whether also the value group by itself is stably embedded.

\begin{remark}
For both the theories, statements similar to Theorem~\ref{weakortho} and Corollary~\ref{stab-emb} hold in a resplendent form relatively to the sorts $\Res_n$, namely with an expansion of $\gLPas'$-formula which only enriches the sorts $\Res_n$.
\end{remark}

\subsection{Understanding the value group using reparameterization}\label{sec:presburgering}

From now on, and until the end of the paper, we work with an $\gLPas$-theory $\cT$ containing the theory $\gTPas$ introduced in Section \ref{ss:gendp}.
(Note that we do not assume $\cT$ to be complete. Also note that certain expansions of $\gLPas$ as explained in Remark \ref{rem:expansions} can also be used here and until the rest of the paper.) By a $\cT$-definable set associated to a $\gLPas$-formula $\varphi$ we mean (as is common) the information consisting of $\varphi(\cK)$
for every model $\cK$ of $\cT$.

We deduce various results about definable sets and maps in the value group; we start with some preliminary definitions.

\begin{defn}\label{param}
By a \emph{reparameterization} of a $\cT$-definable set $X$ is meant a $\cT$-definable bijection
$\sigma:X\to X_{\rm par} \subset \prod_{i=1}^k \Res_{n_i} \times X$ over $X$ onto a set, often denoted by $X_{\rm par}$, for some $n_i$ and some $k$.
For a $\cT$-definable function $f$ on $X$, we write $f_{\rm par}$ for the composition of $f$ with $\sigma^{-1}$.
\end{defn}

\begin{defn}
Given $\cT$-definable sets $Y$ and $X\subset Y\times \VG^m$, we call a map $f\colon X \to \VG$ \emph{linear
over $Y$} if there exist rational numbers $r_0, r_1, \dots, r_m$ and a map $\gamma\colon Y \to \VG$ such that
\[
f(y, z_1, \dots, z_m) = \gamma(y) + r_0 + r_1 z_1 + \dots + r_m z_m
\]
for every $(y, z_1, \dots, z_m) \in X$. (Note that if $f$ is $\cT$-definable, then $\gamma$ is $\cT$-definable.) We call a map  $f\colon X \to \VG^n$ \emph{linear
over $Y$} if each of its coordinate functions is linear over $Y$.
\end{defn}

In $\cT$, the structure on the value group is not necessarily the pure ordered abelian group structure. However,
the following corollaries give structural results about $\cT$-definable sets in the value group under $\TPres$ and $\Tdoag$.


Note in the following corollary that $\gamma \times \id_{\VG^m}(X_{\rm par})$ is not assumed to be equal to $X'$.

\begin{cor}\label{linear}
Suppose that $\cT$ contains either $\TPres$ or $\Tdoag$ on the value group.
Let $Y$ and $X\subset Y\times \VG^m$ be $\cT$-definable. Then there exist
\[
\begin{array}{ccccc}
Y \times \VG^m  & \xrightarrow{\ \ \sigma\ \ }  &   \Res_n^n\times Y\times \VG^m & \xrightarrow{\ \gamma \times \id_{\VG^m}\ } & (\VGinf)^k \times \VG^m\\
   \cup & & \cup & & \cup \\
 X & \hskip-5em\rlap{$\xrightarrow{\hskip5em}$} & X_{\rm par} & \hskip-8em\rlap{$\xrightarrow{\hskip8em}$} & X',
\end{array}
\]
where $\sigma$ is reparameterization, $X_{\rm par} = \sigma(X)$,
$\gamma\colon\Res_n^n\times Y \to (\VGinf)^k$ is a $\cT$-definable map
for some $k \ge 0$,
$X'$ is $\cL_{\rm oag}$-definable, and for every $z \in \Res_n^n\times Y$, we have $X_{{\rm par},z} = X'_{\gamma(z)}$.

If we additionally are given finitely many $\cT$-definable functions $f_1, \dots, f_\ell:X\to \VG$,
then we may moreover achieve that there exists
a finite $\cL_{\rm oag}$-definable partition of $X'$ such that for each part $A'$ and each $i$, the restriction of $f_{i,\rm par} = f_i \circ \sigma^{-1}$ to
$(\gamma \times \id_{\VG^m})^{-1}(A')$ is linear over $\Res_n^n\times Y$.
\end{cor}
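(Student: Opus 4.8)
\emph{Approach.} The plan is to derive the statement from the weak orthogonality normal form of Theorem~\ref{weakortho}, by \emph{reparameterizing $X$ along the residue-ring-valued terms $\cross_n(t_j)$ that appear in that normal form}. I treat the case $\cT\supseteq\gTPas'\cup\TPres$; the case $\cT\supseteq\gTPas'\cup\Tdoag$ is identical, all congruence conditions being vacuous there. Write the $Y$-variable as $y=(x,\xi,\nu)$ split into its valued-field, residue-ring and value-group parts, and let $w$ run over $\VG^m$. Apply Theorem~\ref{weakortho} to a formula defining $X$, taking $(\nu,w)$ as the tuple of value-group variables; after finitely many case distinctions according to which occurring polynomials $p_i$ satisfy $p_i(x)=0$ (so that $\ord p_i(x)=+\infty$ and the corresponding $\ac_n$- and $\cross_n$-values vanish) we may assume the defining formula is a finite disjunction, over an index $a$, of formulas
\[
\Theta_a\big(\nu, w, (\ord p_i(x))_i\big)\ \wedge\ \Phi_a\big(\xi,\ (\ac_n(p_i(x)))_i,\ (\cross_n(t_{a,j}((\ord p_i(x))_i,\nu,w)))_j\big),
\]
with $\Theta_a$ a quantifier-free $\LPres$-formula and $\Phi_a$ a formula living purely on the residue rings. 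Since each $t_{a,j}$ is an $\LPres$-term, $t_{a,j}=L_{a,j}(w)+A_{a,j}$ where $L_{a,j}$ is a linear form in $w$ with integer coefficients and $A_{a,j}=A_{a,j}((\ord p_i(x))_i,\nu)$ is a $\cT$-definable $\VGinf$-valued function of $y$. The key structural point is that $w$ enters the residue-ring part $\Phi_a$ \emph{only} through the terms $\cross_n(t_{a,j})$.

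\emph{Main construction.} I reparameterize by exactly these terms: take $\sigma(y,w):=\big((\cross_n(t_{a,j}((\ord p_i(x))_i,\nu,w)))_{a,j},\,y,\,w\big)$, which is a $\cT$-definable bijection of $Y\times\VG^m$ onto its image since it is the graph of a function, and set $X_{\rm par}:=\sigma(X)$. Fix a parameter $\bar z=(\eta,y)$ with $\eta=(\eta_{a,j})_{a,j}$. Then $X_{{\rm par},\bar z}\subset\VG^m$ is the set of $w$ satisfying $\bigwedge_{a,j}\big(\cross_n(t_{a,j}(\dots,w))=\eta_{a,j}\big)$ together with $\bigvee_a\big(\Theta_a(\nu,w,\dots)\wedge\Phi_a(\xi,\dots,(\eta_{a,j})_j)\big)$. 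On this fibre the $\Phi_a$-part no longer depends on $w$: it is a truth value, i.e.\ a $\cT$-definable condition on $\bar z$, which I record as a $\{0,1\}$-valued coordinate of a map $\gamma$. Moreover each constraint $\cross_n(t_{a,j}(\dots,w))=\eta_{a,j}$ is, as a condition on $w$, an $\cL_{\rm oag}$-condition with value-group parameters read off from $\bar z$: if $\eta_{a,j}=0$ it reads $L_{a,j}(w)+A_{a,j}(\bar z)<0\ \vee\ L_{a,j}(w)+A_{a,j}(\bar z)>\ord(n)$, and if $\eta_{a,j}\neq 0$ it reads $L_{a,j}(w)+A_{a,j}(\bar z)=k_{a,j}(\eta_{a,j})$, where $k_{a,j}(\eta_{a,j})\in[0,\ord(n)]$ is the unique element with $\cross_n(k_{a,j}(\eta_{a,j}))=\eta_{a,j}$ --- a $\cT$-definable function of $\eta_{a,j}$, well defined precisely because $\eta_{a,j}$ lies in the image of $\cross_n$. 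Likewise $\Theta_a$ is $\LPres$ in $w$ with value-group parameters $\nu$ and $(\ord p_i(x))_i$. Collecting all these value-group data --- $\nu$, the $\ord p_i(x)$, the $A_{a,j}(\bar z)$, the $k_{a,j}(\eta_{a,j})$, the constants $\ord(n)$, and one truth-value coordinate per $a$ --- into a $\cT$-definable map $\gamma\colon \Res_n^n\times Y\to(\VGinf)^k$, and absorbing the constants $\ord(n)$ into coordinates of $\gamma$ (and using that $\equiv_d$ is $\cL_{\rm oag}$-definable under $\TPres$), the displayed description of $X_{{\rm par},\bar z}$ becomes $\chi(\gamma(\bar z),w)$ for a fixed $\cL_{\rm oag,\infty}$-formula $\chi$; letting $X'$ be the set $\chi$ defines and arranging $X'_{\gamma(\bar z)}=\emptyset$ for $\bar z$ outside the image of $\sigma$ gives $X_{{\rm par},\bar z}=X'_{\gamma(\bar z)}$ for all $\bar z$.

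\emph{The addendum.} Given $f_1,\dots,f_\ell\colon X\to\VG$, I apply the part just proved to $\tilde X:=\{(y,w,u_1,\dots,u_\ell):(y,w)\in X,\ u_i=f_i(y,w)\}\subset Y\times\VG^{m+\ell}$, obtaining $\sigma$, $\gamma$, and an $\cL_{\rm oag,\infty}$-definable $\tilde X'\subset(\VGinf)^{\tilde k}\times\VG^{m+\ell}$ with $\tilde X_{{\rm par},\bar z}=\tilde X'_{\gamma(\bar z)}$. On $\tilde X$ the $u_i$ are determined by $(y,w)$, so the new $\sigma$-coordinates restrict to functions of $(y,w)$; extending them by $0$ off $X$ yields a reparameterization of $Y\times\VG^m$, and I take $X'$ to be the image of $\tilde X'$ under the projection forgetting the last $\ell$ value-group coordinates. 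Since $\tilde X_{{\rm par},\bar z}$ is the graph of $w\mapsto(f_{i,\rm par}(\bar z,w))_i$, the set $\tilde X'$ is, over $X'$, the graph of a tuple of $\cL_{\rm oag,\infty}$-definable partial functions in $(w,\gamma(\bar z))$; by Presburger cell decomposition (resp.\ quantifier elimination in $\Tdoag$) such functions are piecewise $\QQ$-affine, so after a finite $\cL_{\rm oag,\infty}$-partition of $X'$ each $u_i$ equals, on each part, a $\QQ$-affine function of $w$ plus a term depending only on $\gamma(\bar z)$. Pulling this partition back along $\gamma\times\id_{\VG^m}$, the restriction of each $f_{i,\rm par}$ to the corresponding part is the sum of a $\QQ$-affine function of $w$ and a $\cT$-definable function of $\bar z$, hence linear over $\Res_n^n\times Y$.

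\emph{Main obstacle.} The genuinely substantive step is the realization that one should reparameterize precisely by the terms $\cross_n(t_{a,j})$: this simultaneously collapses each residue-ring formula $\Phi_a$ to a parameter-dependent truth value and turns the equalities $\cross_n(t_{a,j})=\eta_{a,j}$ into $\cL_{\rm oag}$-conditions on $w$ whose value-group parameters are $\cT$-definable in $\bar z$. Everything afterwards is careful bookkeeping: isolating the $w$-linear part of each $t_{a,j}$, checking $\cT$-definability of the $k_{a,j}(\eta_{a,j})$ and the $A_{a,j}$, absorbing the constants $\ord(n)$ into $\gamma$ so that $X'$ is literally $\cL_{\rm oag}$-definable, handling the finitely many cases where some $p_i(x)=0$, and combining the finitely many disjuncts $a$ and, for the addendum, the finitely many cells.
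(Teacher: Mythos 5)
Your proof is correct and follows essentially the same route as the paper's: both apply Theorem~\ref{weakortho}, reparameterize by the residue-ring values $\cross_n(t_j(\cdot))$ so that the residue-ring formula collapses to a parameter-dependent truth value and the remaining conditions become $\cL_{\rm oag}$-conditions whose value-group parameters ($\cross_n^{-1}$ of the new coordinates, the linear data, and $\ord(n)$) are packaged into $\gamma$, and for the addendum reparameterize along the graphs of the $f_i$ and invoke piecewise linearity of $\cL_{\rm oag}$-definable functions. The differences are only presentational (your explicit case splits on $p_i(x)=0$ and on $\eta_{a,j}=0$, and passing through the graph $\tilde X$ and a projection rather than directly adjoining the extra $\cross_n$-coordinates as the paper does).
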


\begin{proof}
By Theorem \ref{weakortho}, we may assume that $X$ is given by an $\gLPas'$-formula of the form (\ref{varphiy2}),
which in this context can be written as
\begin{equation}\label{form-linear}
\Theta(g(y), x) \wedge \Phi\big(h(y),\cross_{n}(t(g(y), x))\big) ,
\end{equation}
where $y$ runs over $Y$, $x$ runs over $\VG^m$, $\Theta$ an $\LPres$-formula, $\Phi$ is a formula on the residue ring sorts,
$t\colon \VG^{n+m} \to \VG^n$ is an $\cL_{\rm oag}$-definable function, and $g \colon Y \to \VG^n$ and $h \colon Y \to \RF_n^n$ are $\cT$-definable functions (for some $n$
which we may assume to be the same everywhere for simplicity). Here, $\cross_{n}$ is applied to a tuple by applying it to each coordinate individually.

We do a reparameterization $\sigma$ with new variables
\begin{equation}\label{repar-linear}
\zeta := \zeta(y,x) := \cross_{n}(t(g(y), x)).
\end{equation}
Then for $z = (\zeta, y) \in \RF_n^n \times Y$, we have
\[
X_{{\rm par}, z} = \begin{cases}
               \{x \mid \Theta(g(y), x) \wedge \cross_{n}(t(g(y), x)) = \zeta\} & \text{if $\Phi(h(y),\zeta)$ holds}\\
               \emptyset & \text{otherwise}.
              \end{cases}
\]

Now we define $\gamma\colon \RF_n^n \times Y \to (\VGinf)^{2n+1}$
\[
\gamma(\zeta, y) :=
\begin{cases}
(g(y),  \cross^{-1}_{n}(\zeta), \ord(n)) & \text{if } \Phi(h(y),\zeta) \text{ holds}\\
(\infty, \dots, \infty) & \text{otherwise.}
\end{cases}
\]
Here, if for some coordinate $\zeta_i$, $\cross_n^{-1}(\zeta_i)$ is not well-defined,
we use $\infty$ as preimage. Finally, we set
\begin{align*}
X' := \{&(u_1, u_2, u_3, x) \in \VGinf^n \times \VGinf^n \times \VGinf \times \VGinf^m \mid\\
&\Theta(u_1, x) \wedge \cross_n(t(u_1, x)) = \cross_n(u_2) \wedge u_3 \ne \infty
\}
\end{align*}
It is clear that $X_{{\rm par}, z} = X'_{\gamma(z)}$, and to define $X'$ entirely in $\cL_{\rm oag}$,
note that $\cross_n(a) = \cross_n(a')$ is expressible in $\cL_{\rm oag}$ using $u_3 = \ord(n)$.

To obtain the second part, we also define the graph of each $f_i$ by a formula as in (\ref{form-linear}), namely
\begin{equation}\label{form-linear2}
\Theta(g_i(y), x, x') \wedge \Phi\big(h_i(y),\cross_{n}(t_i(g_i(y), x, x'))\big) ,
\end{equation}
where $x$ still ranges over $\VG^m$ and $x'$ ranges over $\VG$.

This time, we reparameterize $X$ not only using (\ref{repar-linear}), but in addition using
\[
\zeta_i := \zeta_i(y,x) :=\cross_{n}(t_i(g_i(y), x, f_i(y, x))),
\]
and in the definition of $\gamma$, we insert additional coordinates
\[
\cross^{-1}_{n}(\zeta_1), \dots,  \cross^{-1}_{n}(\zeta_\ell).
\]
With these definitions, we obtain that $f_{i,\rm par} := f_i \circ \sigma^{-1}$ is equal to $f'_i \circ (\gamma \times \id_{\VG^m})$
for some $\cL_{\rm oag}$-definable $f'_i\colon X' \to \VG$. Using that $\cL_{\rm oag}$-definable functions are piecewise linear, choose the partition of $X'$ in such a way that
each $f'_i$ is linear over $(\VGinf)^k$ on each part $A'$.
\end{proof}

%

The following corollary describes definable functions from residue rings into the value group:
Piecewise, such functions range over an interval whose length is bounded by $\ord(n)$ for some integer $n \ge 1$.

\begin{cor}\label{RF-to-VG}
Suppose that $\cT$ contains either $\TPres$  or $\Tdoag$ on the value group.
For any $\cT$-definable function $f\colon Y \times \RF_m^m \to \VG$, where $Y$ is an arbitrary $\cT$-definable set,
there exists an integer $n \ge 1$ and finitely many $\cT$-definable functions $g_i\colon Y \to \VG$ such that
for every $(y, \xi) \in Y \times \RF_m^m$, one has
\begin{equation}\label{eq-RF-to-VG}
g_i(y) \le f(y, \xi) \le g_i(y) + \ord(n)
\end{equation}
for some $i$.
\end{cor}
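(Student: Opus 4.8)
The plan is to reduce this statement to Corollary \ref{linear} by viewing $f$ as a definable set rather than a definable function, and then exploiting the structural description of definable sets in the value group that Corollary \ref{linear} provides. Concretely, set $X := \{(y,\xi,z) \in Y \times \RF_m^m \times \VG \mid z = f(y,\xi)\}$, the graph of $f$, viewed as a $\cT$-definable subset of $(Y\times \RF_m^m)\times \VG^1$; here the role of ``$Y$'' in Corollary \ref{linear} is played by $Y\times \RF_m^m$, and $m$ (the number of $\VG$-variables, in the notation of that corollary) is $1$. Apply Corollary \ref{linear} to $X$, also feeding in the single $\cT$-definable function $f_1 := f$ (regarded as a function $X \to \VG$, the projection to the last coordinate, which is indeed linear over the base only after the reparameterization). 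This yields a reparameterization $\sigma$ with new $\RF_n$-variables, a $\cT$-definable map $\gamma$ into $(\VGinf)^k$, an $\cL_{\rm oag}$-definable $X'$ with $X_{{\rm par},z} = X'_{\gamma(z)}$ for all $z$, and a finite $\cL_{\rm oag}$-definable partition of $X'$ on each part of which $f_{\rm par}$ is linear over the residue-ring base.

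The second step is to read off the desired bound from the linearity. For a fixed value of the new $\RF_n$-parameters and of $y$ (equivalently, a fixed $z$ in the base $\RF_n^n \times (Y\times\RF_m^m)$), the fibre $X_{{\rm par},z}$ is the graph of the function $\xi' \mapsto f_{\rm par}(z)$ restricted appropriately; but since $f$ has no $\VG$-variables in its domain, after reparameterization the value $f_{\rm par}$ depends only on $z$, i.e.\ it is a single element of $\VG$ for each $z$. Writing $f_{\rm par} = f' \circ (\gamma\times\id)$ with $f'$ piecewise $\cL_{\rm oag}$-linear (as in the last sentence of Corollary \ref{linear}), on each part $A'$ of the partition $f'$ has the form $u \mapsto (\text{rational combination of the coordinates of } u) + (\text{constant})$. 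The only ``$\VG$-type'' coordinates occurring among the inputs $u = \gamma(\zeta,y) = (g(y), \cross_n^{-1}(\zeta), \ord(n))$ are $g(y)$ and $\ord(n)$; the coordinates coming from $\cross_n^{-1}(\zeta)$ lie in the fixed segment $[0,\ord(n)]$. Hence $f_{\rm par}$, as a function of $y$ and the residue-ring parameters, differs from a $\cT$-definable function of $y$ alone (namely an $\cL_{\rm oag}$-linear combination of the coordinates of $g(y)$, plus a multiple of $\ord(n)$) by a quantity lying in a bounded interval of length at most $C\cdot\ord(n)$ for some integer $C$ depending only on the (finitely many) denominators and coefficients appearing; enlarging $n$ to absorb $C$, this gives, on each part, functions $g_i$ with $g_i(y) \le f_{\rm par} \le g_i(y) + \ord(n)$.

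Finally, one transports this back through $\sigma$: since $\sigma\colon X \to X_{\rm par}$ is a bijection over $Y\times\RF_m^m$, for each $(y,\xi)$ there is a unique choice of the new $\RF_n$-parameters $\zeta = \zeta(y,\xi)$, hence a unique part $A'$ of the partition, and correspondingly a single $g_i$ among the finitely many produced; this is exactly the assertion that for every $(y,\xi)$, inequality \eqref{eq-RF-to-VG} holds for \emph{some} $i$. The main obstacle I anticipate is the bookkeeping in the second step: one must be careful that after the reparameterization of Corollary \ref{linear} the ``value'' $f_{\rm par}$ really is constant along the (now absent, since $m=1$ and the single $\VG$-coordinate is pinned to equal $f$) fibre directions, i.e.\ that no genuine $\VG$-variable survives in the domain of $f_{\rm par}$, and that the inputs of the $\cL_{\rm oag}$-linear pieces that are \emph{not} of the form $g(y)$ are confined to a segment of length $\ord(n)$ — this last point is precisely the content of how $\gamma$ is built from $\cross_n^{-1}$ in the proof of Corollary \ref{linear}, so no new idea is needed, only care.
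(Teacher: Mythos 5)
Your overall strategy — apply the machinery to the \emph{graph} of $f$ and read off the bound from the resulting piecewise-linear description — is viable, and it is in fact closely parallel to the paper's own proof, which applies Theorem~\ref{weakortho} directly to the graph of $f$: there the graph is a disjunction of clauses $\Theta(z,s(y))\wedge\Phi(\xi,s'(y),(\cross_n(s_j(y)+b_jz))_j)$, and the bound comes from the dichotomy that $z$ is either pinned down by the Presburger part $\Theta$ (which does not see $\xi$) or constrained by $0\le s_j(y)+b_jz\le\ord(n)$. Your detour through Corollary~\ref{linear} packages the same information, but two points need repair. First, feeding $f_1:=f$ (i.e.\ the projection of the graph to its last coordinate) into the second part of Corollary~\ref{linear} buys you nothing: that projection is \emph{trivially} linear over the base (coefficient $1$ on the $\VG$-variable, zero elsewhere), before and after reparameterization, so the ``linear over $\Res_n^n\times Y$'' conclusion is vacuous here. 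The information you actually use is the first part — that the singleton fibre $X_{\mathrm{par},z}=X'_{\gamma(z)}$ with $X'$ $\cL_{\rm oag}$-definable forces $f(y,\xi)$ to be a piecewise $\QQ$-linear function of $\gamma(z)$ — and you should say so.

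Second, and this is the crux: the statement of Corollary~\ref{linear} only tells you that $\gamma$ is \emph{some} $\cT$-definable map on $\Res_n^n\times(Y\times\RF_m^m)$. A priori its coordinates could depend on $\xi\in\RF_m^m$ in an arbitrary $\VG$-valued way — in the worst case $\gamma$ could simply encode $f(y,\xi)$ itself, and your argument would be circular. What saves you is a fact visible only in the \emph{proof} of Corollary~\ref{linear} (equivalently, in Theorem~\ref{weakortho}): the unbounded coordinates $g$ of $\gamma$ are built from the $\VG$-coordinates and the $\ord p_i$ of the $\VF$-coordinates of the base, while all residue-ring variables of the base — in particular $\xi$ — are shunted into $\Phi$ and hence into $h$, never into $g$ or into the arguments of the $\cross_n(t_j(\cdot))$. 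You silently write $g(y)$ rather than $g(y,\xi)$, which is exactly the assertion that needs to be made and justified; once it is, the remaining bookkeeping (the $\cross_n^{-1}(\zeta)$ coordinates lie in $[0,\ord(n)]\cup\{\infty\}$, rational coefficients are absorbed by enlarging $n$ and rounding the $g_i$ to $\VG$-valued definable functions) goes through as you describe. So: the approach works, but only because you are really re-running the weak-orthogonality argument that the paper applies directly; I would either make the dependence of $\gamma$ on $\xi$ explicit, or bypass Corollary~\ref{linear} and argue from Theorem~\ref{weakortho} as the paper does.
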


\begin{proof}
Let the graph of $f$ be given by an $\gLPas'$-formula $\varphi(y,\xi,z)$, with $y\in Y$, $\xi\in \RF_m^m$ and $z\in \VG$. We assume that
$\varphi$ is of the form given by Theorem~\ref{weakortho}, which means in our context that it is a disjunction of formulas of the form
\begin{equation}\label{woapp}
\Theta\big(z, s(y) \big) \wedge \Phi\big(\xi,s'(y),(\cross_{n}(s_j(y) + b_j z) )_j\big) ,
\end{equation}
where $\Theta$ lives in the value group, $\Phi$ lives in the residue rings, the $b_j$ are integers, and
$s$, $s'$ and $s_j$ are $\cT$-definable functions from $Y$ to $\VG^N$, $\RF_M^M$ and to $\VG$, respectively.
That this defines the graph of a function implies that for each $y$ and $\xi$, there exists one clause of the form
(\ref{woapp}) which holds for exactly one $z$. If all $b_j$ are zero, or, if $s_j(y) + b_j z$ lies outside $[0,\ord n]$ when $z=f(y, \xi)$, then the Corollary follows from (\ref{woapp}), even with $n=1$. Otherwise, if $b_j \ne 0$ for some $j$ and $0 \le s_j(y) + b_j z \le \ord(n)$  when $z=f(y, \xi)$,
then an inequality of the form (\ref{eq-RF-to-VG}) follows as desired.
\end{proof}

By combining Corollaries \ref{linear} and \ref{RF-to-VG}, we obtain that even without reparameterization, definable functions
in the value group are piecewise ``approximatively linear'':

\begin{cor}\label{app-lin}
Suppose that $\cT$ contains either $\TPres$  or $\Tdoag$ on the value group.
Let $Y$ and $X\subset Y\times \VG^m$ be $\cT$-definable, and let
$$
f:X\to \VG
$$ be a $\cT$-definable function. Then there exists an integer $n \ge 1$, a finite partition of $X$ into parts $A$,
and for each part $A$ a map $g\colon X \to \VG$ which is linear over $Y$
such that $0 \le f(x) - g(x) \le \ord(n)$ for all $x \in A$.
\end{cor}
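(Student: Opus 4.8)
The plan is to combine Corollaries~\ref{linear} and \ref{RF-to-VG}: the first makes $f$ piecewise linear at the cost of a reparameterization by residue rings, and the second lets us absorb the resulting dependence on the residue-ring variables into a bounded error term, which undoes the reparameterization.

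First I would apply Corollary~\ref{linear} to $X\subset Y\times\VG^m$ together with the single function $f$. This yields an integer $n\ge 1$, a reparameterization $\sigma\colon X\to X_{\rm par}\subset \Res_n^n\times Y\times\VG^m$ over $X$, a $\cT$-definable map $\gamma\colon \Res_n^n\times Y\to(\VGinf)^k$, an $\cL_{\rm oag}$-definable set $X'$ with $X_{{\rm par},z}=X'_{\gamma(z)}$ for all $z$, and a finite $\cL_{\rm oag}$-definable partition of $X'$ into parts $A'$ such that on each $(\gamma\times\id_{\VG^m})^{-1}(A')$ the function $f_{\rm par}=f\circ\sigma^{-1}$ is linear over $\Res_n^n\times Y$; explicitly, there are rationals $r^{A'}_0,\ldots,r^{A'}_m$ and a $\cT$-definable $\delta_{A'}\colon\Res_n^n\times Y\to\VG$ with $f_{\rm par}(\zeta,y,z)=\delta_{A'}(\zeta,y)+r^{A'}_0+\sum_{i=1}^m r^{A'}_iz_i$ on that set. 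From $X_{{\rm par},z}=X'_{\gamma(z)}$ one gets $X_{\rm par}=(\gamma\times\id_{\VG^m})^{-1}(X')$, so the partition of $X'$ pulls back along $\gamma\times\id_{\VG^m}$ and then along the bijection $\sigma$ to a finite $\cT$-definable partition of $X$. Since $\sigma$ is a bijection \emph{over} $X$, writing $\sigma(x)=(\zeta(x),y,z)$ for $x=(y,z)\in X$ exhibits $\zeta(x)\in\Res_n^n$ as a $\cT$-definable function of $x$, and $f(x)=\delta_{A'}(\zeta(x),y)+r^{A'}_0+\sum_i r^{A'}_iz_i$ on the part of $X$ lying over $A'$.

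Next I would apply Corollary~\ref{RF-to-VG}, for each $A'$, to the $\cT$-definable function $\delta_{A'}\colon Y\times\Res_n^n\to\VG$. This provides an integer $n_{A'}\ge 1$ and finitely many $\cT$-definable functions $g^{A'}_1,\ldots,g^{A'}_{s_{A'}}\colon Y\to\VG$ such that for every $(\zeta,y)$ one has $g^{A'}_j(y)\le\delta_{A'}(\zeta,y)\le g^{A'}_j(y)+\ord(n_{A'})$ for some $j$. Refining the part of $X$ over $A'$ according to the least such $j$ for $(\zeta(x),y)$ gives a finite $\cT$-definable partition; on each of its pieces I set $g(y,z):=g^{A'}_j(y)+r^{A'}_0+\sum_{i=1}^m r^{A'}_iz_i$, which is linear over $Y$ (the residue-ring variables having disappeared), and then $f(x)-g(x)=\delta_{A'}(\zeta(x),y)-g^{A'}_j(y)\in[0,\ord(n_{A'})]$ on that piece. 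Finally I would put $n:=\prod_{A'}n_{A'}$, so that $n_{A'}\mid n$ and hence $\ord(n_{A'})\le\ord(n)$, giving the uniform bound $0\le f(x)-g(x)\le\ord(n)$ over the whole (finite, $\cT$-definable) partition.

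Given the two corollaries, the argument is mostly bookkeeping; the point needing a little care is that $\sigma$ is a bijection \emph{over} $X$, which is exactly what guarantees that the reparameterizing coordinates $\zeta$ are themselves $\cT$-definable functions of $x$ and therefore do not appear in the final formula for $g$, leaving it linear over $Y$ rather than merely over $\Res_n^n\times Y$. (One should also note that $g$, given by that linear formula with rational coefficients, is $\VG$-valued at least on the piece under consideration, which is all that is required.)
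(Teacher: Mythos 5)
Your proof is correct and follows essentially the same route as the paper's: apply Corollary~\ref{linear} to make $f_{\rm par}$ piecewise linear over $\Res_n^n\times Y$, then Corollary~\ref{RF-to-VG} to trade the dependence of the constant term on the reparameterizing residue-ring variables for an error bounded by $\ord(n)$, and partition by the least applicable index. Your write-up merely makes explicit the bookkeeping (the decomposition of $f_{\rm par}$ into $\delta_{A'}$ plus a linear part, and taking $n=\prod_{A'} n_{A'}$) that the paper's two-line proof leaves implicit.
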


\begin{proof}
Apply Corollary \ref{linear} to $X$ and $f$
and then Corollary~\ref{RF-to-VG} to the function $f_{\rm par} = f \circ \sigma^{-1} \colon X_{\rm par} \to \VG$.
This yields an $n$ and finitely many functions $g_i\colon X \to \VG$ such that
for each $x \in X$, we have
\begin{equation}\label{ali}
g_i(x) \le f_{\rm par}(\sigma(x)) = f(x) \le g_i(x) + \ord(n)
\end{equation}
for some $i$. Partition $X$ according to the smallest $i$ for which (\ref{ali}) holds (for any ordering of the index set).
\end{proof}

The following result is specific to the Presburger group situation and goes back to the parametric rectilinearization result of \cite{CPres}.
\begin{prop}[Rectilinearization]\label{prop:recti}
Suppose that $\cT$ contains $\TPres$ on the value group.
Let $Y$ and $X\subset Y\times \VG^m$ be $\cT$-definable sets. Then there exist a finite partition of $X$ into
$\cT$-definable sets $A$ and for each part $A$ a reparameterization
$$
\sigma:A\to A_{\rm par}\subset \Res_n^n\times A,
$$
a set $B\subset \Res_n^n\times Y\times \VG^{m}$,
and a $\cT$-definable map $\rho:A_{\rm par}\to \VG^m$ which is linear over $\Res_n^n\times Y$ such that the following holds.

For each $y \in \Res_n^n\times Y$, the map $\rho(y, \cdot)$ is a bijection
from $A_{{\rm par},y}$ to $B_y$, and the set
$B_y$ is of the form
$\Lambda_y\times \NN^\ell$ for a bounded Presburger definable subset $\Lambda_y\subset \NN^{m-\ell}$.
Here, $\Lambda_y$ may depend on $y$, but the integer $\ell\geq 0$ only depends on $A$.
\end{prop}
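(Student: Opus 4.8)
The plan is to first use Corollary~\ref{linear} to trade the arbitrary $\cT$-structure on the value group for a reparameterization, reducing $X$ to an honestly Presburger-definable family over a product of value group sorts, and then to feed that family into the parametric rectilinearization result of \cite{CPres}.

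Concretely, I would first apply Corollary~\ref{linear} to $Y$ and $X\subset Y\times\VG^m$ (the version without the extra functions). This provides a reparameterization $\sigma\colon X\to X_{\rm par}\subset\Res_n^n\times X$, a $\cT$-definable map $\gamma\colon\Res_n^n\times Y\to(\VGinf)^k$, and an $\cL_{\rm oag}$-definable set $X'\subset(\VGinf)^k\times\VG^m$ with $X_{{\rm par},z}=X'_{\gamma(z)}$ for every $z\in\Res_n^n\times Y$. Since $X'$ is a family over $(\VGinf)^k$ rather than over $\VG^k$, I would then partition $\Res_n^n\times Y$ according to the $\infty$-pattern of $\gamma$, i.e.\ which of the $k$ coordinates of $\gamma(z)$ equal $+\infty$; this induces, via $\sigma$, a finite partition of $X$ into $\cT$-definable pieces, and it suffices to prove the statement on each piece. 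On such a piece the finite coordinates of $\gamma$ vary over $\VG^{k'}$ for some $k'\le k$, and the corresponding restriction of $X'$ is $\cL_{\rm oag}$-definable, hence Presburger-definable, in $\VG^{k'}\times\VG^m$, viewed as a family over the parameter space $\VG^{k'}$. So I may assume that $\gamma$ takes values in $\VG^{k'}$ and that $X'\subset\VG^{k'}\times\VG^m$ is a Presburger-definable family over $\VG^{k'}$.

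Next I would apply the parametric rectilinearization of \cite{CPres} to this Presburger family: it yields a finite partition of $X'$ into Presburger-definable pieces $A'$ and, for each piece, a Presburger-definable set $B'\subset\VG^{k'}\times\VG^m$ together with a Presburger-definable map $\rho'\colon A'\to\VG^m$ which is linear over $\VG^{k'}$, such that for each $u\in\VG^{k'}$ the map $\rho'(u,\cdot)$ is a bijection from $A'_u$ onto $B'_u=\Lambda'_u\times\NN^\ell$ with $\Lambda'_u\subset\NN^{m-\ell}$ bounded and Presburger-definable and $\ell$ depending only on $A'$. Transporting back: the pieces $A'$ pull back through $\gamma\times\id_{\VG^m}$ and $\sigma$ to a further finite partition of $X$ into $\cT$-definable parts $A$; on each part I keep the restriction of $\sigma$ as the reparameterization, set $\rho(z,x):=\rho'(\gamma(z),x)$ for $(z,x)\in A_{\rm par}$, and set $B_y:=B'_{\gamma(y)}=\Lambda'_{\gamma(y)}\times\NN^\ell$, so that $\Lambda_y:=\Lambda'_{\gamma(y)}$ is bounded and the family $B$ is $\cT$-definable via $\gamma$. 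Because $\rho'$ is linear over $\VG^{k'}$ and $\gamma$ is $\cT$-definable, $\rho$ is linear over $\Res_n^n\times Y$ in the sense of the definition above; and from $X_{{\rm par},z}=X'_{\gamma(z)}$ one reads off that $\rho(y,\cdot)$ is a bijection $A_{{\rm par},y}\to B_y$ of the required rectilinear form, with $\ell$ depending only on $A$.

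The main obstacle I anticipate is twofold. First, invoking the parametric rectilinearization of \cite{CPres} in exactly the parametrized shape needed here, with the bounded factor $\Lambda'_u$ and a rectifying map that is genuinely affine-linear over the parameter, uniformly on each piece (the non-parametric version, or a version over a single parameter, does not immediately suffice). Second, the $\VGinf$- versus $\VG$-bookkeeping produced by Corollary~\ref{linear}: the possibility that some coordinates of $\gamma$ take the value $+\infty$ is what forces the preliminary partition by $\infty$-patterns before one is allowed to apply a rectilinearization statement phrased only for Presburger families over copies of $\ZZ$.
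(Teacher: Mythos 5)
Your proposal is correct and follows essentially the same route as the paper: apply Corollary~\ref{linear} to replace $X$ by a Presburger-definable family $X'$ over a power of the value group, invoke the parametric rectilinearization (Theorem~3 of \cite{CPres}) for that family, and pull the resulting partition, $B'$ and $\rho'$ back through $\gamma\times\id_{\VG^m}$ and $\sigma$. Your extra preliminary partition according to the $\infty$-pattern of $\gamma$ is a sensible piece of bookkeeping that the paper leaves implicit.
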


In Proposition \ref{prop:recti}, by ``$\Lambda_y$ is Presburger definable'' we mean: There exists
a $\cT$-definable function $\gamma \colon Y \to \VG^k$ and a Presburger formula $\phi$ in $m-\ell+k$ free variables, such that for every $y$, $\phi(\VG, \gamma(y)) =\Lambda_y$.
Also, by being bounded for a  $\cT$-definable subset $S\subset \VG^n$ we mean that
$\exists r\in \VG : \forall s\in S\ \sum_{i=1}^n |s_i| < r$ holds.

\begin{proof}
The case where $Y$ lives in the value group and everything is Presburger-definable is Theorem~3 of
\cite{CPres}. (In that case, no reparameterization is necessary.)
Using Corollary~\ref{linear}, it is straight forward to reduce to that case:
We apply the corollary to $X$ and then (using the notation from that corollary) \cite[Theorem~3]{CPres} to
the resulting $X' \subset \VG^k \times \VG^m$. This yields a finite partition
of $X'$ into parts $A'$, and for each part $A'$,
a set $B' \subset \VG^k \times \VG^m$ and a $\cT$-definable map
$\rho:A' \to \VG^m$ such that for each $y' \in \VG^k$,
$\rho(y', \cdot)\colon A'_{y'} \to B'_{y'}$ is a bijection
and $B_{y'}$ is of the desired form.

Pulling $A'$, $B'$ and $\rho$ back via the map
\[
\gamma \times \operatorname{id}\colon \pi(X_{\rm par}) \times \VG^m \to \VG^k \times \VG^m
\]
yields a partition of $X_{\rm par}$ into pieces $A_{\rm par}$ with the desired properties,
and $\sigma^{-1}(A_{\rm par})$ yields the desired partition of $X$.
\end{proof}

\subsection{Cell decomposition and the Jacobian property}

Here we recall and adapt some terminology regarding cells and the Jacobian property. Theorem~\ref{cd} follows directly from results of \cite{CLip}, without using
the above new quantifier elimination.
Recall that $\cT$ is any $\gLPas$-theory containing $\gTPas$ (or, more generally, with $\gLPas$ replaced by a language according to Remark \ref{rem:expansions}.

Let $Y$ be a $\cT$-definable set.
The graph of a $\cT$-definable function $Y\to \VF$ is called a presented $0$-cell over $Y$.
A presented $1$-cell over $Y$ is
a $\cT$-definable set $X\subset Y\times \VF$ of the form
$$
\{(y,t)\mid y\in Y,\ t\in \VF, \ord (t - c(y) )\in G_y,\ \ac_n(t-c(y))=\xi(y)\}
$$
for some $\cT$-definable functions $c:Y\to \VF$ (called center), $\xi:Y\to \Res_n^\times$, a nonempty definable set $G\subset Y\times \VG$ and $G_y\subset \VG$ its fiber over $y\in Y$, and an integer $n>0$ (called depth).
Here, $\Res_n^\times$ denotes the group of units in the ring $\Res_n$.

The cell decomposition below says that, after reparameterization, every definable set is a finite union of presented cells.

Let $n\geq 0$ be an integer.
Say that a $\cT$-definable function $f:X\subset Y\times \VF\to \VF$ with $X$ a presented $1$-cell over $Y$ has the $n$-Jacobian property over $Y$ if, for each $y\in Y$, $f(y,\cdot)$ is injective on $X_y$ and for each ball $B$ contained in $X_y$, one has that $f(y,\cdot)$ has a derivative $f'(y,\cdot)$ of constant valuation and constant $\ac_n$ on $B$, $f(y,\cdot)$ maps $B$ onto a ball and, for all $x_1,x_2\in B$ one has
$$
\ord \big( f(y,x_1) - f(y,x_2) \big) = \ord  \big( f'(y,\cdot)_{|x_1} (x_1 - x_2)  \big)
$$
and
$$
\ac_n \big( f(y,x_1) - f(y,x_2) \big) = \ac_n  \big( f'(y,\cdot)_{|x_1} (x_1 - x_2)  \big).
$$

\begin{thm}[Cell decomposition and Jacobian property]\label{cd}
Let $Y$ and $X\subset Y\times \VF$ be $\cT$-definable, and let
$
f_i:X\to \VF 
$
be $\cT$-definable functions for $i=1,\ldots,N$. Moreover, let an integer $n \ge 1$ be given. Then there exist $m\geq 0$ and a finite partition of $X$ into parts $A$, and for each part a reparameterization
$$
\sigma:A\to A_{\rm par}\subset \Res_m^m\times X
$$
onto a presented cell $A_{\rm par}$ over $\Res_m^m\times Y$ such that each $f_{i, \rm par}$ either factorizes through the projection to $\Res_m^m\times Y$, or, has the $n$-Jacobian property over $\Res_m^m\times Y$.
\end{thm}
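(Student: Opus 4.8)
The plan is to derive the theorem directly from the cell decomposition theorem and the Jacobian property for Henselian valued fields of characteristic zero established in \cite{CLip}, in the form that already tolerates arbitrary ramification by carrying along the residue rings $\Res_m$; in particular no appeal to the quantifier elimination of Section~\ref{sec:qe} will be needed. The first step is simply to check that this is legitimate: since $\cT$ contains $\gTPas$ (or one of the expansions allowed by Remark~\ref{rem:expansions}), every model of $\cT$ is a Henselian valued field of characteristic $0$ equipped with a compatible system of angular component maps, which is precisely the kind of structure to which the results of \cite{CLip} apply, and our notions of presented $0$- and $1$-cell, of center, depth and defining set $G$, and of the $n$-Jacobian property over a parameter set agree, up to notation, with the ones used there.

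Next I would reduce to handling a single function. Given a finite partition of $X$ together with reparameterizations $\sigma\colon A\to A_{\rm par}\subset\Res_m^m\times X$ onto presented cells over $\Res_m^m\times Y$ adapted to $f_1$, one reapplies the statement inside each such cell, now with $\Res_m^m\times Y$ as the new base and $f_2$ in place of $f_1$, and iterates through $f_3,\dots,f_N$, at each stage amalgamating the freshly introduced residue-ring coordinates with the previous ones; this amalgamation is harmless because finitely many residue rings $\prod_j\Res_{m_j}$ embed definably into a single $\Res_m^{\,\ell}$ with $m=\lcm_j m_j$, and because the data defining a presented cell (center, unit $\xi$, the set $G$) is allowed to depend on the extra coordinates. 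Alternatively one may quote the version of the cell decomposition theorem of \cite{CLip} that prepares finitely many functions simultaneously.

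It then remains to treat a single $f\colon X\subset Y\times\VF\to\VF$. Here I would first apply the cell decomposition theorem of \cite{CLip} to $X$ to obtain, after a reparameterization absorbed into $\Res_m^m$, a finite partition of $X$ into presented cells over $\Res_m^m\times Y$ on which $f_{\rm par}$ is prepared (on a $0$-cell the sole $\VF$-coordinate is a function of the base, so there $f_{\rm par}$ automatically factors through the projection, and the content is on $1$-cells); then, on each $1$-cell, I would invoke the Jacobian property theorem of \cite{CLip}, which after one more partition and reparameterization --- shrinking $G$ and refining the $\ac$-data, both absorbed into the cell data and into $m$ --- forces $f_{\rm par}$ either to factor through the projection to $\Res_m^m\times Y$ or to possess a derivative of locally constant valuation and $\ac_n$ satisfying the two displayed identities relating $\ord$ and $\ac_n$ of differences to those of the derivative times the increment, i.e.\ the $n$-Jacobian property over $\Res_m^m\times Y$. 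Should \cite{CLip} supply only the $1$-Jacobian property, the upgrade to the $n$-Jacobian property follows from a routine further subdivision of the balls inside each cell.

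I expect the only genuine work to be bookkeeping: confirming that the several reparameterizations (from the cell decomposition, from each $f_i$, and within the Jacobian-property step) can indeed be merged into one reparameterization onto a presented cell over $\Res_m^m\times Y$, and, on the citation side, confirming that the invoked statements of \cite{CLip} are the arbitrary-ramification versions and remain valid resplendently relative to the $\Res_n$-sorts and under the analytic enrichments covered by Remark~\ref{rem:expansions}; everything else is a translation of definitions.
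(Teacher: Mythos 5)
Your proposal is correct and follows essentially the same route as the paper: the theorem is obtained by quoting the cell decomposition and Jacobian property results of \cite{CLip} (the paper cites Theorem~6.3.7 and Remark~6.3.16 there), with no use of the new quantifier elimination. The only point your "agree up to notation" slightly understates is that the results of \cite{CLip} are phrased relative to the leading-term sorts $\RV_n$ rather than the residue rings, so the paper additionally invokes the resplendency of \cite{CLip} to put structure on $\RV_n\setminus\{0\}$ making it definably bijective with $\Res_n^\times\times\VG$, together with model-theoretic compactness, to carry out the translation.
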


\begin{proof}
This follows from the resplendent forms of the corresponding results relative to $\RV_n$-sorts of Section 6 of \cite{CLip}, namely Theorem 6.3.7 and Remark 6.3.16. To translate between the terminology of \cite{CLip}, \cite{CLb} and of this section, one uses model theoretic compactness. The resplendency aspect of \cite{CLip} is used to put extra structure on $\RV_n\setminus\{0\}$ so that it becomes in a definable way bijective with $\Res_n^\times \times\VG$.
\end{proof}

\bibliographystyle{amsplain}
\bibliography{anbib}

\end{document}